\theoremstyle{plain}
\newtheorem{theorem}{Theorem}[section]
\newtheorem{corollary}[theorem]{Corollary}
\newtheorem{lemma}[theorem]{Lemma}
\newtheorem{proposition}[theorem]{Proposition}
\theoremstyle{definition}
\newtheorem{definition}[theorem]{Definition}
\newtheorem{example}[theorem]{Example}
\theoremstyle{remark}
\newtheorem{remark}[theorem]{Remark}
\numberwithin{equation}{section}
\def\Z{{\mathbb Z}}
\def\N{{\mathbb N}}
\def\P{{\mathbb P}}
\def\C{{\mathbb C}}
\def\Q{{\mathbb Q}}
\def\fka{{\mathfrak a}}
\def\fkb{{\mathfrak b}}
\def\fkg{{\mathfrak g}}
\def\fkh{{\mathfrak h}}
\def\fkl{{\mathfrak l}}
\def\fkn{{\mathfrak n}}
\def\fko{{\mathfrak o}}
\def\fkp{{\mathfrak p}}
\def\fks{{\mathfrak s}}
\def\fkz{{\mathfrak z}}
\def\a{{\boldsymbol a}}
\def\e{{\boldsymbol e}}
\def\m{{\boldsymbol m}}
\def\x{{\boldsymbol x}}
\def\y{{\boldsymbol y}}
\def\0{{\boldsymbol 0}}
\def\1{{\boldsymbol 1}}
\def\ad{{\rm ad}}
\def\Ad{{\rm Ad}}
\def\Gr{{\rm Gr}}
\def\diag{{\rm diag}}
\def\hite{{\rm ht}}
\newif\if@borderstar
\def\bordermatrix{\@ifnextchar*{%
 \@borderstartrue\@bordermatrix@i}{\@borderstarfalse\@bordermatrix@i*}%
}
\def\@bordermatrix@i*{\@ifnextchar[{\@bordermatrix@ii}{\@bordermatrix@ii[()]}}
\def\@bordermatrix@ii[#1]#2{%
\begingroup
 \m@th\@tempdima8.75\p@\setbox\z@\vbox{%
 \def\cr{\crcr\noalign{\kern 2\p@\global\let\cr\endline }}%
 \ialign {$##$\hfil\kern 2\p@\kern\@tempdima & \thinspace %
  \hfil $##$\hfil && \quad\hfil $##$\hfil\crcr\omit\strut %
  \hfil\crcr\noalign{\kern -\baselineskip}#2\crcr\omit %
  \strut\cr}}%
 \setbox\tw@\vbox{\unvcopy\z@\global\setbox\@ne\lastbox}%
 \setbox\tw@\hbox{\unhbox\@ne\unskip\global\setbox\@ne\lastbox}%
 \setbox\tw@\hbox{%
  $\kern\wd\@ne\kern -\@tempdima\left\@firstoftwo#1%
  \if@borderstar\kern 2pt\else\kern -\wd\@ne\fi%
 \global\setbox\@ne\vbox{\box\@ne\if@borderstar\else\kern 2\p@\fi}%
 \vcenter{\if@borderstar\else\kern -\ht\@ne\fi%
  \unvbox\z@\kern -\if@borderstar2\fi\baselineskip}%
 \if@borderstar\kern-2\@tempdima\kern2\p@\else\,\fi\right\@secondoftwo#1 $%
 }\null \;\vbox{\kern\ht\@ne\box\tw@}%
\endgroup
}
\def\iddots{\mathinner{\mkern1mu\raise\p@
    \hbox{.}\mkern2mu\raise4\p@\hbox{.}\mkern2mu
    \raise7\p@\vbox{\kern7\p@\hbox{.}}\mkern1mu}}
\begin{document}

\title[Jordan Lie
subalgebras]{Limits of Jordan Lie
subalgebras}
\author{Mutsumi Saito}

\address{Department of Mathematics,
Graduate School of Science,
Hokkaido University,
Sapporo, 060-0810, Japan
}

\begin{abstract}
Let $\fkg$ be a simple Lie algebra of rank $n$ over $\C$.
We show that the $n$-dimensional abelian ideals 
of a
Borel subalgebra of $\fkg$ are limits of Jordan Lie subalgebras.
Combining this with a classical result by Kostant, 
we show that the $\fkg$-module spanned by
all $n$-dimensional abelian Lie subalgebras of $\fkg$
is actually
spanned by the Jordan Lie subalgebras.
\end{abstract}

\thanks{2010 {\it Mathematics Subject Classification.} 17B20.} 


\maketitle

\section{Introduction}

To define a system for generalized Airy functions,
Gel'fand, Retahk, and Serganova \cite{Gelfand-Retahk-Serganova}
considered a Jordan group, which is the centralizer of a maximal 
Jordan cell in $GL(n)$.
We call its Lie algebra a Jordan Lie subalgebra.
The system is a confluent version of an Aomoto-Gel'fand system
(\cite{Aomoto}, \cite{Gelfand}, etc.) associated to a Cartan subalgebra
of $\fkg\fkl_n$.
Kimura and Takano \cite{Kimura-Takano}
explained the process of confluence by taking limits of regular elements;
a Cartan subalgebra is the centralizer of a semisimple regular
element, and a Jordan Lie subalgebra is that of a nilpotent
regular element.
A natural question thus arises;
describe the set of limits of Cartan subalgebras.
Recall that an element $X$ in a simple Lie algebra $\fkg$ is 
said to be regular if the centralizer 
$\fkz_\fkg(X)$ has the
minimal possible dimension, the rank of $\fkg$.

Let $\fkg$ be a simple Lie algebra of rank $n$ over $\C$,
and $G$ its adjoint group.
Let $\fkh$ be a Cartan subalgebra of $\fkg$.
The question is to consider the closure
$
\overline{\Ad(G)\fkh}
$
in the Grassmannian $\Gr(n, \fkg)$ composed of $n$-dimensional subspaces of $\fkg$.
The centralizer of a regular element certainly belongs to
$\overline{\Ad(G)\fkh}$.
In particular, a Jordan Lie subalgebra $J$ that is the centralizer
of a regular nilpotent element belongs to $\overline{\Ad(G)\fkh}$.
As a generalization of a regular nilpotent element,
Ginzburg \cite{Ginzburg} defined and studied a principal nilpotent pair
(also see \cite{Elashvili-Panyushev}).
We remark that 
its centralizer also belongs to $\overline{\Ad(G)\fkh}$.
More generally a wonderful nilpotent pair was studied in
\cite{PanyushevIMRN} and \cite{Yu};
the $\Z_{\geq 0}\times\Z_{\geq 0}$-graded component of its centralizer 
belongs to $\overline{\Ad(G)\fkh}$.


The theory of abelian ideals of a Borel subalgebra
draw a strong attention to researchers in the representation theory
(e.g., \cite{Cellini-Papi},\cite{Panyushev})
after Kostant's remarkable paper \cite{Kostant1998}
related the theory to the combinatorics of affine Weyl groups
and the theory of discrete series.

In this paper, we show that the $n$-dimensional abelian ideals 
of a
Borel subalgebra of $\fkg$ belong to
$\overline{\Ad(G)J}$.
Combining this with a classical result by Kostant \cite{Kostant1965}, 
we show that the $\fkg$-module spanned by
all $n$-dimensional abelian Lie subalgebras of $\fkg$
is actually
spanned by the Jordan Lie subalgebras.

In Section 2, 
after reviewing the classical result by Kostant \cite{Kostant1965},
we state the main results in this paper.
Then we introduce two types of deformation in Section 3:
unipotent deformation and semisimple deformation.
These are two basic techniques we employ.

In the subsequent sections, we prove the results type by type.
For the classical types, we move some technical details
into Appendix, to make the proofs clearer.

In Section 4, we treat the case of type $A$.
Using the Weyl group action,
we reduce the proof to a problem of the solvability of
a system of inequalities, which is proved in Appendix A.
In Section 5, we consider the other classical types in a uniform manner.
In Sections 6 through 10, we treat the exceptional types.
To compute, we fix a Chevalley basis of $\fkg$ 
as in \cite[Proposition 4]{Kurtzke}.

\section{Main results}

Let $\fkg$ be a simple complex Lie algebra of rank $n$, and
$G$ its adjoint group.

\subsection{Kostant's classical result}

For $k=0,1,\ldots, \dim \fkg$,
$\bigwedge^k \fkg$ is a $\fkg$-module by the adjoint representation.
Let $C_k$ be the subspace of $\bigwedge^k \fkg$ spanned by all $\wedge^k \fka$
where $\fka$ is a $k$-dimensional abelian Lie subalgebra of $\fkg$.
Then $C_k$ is a $\fkg$-submodule of $\bigwedge^k \fkg$.

Fix a Cartan subalgebra $\fkh$ and a Borel subalgebra $\fkb\supseteq \fkh$.
Let $\Delta$ be the root system with respect to $\fkh$, and $\Delta^+$
the positive root system corresponding to $\fkb$.
As a $\fkg$-module, $C_k$ is characterized by the following theorem:

\begin{theorem}[Kostant \cite{Kostant1965}]
\label{thm:Kostant65}
Let $\fka$ be a $k$-dimensional abelian ideal of $\fkb$.
Then $\wedge^k \fka$ is a highest weight vector of $C_k$.
Conversely any highest weight vector of $C_k$ is of this form.
\end{theorem}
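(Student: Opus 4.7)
The plan is to handle the two directions separately. The forward direction is structural, whereas the converse reduces to a decomposability statement in $\bigwedge^k\fkg$ that constitutes the main difficulty.

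For the forward direction, I would first observe that any abelian ideal $\fka\subseteq\fkb$ is $\fkh$-stable and contained in the nilradical $\fkn$, so $\fka=\bigoplus_{\alpha\in I}\fkg_\alpha$ for some $I\subseteq\Delta^+$ satisfying (i) $(I+\Delta^+)\cap\Delta\subseteq I$ (the ideal condition) and (ii) $(I+I)\cap\Delta=\emptyset$ (the abelian condition). Choosing nonzero $e_\alpha\in\fkg_\alpha$, the element $\wedge^k\fka=e_{\alpha_1}\wedge\cdots\wedge e_{\alpha_k}$ is manifestly an $\fkh$-weight vector of weight $\sum_{\alpha\in I}\alpha$. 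To see it is killed by $\fkn$, apply $e_\beta$ for $\beta\in\Delta^+$ via the Leibniz rule: each summand replaces one factor $e_{\alpha_i}$ by a scalar multiple of $e_{\alpha_i+\beta}$, and by (i) this element already lies in $I\setminus\{\alpha_i\}$ whenever it is a root, so the resulting wedge has a repeated entry and vanishes.

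For the converse, let $v\in C_k$ be a highest weight vector of weight $\mu$. I would introduce the support subspace $D(v)=\{x\in\fkg : x\wedge v=0\}\subseteq\fkg$; a standard fact of exterior algebra gives $\dim D(v)\le k$ with equality iff $v$ is decomposable, in which case $v$ spans $\bigwedge^k D(v)$. Since the line $\C v$ is $B$-stable, $D(v)$ is automatically a $\fkb$-submodule of $\fkg$, so once decomposability is granted, $D(v)$ is a $k$-dimensional ideal of $\fkb$, and the proof reduces to showing that $v$ is decomposable and that $D(v)$ is abelian.

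Both of these points are where I expect the main obstacle. I would tackle them together by expanding $v$ in a monomial basis of $\bigwedge^k\fkg$ coming from a Chevalley basis: the weight condition and a highest-weight argument force only monomials $e_{\beta_1}\wedge\cdots\wedge e_{\beta_k}$ with distinct $\beta_j\in\Delta^+$ summing to $\mu$ to appear. Fixing a convex total order on $k$-subsets of $\Delta^+$ and extracting the leading monomial of $v$ yields a set $I=\{\alpha_1,\ldots,\alpha_k\}$, and matching coefficients in $e_\beta\cdot v=0$ for each $\beta\in\Delta^+$ shows $I$ is closed under addition of positive roots, i.e.\ an ideal. It remains to show $I$ is abelian and that $v$ is proportional to $e_{\alpha_1}\wedge\cdots\wedge e_{\alpha_k}$; this is precisely where the hypothesis $v\in C_k$ (rather than merely $v\in(\bigwedge^k\fkg)^\fkn$) becomes essential, by writing $v=\sum_j c_j(\wedge^k\fka_j)$ for abelian $\fka_j$ and extracting Pl\"ucker-type compatibility constraints that rigidify $v$ into the single wedge $\wedge^k\fka$ with $\fka=\bigoplus_{\alpha\in I}\fkg_\alpha$.
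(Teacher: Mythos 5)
This theorem is imported from Kostant's 1965 paper and is not proved anywhere in the present text, so there is no in-paper argument to measure your proposal against; I can only judge it on its own terms. Your forward direction is correct and essentially complete: an abelian ideal of $\fkb$ is $\fkh$-stable and meets $\fkh$ trivially, hence equals $\bigoplus_{\alpha\in I}\fkg_\alpha$ with $(I+\Delta^+)\cap\Delta\subseteq I$, and the Leibniz computation showing that each $e_\beta$, $\beta\in\Delta^+$, kills $e_{\alpha_1}\wedge\cdots\wedge e_{\alpha_k}$ by producing a repeated factor is exactly right.

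The converse, however, contains two genuine gaps. First, your assertion that the weight condition plus the highest-weight condition force $v$ to be a combination of monomials $e_{\beta_1}\wedge\cdots\wedge e_{\beta_k}$ with all $\beta_j\in\Delta^+$ is false for general highest weight vectors of $\bigwedge^k\fkg$: already for $\fks\fkl_2$ the vector $e\wedge h\in\bigwedge^2\fkg$ is a highest weight vector with a Cartan factor. (It happens not to lie in $C_2$, which vanishes there --- but that is precisely the point: excluding Cartan and negative-root factors requires using $v\in C_k$, and since the spanning elements $\wedge^k\fka_j$ of $C_k$ are built from abelian subalgebras that need not sit inside $\fkn$, this exclusion does not come for free.) Second, and more seriously, the two facts you defer --- decomposability of $v$ and the abelian property of the leading root set $I$ --- are the entire content of the theorem; ``Pl\"ucker-type compatibility constraints'' is a placeholder rather than an argument, and it is unclear how closure of $I$ under addition of positive roots, let alone $(I+I)\cap\Delta=\emptyset$, is to be extracted from a presentation $v=\sum_j c_j\,\wedge^k\fka_j$ in which the $\fka_j$ bear no relation to $\fkb$. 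Kostant's actual proof runs through an entirely different mechanism: a Casimir (Laplacian) eigenvalue estimate on $\bigwedge^k\fkg$, showing that the eigenvalue on an irreducible constituent is bounded by a quantity achieved exactly on $C_k$, and that the equality case forces the highest weight to be the sum of $k$ distinct positive roots whose root spaces form an abelian ideal of $\fkb$. Without some quantitative input of this kind, your outline does not close.
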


Let $\fka$ be an abelian ideal of $\fkb$.
Note that there exists a subset $\Delta(\fka)\subseteq \Delta^+$
such that
\begin{equation}
\label{eqn:DeltaA}
\text{$\fka=\bigoplus_{\alpha\in \Delta(\fka)}\fkg_\alpha$ \quad and \quad
$(\Delta^+ +\Delta(\fka))\cap \Delta\subseteq \Delta(\fka)$.}
\end{equation}

\subsection{Jordan Lie subalgebras}

Let $\alpha_1,\ldots,\alpha_n$ be the simple roots in $\Delta^+$;
we follow Bourbaki's notation \cite{Bourbaki}.

Let 
$$
\{ X_\alpha, H_i\,|\, \alpha\in \Delta,\, i=1,2,\ldots,n\}
$$
be a Chevalley basis 
of $\fkg$.
Let $\Lambda:=\sum_{i=1}^n X_{\alpha_i}$,
and $J:=\fkz_{\fkg}(\Lambda)$.
Then $\Lambda$ is a regular nilpotent element
(cf. \cite[Theorem 5.3]{Kostant1959}),
and $J$ is called a Jordan Lie subalgebra of $\fkg$.

We have the following proposition
(see \cite[Lemma 2.5]{Ranee} and \cite[(1.6)]{Ginzburg}):

\begin{proposition}
\label{prop:Ranee}
In the Grassmannian $\Gr(n, \fkg)$,
$$
J=\lim_{t\to 0}\exp t^{-1}\ad \Lambda (\fkh)
\in \overline{\Ad(G)\fkh}.
$$
\end{proposition}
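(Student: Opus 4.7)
The plan is to extend the regular nilpotent $\Lambda$ to a principal $\fks\fkl_2$-triple $(\Lambda, H_0, F)$ in $\fkg$ and exploit the resulting decomposition $\fkg = \bigoplus_{i=1}^n V_i$ into irreducible $\fks\fkl_2$-summands of dimensions $2m_i + 1$, where the $m_i$ are the exponents of $\fkg$. Two standard facts drive the whole argument: because $H_0$ is regular semisimple, the zero-weight space for the principal $\fks\fkl_2$-action on $\fkg$ coincides with $\fkh$, and $J = \ker(\ad \Lambda) = \bigoplus_{i=1}^n \C v_i$, where $v_i \in V_i$ is a highest-weight vector. In particular the number of summands equals $n = \dim \fkh = \dim J$.

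Within each $V_i$ fix a weight-zero vector $w_i := F^{m_i} v_i$; these form a basis $\{w_1, \ldots, w_n\}$ of $\fkh$ adapted to the $\fks\fkl_2$-decomposition. A direct computation in the finite-dimensional $\fks\fkl_2$-module $V_i$ gives the \emph{finite} sum
\[
\exp(t^{-1}\ad\Lambda)(w_i) = \sum_{k=0}^{m_i} \frac{t^{-k}}{k!}\, (\ad \Lambda)^k w_i,
\]
since $(\ad\Lambda)^k w_i$ has $H_0$-weight $2k$ and thus vanishes for $k > m_i$. Rescaling by $t^{m_i}$ yields
\[
t^{m_i}\,\exp(t^{-1}\ad\Lambda)(w_i) = \sum_{k=0}^{m_i} \frac{t^{m_i-k}}{k!}\, (\ad \Lambda)^k w_i \;\xrightarrow[\,t \to 0\,]{}\; \frac{1}{m_i!}\, (\ad \Lambda)^{m_i} w_i,
\]
whose limit is a nonzero scalar multiple of $v_i$.

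For any $t \neq 0$ we have $\exp(t^{-1}\ad\Lambda)(\fkh) = \Ad(\exp(t^{-1}\Lambda))\fkh \in \Ad(G)\fkh$, so only the identification of the Grassmannian limit remains. That subspace admits the rescaled basis $\{t^{m_i}\exp(t^{-1}\ad\Lambda)(w_i)\}_{i=1}^n$ at every $t \neq 0$, and these vectors converge componentwise to the linearly independent set $\{v_1,\ldots,v_n\}$ spanning $J$; hence the limit in $\Gr(n,\fkg)$ equals $J$. The main subtlety, rather than a genuine obstacle, is the need to use this particular basis of $\fkh$ together with the matched rescaling exponents $t^{m_i}$, so that each basis vector contributes its own leading term and the full $n$-dimensional limit $J$ is recovered rather than a proper subspace of it.
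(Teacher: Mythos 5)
Your argument is correct. Note that the paper does not actually prove Proposition \ref{prop:Ranee}: it cites Brylinski's Lemma 2.5 and Ginzburg's (1.6), and your proof is essentially the standard argument underlying those references. You complete $\Lambda$ to a principal $\fks\fkl_2$-triple with semisimple element $H_0\in\fkh$ satisfying $\alpha_i(H_0)=2$, invoke Kostant's decomposition of $\fkg$ into $n$ irreducible summands of dimensions $2m_i+1$, identify $\fkh$ with the zero-weight space (valid because $H_0$ is regular) and $J$ with the span of the highest-weight vectors $v_i$, and then rescale each zero-weight basis vector $w_i=F^{m_i}v_i$ by $t^{m_i}$ so that the surviving leading term is $\frac{1}{m_i!}(\ad\Lambda)^{m_i}w_i$. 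The one point worth making explicit is that $(\ad\Lambda)^{m_i}w_i=E^{m_i}F^{m_i}v_i$ is a \emph{nonzero} multiple of $v_i$; this follows from the standard identity $EF^kv=k(2m_i-k+1)F^{k-1}v$ in a $(2m_i+1)$-dimensional irreducible, all factors being positive for $1\leq k\leq m_i$. Your device of exhibiting, for each $t\neq 0$, a basis of $\exp(t^{-1}\ad\Lambda)(\fkh)$ that converges to the linearly independent set $\{c_iv_i\}$ pins down the Grassmannian limit exactly (the Pl\"ucker coordinates converge to a nonzero decomposable vector); this is slightly stronger than what the paper's Lemma \ref{lemma:ElementVsSpace} gives, which only yields containment of a single limiting vector and would have to be applied $n$ times together with a dimension count to recover equality.
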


For $\alpha\in \Delta^+$, let $\hite(\alpha)$ denote the height of $\alpha$.
Then the nilradical $\fkn$ of $\fkb$ is graded by $\hite$:
$$
\fkn=\bigoplus_{j>0} \fkg_j
\qquad
\fkg_j:=\bigoplus_{\hite(\alpha)=j}\fkg_\alpha.
$$
The Jordan Lie subalgebra $J=\fkz_\fkg(\Lambda)$ is also graded by $\hite$:
$$
J=\bigoplus_j J\cap \fkg_j.
$$
The set of heights appearing in $J$ is exactly the same as 
that of exponents of $\fkg$ counting multiplicities
(cf. \cite[Theorem 6.7]{Kostant1959}).

In the following classical examples,
we take the subset of diagonal matrices and that of upper
triangular matrices as $\fkh$ and $\fkb$, respectively.
Let $\varepsilon_i\in \fkh^*$ denote the linear form
taking the $(i,i)$-component.
The Jordan Lie subalgebras $J$ below can be computed as follows:
First it is easy to check that $Z$ and $\Lambda^i$ belong to 
$\fkg$ for the indicated
powers $i$. It is also clear that they commute with $\Lambda$.
Since we know the heights appearing in $J$
(\cite[Theorem 6.7]{Kostant1959} loc. cit.), we see that 
they form a basis of $J$.

We denote by $E_{i,j}$ the matrix whose entries are $0$ except for the
$(i,j)$-entry $1$.
Let $\gamma_0$ denote the maximal root.

\begin{example}
\label{A-1}
Let $\fkg=\fks\fkl(n+1, \C)$, and
$X_{\alpha_i}:=E_{i, i+1}$ $(1\leq i \leq n)$.
Then $\Lambda=\sum_{i=1}^{n}E_{i, i+1}$,
 and
$$
J=\bigoplus_{i=1}^n \C \Lambda^i.
$$
We have $\gamma_0=\sum_{i=1}^n\alpha_i$
and $\hite(\gamma_0)=n$.
\end{example}

\begin{example}
\label{B-1}
Let 
$\displaystyle
F:=
\begin{bmatrix}
0 & & 1\\
 & \adots & \\
1 && 0
\end{bmatrix},
$
and let
\begin{eqnarray*}
\fkg:=
\fks\fko(2n+1, \C)
&=&
\left\{
X\in \fks\fkl(2n+1)\,|\,
{}^tXF+FX=O
\right\}
\\
&=&
\left\{
\begin{bmatrix}
A & \x & B\\
-{}^t\y & 0 & -{}^t\x \\
C & \y & -A'
\end{bmatrix}
\,|\,
B'=-B,\, C'=-C
\right\},
\end{eqnarray*}
where $A,B,C$ are $n\times n$ matrices,
$\x, \y$ are column vectors of dimension $n$, and
\begin{equation}
\label{antitranspose}
\text{$A':=(a_{n+1-j,n+1-i})$
for an $n\times n$ matrix $A=(a_{i,j})$.}
\end{equation}
The simple roots are
$$
\alpha_1=\varepsilon_1-\varepsilon_2,\ldots,
\alpha_{n-1}=\varepsilon_{n-1}-\varepsilon_n,
\alpha_n=\varepsilon_n.
$$
Let
$X_{\alpha_i}:=E_{i, i+1}-E_{2n+1-i,2n+2-i}$ $(1\leq i \leq n)$.

Then
$$
\Lambda=
\sum_{i=1}^{n} E_{i, i+1} -\sum_{i=n+1}^{2n}E_{i,i+1},
$$
and
$$
J=\bigoplus_{k=1}^{n}
\C \Lambda^{2k-1}.
$$


We have 
$\gamma_0=\varepsilon_1+\varepsilon_2=\alpha_1+2\sum_{k=2}^n\alpha_k$
and
$\hite(\gamma_0)=2n-1$.
\end{example}

\begin{example}
\label{C-1}
Let 
$\displaystyle
F:=
\begin{bmatrix}
0 & & & & &1\\
&&& & \adots & \\
&&& 1 &&\\
&& -1 &&&\\
& \adots &&&&\\
-1 &&&&& 0
\end{bmatrix},
$
and let
\begin{eqnarray*}
\fkg:=
\fks\fkp(2n, \C)
&=&
\left\{
X\in \fks\fkl(2n)\,|\,
{}^tXF+FX=O
\right\}
\\
&=&\left\{
\begin{bmatrix}
A & B\\
C & -A'
\end{bmatrix}
\,|\,
B'=B,\, C'=C
\right\},
\end{eqnarray*}
where $A,B,C$ are $n\times n$ matrices
(cf. \eqref{antitranspose} for $A'$ etc.).

The simple roots are
$$
\alpha_1=\varepsilon_1-\varepsilon_2,\ldots,
\alpha_{n-1}=\varepsilon_{n-1}-\varepsilon_n,
\alpha_n=2\varepsilon_n.
$$
Let $X_{\alpha_i}:= 
E_{i, i+1}-E_{2n-i,2n+1-i}$ $(1\leq i \leq n-1)$,
and
$X_{\alpha_n}:=E_{n,n+1}$.

Then $\Lambda=
\sum_{i=1}^n E_{i, i+1} -\sum_{i=n+1}^{2n-1}E_{i,i+1}
$, and
$$
J=\bigoplus_{k=1}^{n}
\C \Lambda^{2k-1}.
$$
We have $\gamma_0=2\varepsilon_1=
2\sum_{i=1}^{n-1}\alpha_i +\alpha_n$
and
$\hite(\gamma_0)=2n-1$.
\end{example}

\begin{example}
\label{D-1}
Let $\displaystyle
F:=
\begin{bmatrix}
0 & & 1\\
 & \adots & \\
1 && 0
\end{bmatrix},
$
and let
\begin{eqnarray*}
\fkg:=
\fks\fko(2n, \C)
&=&
\left\{
X\in \fks\fkl(2n)\,|\,
{}^tXF+FX=O
\right\}
\\
&=&\left\{
\begin{bmatrix}
A & B\\
C & -A'
\end{bmatrix}
\,|\,
B'=-B,\, C'=-C
\right\},
\end{eqnarray*}
where $A,B,C$ are $n\times n$ matrices.

The simple roots are
$$
\alpha_1=\varepsilon_1-\varepsilon_2,\ldots,
\alpha_{n-1}=\varepsilon_{n-1}-\varepsilon_n,
\alpha_n=\varepsilon_{n-1}+\varepsilon_n.
$$
Let
$X_{\alpha_i}:=E_{i,i+1}-E_{2n-i,2n+1-i}$ $(1\leq i\leq n-1)$
and
$X_{\alpha_n}:=E_{n-1,n+1}-E_{n,n+2}$.
Then
\begin{eqnarray*}
\Lambda&=&
\sum_{i=1}^{n-1} E_{i, i+1} -\sum_{i=n+1}^{2n-1}E_{i,i+1}
+E_{n-1, n+1}-E_{n,n+2},
\end{eqnarray*}
and
$$
J=\C Z\bigoplus\bigoplus_{k=1}^{n-1}
\C \Lambda^{2k-1},
$$
where 
$Z=E_{1,n}-E_{n+1,2n}-E_{1,n+1}+E_{n,2n}$.

The height of $\Lambda^{2k-1}$ equals $2k-1$, and
that of $Z$ $n-1$.
We have
$\gamma_0=\varepsilon_1+\varepsilon_2
=\alpha_1+2\sum_{k=1}^{n-2}\alpha_k+\alpha_{n-1}+\alpha_n$
and
$\hite(\gamma_0)=2n-3$.
\end{example}

\begin{proposition}
\label{prop:JtoK}
There exists an abelian Lie subalgebra 
$K \in \overline{\Ad(G)J}$
with a basis 
$\{ \Lambda^{(i)}\,|\, \hite(\gamma_0)-(n-1)\leq i\leq \hite(\gamma_0)\}$
unless $\fkg$ is of type $D_n$, and
with a basis
$\{ \Lambda^{(i)}\,|\, \hite(\gamma_0)-(n-2)\leq i\leq \hite(\gamma_0)\}\cup
\{ Z\}$
when $\fkg$ is of type $D_n$,
where
$\Lambda^{(i)}$ is 
of the following form:
$$
\Lambda^{(i)}=\sum_{\hite(\alpha)=i}c_\alpha X_\alpha
\qquad
(\text{$c_\alpha\neq 0$ for any $\alpha$}).
$$
Furthermore, in the case of type $D_n$,
we can take $\Lambda^{(n-1)}$
so that $Z$ and
$c_{\varepsilon_1-\varepsilon_n}X_{\varepsilon_1-\varepsilon_n}
+c_{\varepsilon_1+\varepsilon_n}X_{\varepsilon_1+\varepsilon_n}$
are
linearly independent.
\end{proposition}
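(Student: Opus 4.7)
The plan is to construct $K$ as a limit in $\Gr(n, \fkg)$ of a family $\Ad(g(t)) \cdot J$ built from the unipotent and semisimple deformations of Section 3, with details varying by Lie type. For type $A_n$, no deformation is needed: the matrix computation in Example \ref{A-1} gives $\Lambda^i = \sum_{j=1}^{n+1-i} E_{j, j+i} = \sum_{\hite(\alpha)=i} X_\alpha$, so every coefficient equals $1 \neq 0$; since $\hite(\gamma_0) = n$ matches the number of basis elements, one can simply set $K := J$.

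For the classical types $B_n, C_n, D_n$ and the exceptional types, the heights of $J$'s basis (the odd exponents $1, 3, 5, \ldots$, together with $Z$ at height $n-1$ in type $D_n$) do not match the target range. I would apply a unipotent deformation $\Ad(\exp(sY))$ for a suitable nonzero $Y$, expand each basis vector $v_k$ of $J$ as $\sum_m \frac{s^m}{m!}(\ad Y)^m v_k$, rescale the $k$-th basis vector by $s^{-m_k}$ where $m_k$ equals the shift required to reach the target height ($m_k = n - k$ in types $B_n, C_n$; $m_k = n - k - 1$ for the $\Lambda^{2k-1}$-part in $D_n$; and $m_k = 0$ for $Z$), and let $s \to \infty$. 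The limit of the $k$-th rescaled basis vector equals $\frac{1}{m_k!}(\ad Y)^{m_k} v_k \in \fkg_{h_k}$ for the target height $h_k$, and the resulting $n$-dimensional subspace $K$ lies in $\overline{\Ad(G) J}$ by construction. Abelian-ness of $K$ is automatic outside type $A$: all target heights exceed $\hite(\gamma_0)/2$, so every bracket of basis vectors lies in $\fkg_{>\hite(\gamma_0)} = 0$. The exceptional types follow the same two-step scheme, with $Y$ chosen case by case from the explicit Chevalley basis of Kurtzke.

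The principal obstacle is the full-support condition $c_\alpha \neq 0$: the iterated bracket $(\ad Y)^{m_k} v_k$ must have nonzero coefficient on every height-$h_k$ root vector. A naive choice of $Y$ fails in general, so one verifies nonvanishing by direct computation in the matrix realization (classical types, Sections 4--5) or in the Chevalley basis (exceptional types, Sections 6--10). For type $D_n$ there is the further constraint that $Z$ and the $\fkg_{\varepsilon_1 - \varepsilon_n} \oplus \fkg_{\varepsilon_1 + \varepsilon_n}$-component of $\Lambda^{(n-1)}$ be linearly independent. This forces the projection of $(\ad Y)^{n-2}\Lambda$ onto $X_{\varepsilon_1 \pm \varepsilon_n}$ not to be a scalar multiple of $Z = X_{\varepsilon_1 - \varepsilon_n} - X_{\varepsilon_1 + \varepsilon_n}$; since $\fkg_{n-1}$ contains additional height-$(n-1)$ root vectors, the residual freedom in the coefficients of $Y$ can be exploited to avoid the bad hyperplane.
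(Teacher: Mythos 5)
Your type-$A$ case agrees with the paper ($K=J$ works verbatim), and the overall strategy of realizing $K$ as a limit of unipotent deformations of $J$ is the paper's as well. For the remaining types, however, the specific mechanism you propose has a genuine gap. You rescale the $k$-th basis vector of $J$ by $s^{-m_k}$, with $m_k$ equal to the height shift needed to reach the target, and assert that
$$
\lim_{s\to\infty}s^{-m_k}\exp(s\,\ad Y)\,v_k=\tfrac{1}{m_k!}(\ad Y)^{m_k}v_k .
$$
This limit exists only if $(\ad Y)^{m}v_k=0$ for all $m>m_k$, or, as in Lemma \ref{UnipotentDeformation}(2), if those higher iterated brackets already lie in the subalgebra being deformed so that they can be subtracted off before taking the limit. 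Your prescription $m_k=n-k$ forces $Y$ to be homogeneous of height $1$, and then $(\ad Y)^{m}\Lambda^{2k-1}$ lives in $\fkg_{2k-1+m}$, which is a nonzero graded piece for all $m$ up to $\hite(\gamma_0)-(2k-1)$ --- well beyond $m_k$ when $k$ is small. The divergent higher-order terms are therefore not controlled, and you give no argument that a height-one $Y$ exists whose $\ad$-strings on the $\Lambda^{2k-1}$ terminate at exactly the prescribed orders. You single out the full-support condition $c_\alpha\neq 0$ as the principal obstacle, but this termination problem is at least as serious and is left unaddressed.

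The paper sidesteps the issue by taking the deforming element $S$ of large height (height $n$ or $n-1$ in types $B$, $C$, $D$, and analogously in the exceptional types), so that only a \emph{single} bracket is ever needed: $[S,\Lambda^{2k-1}]$ fills in the missing target heights for the low generators, the generators with $2k-1\geq n$ are kept as they are, and $(\ad S)^2$ either vanishes for height reasons or maps back into $J$, so Lemma \ref{UnipotentDeformation} applies directly and $K=\lim_{t\to 0}\exp(t^{-1}\ad S)(J)$ is computed cleanly. The coefficients of $S$ are then chosen explicitly (e.g.\ $a_i=i$) to secure $c_\alpha\neq 0$ and, in type $D_n$, the required independence of $Z$ and the $(\varepsilon_1\pm\varepsilon_n)$-component of $\Lambda^{(n-1)}$. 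To repair your argument you would need either to prove the existence of a height-one $Y$ with the required truncated $\ad$-strings, or to switch to a large-height $S$ as the paper does.
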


\begin{proof}
\noindent
When $\fkg$ is a simple Lie algebra of classical type,
there exists a nilpotent element $S\in \fkg$ such that
$K:=\lim_{t\to 0}\exp(t^{-1}\ad S)(J)$.
Indeed, if $\fkg$ is of type $A$, then
take $S=0$. Then $K=J$ and $\Lambda^{(i)}=\Lambda^i$ meet
the condition.
For the other classical types, we prove the existence of such an $S$ 
in Appendix.

For a simple Lie algebra of exceptional type,
we see the statement in its own section.
\end{proof}

\subsection{Main Theorem}

The following is the main theorem of this paper.
The proof is given by a type-by-type consideration.
Propositions \ref{prop:Ranee} and \ref{prop:JtoK}
lead to the latter half of the statements in Theorem \ref{MainTheorem}.

\begin{theorem}
\label{MainTheorem}
Let $\fka$ be an $n$-dimensional abelian ideal of $\fkb$,
and let $K$ be an abelian Lie subalgebra described in Proposition
\ref{prop:JtoK}.
Then $\fka\in \overline{\Ad(G) K}$ in $\Gr(n, \fkg)$.
Hence
$\fka\in \overline{\Ad(G) J}$, and
$\fka\in \overline{\Ad(G) \fkh}$.
\end{theorem}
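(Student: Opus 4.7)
The second and third conclusions follow from the first by chasing closures: since $\overline{\Ad(G)K}$ is $G$-stable, $K\in\overline{\Ad(G)J}$ (Proposition \ref{prop:JtoK}) gives $\Ad(G)K\subseteq\overline{\Ad(G)J}$ and hence $\overline{\Ad(G)K}\subseteq\overline{\Ad(G)J}$, and similarly $J\in\overline{\Ad(G)\fkh}$ (Proposition \ref{prop:Ranee}) gives $\overline{\Ad(G)J}\subseteq\overline{\Ad(G)\fkh}$. So my whole effort goes into showing $\fka\in\overline{\Ad(G)K}$, which I plan to do type by type by constructing a one-parameter family $g_t\in G$ with $\lim_{t\to 0}\Ad(g_t)K=\fka$ in $\Gr(n,\fkg)$.

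The two tools from Section 3 are the semisimple deformation $g_t=\exp(t^{-1}\ad H)$ for $H\in\fkh$, which scales $\fkg_\alpha$ by $\exp(t^{-1}\alpha(H))$ and whose Grassmannian limit is governed by the signs of $\alpha(H)$, and the unipotent deformation $g_t=\exp(t^{-1}\ad Y)$ for nilpotent $Y$, which shuffles components between root spaces via the bracket. The target $\fka=\bigoplus_{\alpha\in\Delta(\fka)}\fkg_\alpha$ has one basis direction per root in $\Delta(\fka)$, while $K$ carries the concentrated basis $\Lambda^{(i)}=\sum_{\hite(\alpha)=i}c_\alpha X_\alpha$ with essentially a single vector per height near the top. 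So the deformation must, for each relevant $i$, annihilate the components of $\Lambda^{(i)}$ along roots outside $\Delta(\fka)$ while still producing enough linearly independent limits inside $\fka$.

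For type $A$, with $K=J$ and $\Lambda^{(i)}=\Lambda^i$ as in Example \ref{A-1}, I would first invoke the Weyl-group action: since $\overline{\Ad(G)K}$ is $G$-invariant it suffices to establish $\Ad(w)\fka\in\overline{\Ad(G)K}$ for a convenient representative $w\in W=\fkS_{n+1}$, which normalizes $\Delta(\fka)$ in the staircase picture. Then I would look for a diagonal $H=\diag(h_1,\ldots,h_{n+1})$ such that the leading exponential term of $\Ad(\exp(t^{-1}\ad H))\Lambda^i$ picks out exactly the contributions along $\Delta(\fka)\cap\{\hite=i\}$. This translates into a system of strict linear inequalities in the $h_j$, whose solvability I defer to Appendix A. For types $B$, $C$, $D$ I would proceed along the same lines in the matrix realizations of Examples \ref{B-1}--\ref{D-1}, but coupling the semisimple deformation with a unipotent one along a root vector $X_\beta$ such that $[X_\beta,X_\alpha]$ bridges roots of equal height, in order to reproduce abelian ideals containing several roots of a single height; the distinguished height-$(n-1)$ element $Z$ provided by Proposition \ref{prop:JtoK} is precisely what makes this available in type $D_n$. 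For the exceptional types the argument is case-by-case: I would enumerate the finitely many $n$-dimensional abelian ideals and exhibit the deformation directly in the Chevalley basis of \cite{Kurtzke}.

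The hardest part, I expect, is the ``splitting'' step in the classical and exceptional cases: when $\fka$ contains two or more roots of the same height $i$, a torus deformation alone cannot produce two linearly independent vectors from the single $\Lambda^{(i)}$, so one must engage a unipotent deformation whose bracket structure actually reaches the required roots, and then verify that the combined family of subspaces stays in $\Gr(n,\fkg)$ in the limit rather than collapsing to a lower-dimensional one. Controlling this bookkeeping uniformly is what forces the type-by-type organization of the argument, and in the exceptional cases requires explicit computation in the Chevalley basis.
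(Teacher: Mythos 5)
Your overall strategy is the paper's: the two ``hence'' statements follow exactly as you say from Propositions \ref{prop:JtoK} and \ref{prop:Ranee} together with $G$-stability of the closures; the type-$A$ case is handled by the Weyl-group reduction of Remark \ref{rem:MuandMu'} followed by a single diagonal torus limit whose exponents are built from a solution of the system $(IE_\mu)$ of Appendix A; and the exceptional types are finitely many explicit computations in the Chevalley basis. You have also correctly located the crux for types $B$, $C$, $D$: when $\Delta(\fka)$ contains several roots of the same height, a torus limit of the single generator $\Lambda^{(i)}$ cannot produce two independent vectors at that height.

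At exactly that point, however, the proposal stops short of a proof, and the missing idea is concrete. The extra vectors at a repeated height do not arise from a bracket ``bridging roots of equal height'' (a bracket with $X_\beta$, $\beta\in\Delta^+$, strictly raises height); they are supplied by the generators of $K$ sitting at heights \emph{below} $\min\hite(\Delta(\fka))$. Since $\Delta(\fka)$ is closed under adding positive roots by \eqref{eqn:DeltaA}, its set of heights is the full interval from $\min\hite(\Delta(\fka))$ to $\hite(\gamma_0)$, so the number of these spare low generators $\Theta_k$ of $K$ is exactly $\sharp L$, the number of ``repeated-height'' roots; each step of the paper's argument applies $\exp(t^{-1}\ad X_\beta)$ with $\beta$ chosen so that $[X_\beta,\Theta_k]$ is a nonzero multiple of the required extra root vector, consuming one spare generator per extra root (in type $D_n$ the element $Z$ is one of these $\Theta_k$). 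Making this rigorous forces an induction through a chain $K=\fka_1,\ldots,\fka_{n+1}=\fka$ indexed by a total order on $\Delta(\fka)$, with the surviving generators truncated by the operators $P_{\leq t_l}$ so that each intermediate $\fka_l$ remains abelian, satisfies \eqref{eqn:AbelianSubalgebra}, and maps onto $\fka_{l+1}$ under one further torus or unipotent limit; none of this bookkeeping, which is the technical heart of the theorem, is supplied or sketched in your plan. You would also need to treat separately the handful of ideals of Lemma \ref{lemma:RootsBCD} containing a root not of the form $\varepsilon_i+\varepsilon_j$, since those fall outside the $Y=\{(i(\alpha),j(\alpha))\}$ combinatorics on which the induction is built.
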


\begin{corollary}
\label{cor:Main}
The subspace $C_n$ of $\wedge^n \fkg$
is spanned by any of the following:
\begin{enumerate}
\item[{\rm (1)}]
$\{\wedge^n \fka\,|\, \fka\in {\Ad(G) K}\}$,
\item[{\rm (2)}]
$\{\wedge^n \fka\,|\, \fka\in \overline{\Ad(G) K}\}$,
\item[{\rm (3)}]
$\{\wedge^n \fka\,|\, \fka\in {\Ad(G) J}\}$,
\item[{\rm (4)}]
$\{\wedge^n \fka\,|\, \fka\in \overline{\Ad(G) J}\}$,
\item[{\rm (5)}]
$\{\wedge^n \fka\,|\, \fka\in {\Ad(G) \fkh}\}$,
\item[{\rm (6)}]
$\{\wedge^n \fka\,|\, \fka\in \overline{\Ad(G) \fkh}\}$.
\end{enumerate}
\end{corollary}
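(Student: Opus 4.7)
The plan is to show that all six spans coincide with $C_n$ by a sandwich argument: each is contained in $C_n$ because every subspace in the closure of $\Ad(G)\fkh$ is an abelian subalgebra, and the smallest span already contains every highest weight vector of $C_n$. The whole statement then drops out of Theorem \ref{MainTheorem}, Kostant's Theorem \ref{thm:Kostant65}, and continuity of the Pl\"ucker embedding.

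First I would verify that every $\fka$ appearing in any of the six lists is an abelian Lie subalgebra. Being abelian is a Zariski-closed condition on $\Gr(n,\fkg)$---the bracket descends to a morphism $\wedge^2\fka\to\fkg$ whose vanishing locus is closed---so since $\Ad(G)\fkh$ consists of abelian subalgebras, so does $\overline{\Ad(G)\fkh}$, and hence so does each of its subsets appearing in (1)--(6). Consequently $\wedge^n\fka\in C_n$ in every case, and each of the six spans lies inside $C_n$.

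Next I would observe that passage to closure leaves the span unchanged. The Pl\"ucker embedding $\Gr(n,\fkg)\hookrightarrow\P(\wedge^n\fkg)$ is continuous, and every finite-dimensional subspace of $\wedge^n\fkg$ is closed; hence for any $S\subseteq\Gr(n,\fkg)$ the span of $\{\wedge^n\fka:\fka\in S\}$ equals the span of $\{\wedge^n\fka:\fka\in\overline S\}$. This collapses the pairs $(1)/(2)$, $(3)/(4)$, $(5)/(6)$, and the inclusions $K\in\overline{\Ad(G)J}$ (Proposition \ref{prop:JtoK}) and $J\in\overline{\Ad(G)\fkh}$ (Proposition \ref{prop:Ranee}) then yield the chain of containments among the three remaining spans, reducing the task to showing that the span of (1) already exhausts $C_n$.

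For this, I would note that $\Ad(G)K$ is $G$-stable, so the span of (1) is a $G$-invariant---hence, $G$ being connected, $\fkg$-invariant---subspace of $C_n$. By Kostant's Theorem \ref{thm:Kostant65}, every highest weight vector of $C_n$ is of the form $\wedge^n\fka$ with $\fka$ an $n$-dimensional abelian ideal of $\fkb$; Theorem \ref{MainTheorem} asserts that each such $\fka$ lies in $\overline{\Ad(G)K}$, so $\wedge^n\fka$ lies in the span of (2), which equals the span of (1). Since $\fkg$ is semisimple, $C_n$ is completely reducible and is generated as a $\fkg$-module by its highest weight vectors, so the span of (1) equals $C_n$ and the whole chain collapses. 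I do not anticipate a real obstacle: the only point requiring mild care is to remember that $\wedge^n\fka$ denotes a line in $\wedge^n\fkg$ and that ``span'' refers to the vector-space span of its representatives, which is exactly what makes the Pl\"ucker closure argument go through.
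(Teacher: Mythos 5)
Your proof is correct and follows essentially the same route as the paper: reduce everything to the span over $\Ad(G)K$ via the chain of containments and the observation that passing to closures does not change the span (since a finite-dimensional subspace of $\bigwedge^n\fkg$ is closed and the Pl\"ucker map is continuous), then use $G$-invariance together with Theorem \ref{thm:Kostant65} and Theorem \ref{MainTheorem} to see that this smallest span contains all highest weight vectors of $C_n$ and hence equals $C_n$. Your explicit checks that the abelian condition is closed and that $C_n$ is generated by its highest weight vectors are points the paper leaves implicit, but they do not change the argument.
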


\begin{proof}
It is enough to prove (1).
The subspace spanned by any set from (1) to (6)
is a $G$-submodule of $C_n$, and thus a $\fkg$-submodule.
Hence (2) is clear from Theorems \ref{thm:Kostant65}
and \ref{MainTheorem}.
Let $C_n'$ be the subspace spanned by 
$\{\wedge^n \fka\,|\, \fka\in {\Ad(G) K}\}$.
Since this is closed and includes 
$\{\wedge^n \fka\,|\, \fka\in {\Ad(G) K}\}$,
we see
$$
C_n'\supseteq \{\wedge^n \fka\,|\, \fka\in \overline{\Ad(G) K}\}.
$$
Hence we obtain (1) from (2).
\end{proof}

We use the following lemma very often throughout this paper.
\begin{lemma}
\label{lemma:ElementVsSpace}
Let $\C^\times\ni t\mapsto \fka_t\in \Gr(n, \fkg)$
and $\C^\times \ni t\mapsto A_t\in \fkg\setminus\{ 0\}$
be morphisms.
Suppose that $A_t\in \fka_t$ for all $t\in \C^\times$,
and $A:=\lim_{t\to 0}A_t$ exists in $\fkg\setminus\{ 0\}$.

Then $A\in \lim_{t\to 0}\fka_t$.
\end{lemma}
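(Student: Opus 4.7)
My plan is to deduce the lemma from the closedness of the tautological incidence variety over the Grassmannian.

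First I would observe that $\Gr(n,\fkg)$ is projective, hence proper, so the morphism $t\mapsto \fka_t$ extends uniquely to a morphism $\C\to \Gr(n,\fkg)$ by the valuative criterion of properness. Denote its value at $0$ by $\fka_0:=\lim_{t\to 0}\fka_t$; the task is to show $A\in\fka_0$.

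The key step is to use the tautological subbundle, viewed as the closed subvariety
$$
\cI := \{\, (v,V)\in \fkg\times \Gr(n,\fkg)\,:\, v\in V\,\}.
$$
By hypothesis, $(A_t,\fka_t)\in\cI$ for every $t\in\C^\times$, and this pair converges in $\fkg\times \Gr(n,\fkg)$ to $(A,\fka_0)$ as $t\to 0$. Closedness of $\cI$ then forces $(A,\fka_0)\in\cI$, which is exactly the assertion $A\in\fka_0$.

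If a more hands-on argument is preferred, one can instead work in an affine chart: pick a vector-space complement $V$ of $\fka_0$ in $\fkg$, so that a Zariski open neighbourhood of $\fka_0$ in the Grassmannian is identified with $\Hom(\fka_0,V)$ via the graph construction. For $t$ in a small punctured disc around $0$, the subspace $\fka_t$ is the graph of a linear map $\phi_t\colon\fka_0\to V$ depending morphically on $t$, with $\phi_0=0$. Decomposing $A_t=B_t+\phi_t(B_t)$ with $B_t\in\fka_0$, the $\fka_0$-part $B_t$ tends to the $\fka_0$-component of $A$, while the $V$-part $\phi_t(B_t)$ tends to $\phi_0(\lim B_t)=0$; hence the $V$-component of $A$ vanishes, giving $A\in\fka_0$. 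I do not anticipate any real obstacle here; the only subtle point is the existence of $\fka_0$ itself, which is supplied by properness of the Grassmannian.
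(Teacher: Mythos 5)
Your argument is correct and is essentially the same as the paper's: both rest on the closedness of the incidence correspondence $\{(v,V): v\in V\}$ over $\Gr(n,\fkg)$, so that the limit of the pairs $(A_t,\fka_t)$ must again lie in it. The paper establishes this closedness by hand, via the wedge-product morphisms $(Y,[\a_1,\ldots,\a_n])\mapsto Y\wedge\a_1\wedge\cdots\wedge\a_n$ and passage to $\P(\fkg)\times\Gr(n,\fkg)$, whereas you invoke the tautological subbundle directly (and, as a bonus, justify the existence of $\lim_{t\to 0}\fka_t$ by properness); this is a presentational difference only.
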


\begin{proof}
Consider morphisms
\begin{eqnarray*}
P&:&\fkg\times (\fkg)^n\ni
(Y, [\a_1,\ldots,\a_n])\mapsto
Y\wedge \a_1\wedge\cdots\wedge\a_n\in \bigwedge^{n+1}\fkg,\\
P'&:&(\fkg)^n\ni
[\a_1,\ldots,\a_n]\mapsto
\a_1\wedge\cdots\wedge\a_n\in \bigwedge^{n}\fkg.
\end{eqnarray*}
Then $P^{-1}(0)$ ($P'^{-1}(0)$ respectively)
is closed and $\C^\times \times GL(n)$-stable
($GL(n)$-stable respectively).
Hence $P^{-1}(0)\cap ((\fkg\setminus\{ 0\})\times(\fkg^n\setminus P'^{-1}(0)))$
is closed in $(\fkg\setminus\{ 0\})\times(\fkg^n\setminus P'^{-1}(0))$
and $\C^\times \times GL(n)$-stable.

Thus its image 
$$
\{ (\C Y, \fka)\, |\, Y\in \fka\}
$$
under the canonical morphism
is closed in  $\P(\fkg)\times \Gr(n, \fkg)$.
Hence 
$(\C A, \lim_{t\to 0}\fka_t)=\lim_{t\to 0}(\C A_t, \fka_t)$
belongs to $\{ (\C Y, \fka)\, |\, Y\in \fka\}$, i.e.,
$A\in \lim_{t\to 0}\fka_t$.
\end{proof}

We close this section with the following small example of 
Proposition \ref{prop:JtoK} and Theorem
\ref{MainTheorem}:

\begin{example}\rm
Let $\fkg=\fks\fkp(6,\C)$.
As in Example \ref{C-1}, let
$$
J=\left\{ A=a\Lambda+b\Lambda^3+c\Lambda^5=
\begin{bmatrix}
0 & a & 0 & b & 0 & c\\
0 & 0 & a & 0 & -b & 0\\
0 & 0 & 0 & a & 0 & b\\
0 & 0 & 0 & 0 & -a & 0\\
0 & 0 & 0 & 0 & 0 & -a\\
0 & 0 & 0 & 0 & 0 & 0
\end{bmatrix}
\right\}.
$$
There are two $3$-dimensional abelian ideals
of the upper triangular Borel subalgebra:
$$
\fka_1=\left\{
\begin{bmatrix}
0 & 0 & 0 & b & a & c\\
0 & 0 & 0 & 0 & 0 & a\\
0 & 0 & 0 & 0 & 0 & b\\
0 & 0 & 0 & 0 & 0 & 0\\
0 & 0 & 0 & 0 & 0 & 0\\
0 & 0 & 0 & 0 & 0 & 0
\end{bmatrix}
\right\},\quad
\fka_2=\left\{
\begin{bmatrix}
0 & 0 & 0 & 0 & a & c\\
0 & 0 & 0 & 0 & b & a\\
0 & 0 & 0 & 0 & 0 & 0\\
0 & 0 & 0 & 0 & 0 & 0\\
0 & 0 & 0 & 0 & 0 & 0\\
0 & 0 & 0 & 0 & 0 & 0
\end{bmatrix}
\right\}.
$$

We have
$$
[E_{25},A]=
\begin{bmatrix}
0 & 0 & 0 & 0 & -a & 0\\
0 & 0 & 0 & 0 & 0 & -a\\
0 & 0 & 0 & 0 & 0 & 0\\
0 & 0 & 0 & 0 & 0 & 0\\
0 & 0 & 0 & 0 & 0 & 0\\
0 & 0 & 0 & 0 & 0 & 0
\end{bmatrix}.
$$
Hence 
\begin{eqnarray*}
&&
\exp(t^{-1}\ad E_{25})(t\Lambda)
=
t\Lambda -(E_{15}+E_{26}),\\
&&
\exp(t^{-1}\ad E_{25})(\Lambda^3)
=
\Lambda^3,\\
&&
\exp(t^{-1}\ad E_{25})(\Lambda^5)
=
\Lambda^5.\\
\end{eqnarray*}
By Lemma \ref{lemma:ElementVsSpace}
$$
\lim_{t\to 0} \exp(t^{-1} \ad E_{25})(J)=
\left\{ 
\begin{bmatrix}
0 & 0 & 0 & b & -a & c\\
0 & 0 & 0 & 0 & -b & -a\\
0 & 0 & 0 & 0 & 0 & b\\
0 & 0 & 0 & 0 & 0 & 0\\
0 & 0 & 0 & 0 & 0 & 0\\
0 & 0 & 0 & 0 & 0 & 0
\end{bmatrix}
\right\}=:K.
$$

Let
$$
d_1(t):=\diag(1,t,1,1,t^{-1},1),\quad
d_2(t):=\diag(t,1,1,1,1,t^{-1}).
$$
Then by Lemma \ref{lemma:ElementVsSpace}
$$
\lim_{t\to 0} \Ad(d_i(t))(K)=\fka_{i}\qquad (i=1,2).
$$
Hence $\fka_{1}, \fka_{2}$
are contained in $\overline{\Ad(G)K}$.
\end{example}

\section{Basic deformations}

In this section,
 $\fka$ is  an abelian subalgebra of $\fkb$, and
we suppose that, 
\begin{equation}
\label{eqn:AbelianSubalgebra}
\text{
if $\alpha\in \Delta^+$ 
and $\fkg_\alpha
\subseteq \fka$, then
$\fkg_{\alpha+\beta}\subseteq \fka$
for all $\beta\in \Delta^+$.}
\end{equation}
By \eqref{eqn:DeltaA},
abelian ideals of $\fkb$ satisfy \eqref{eqn:AbelianSubalgebra}.

We prepare two deformations: unipotent deformation and
semisimple deformation, which are used many times in this paper.

\begin{lemma}
\label{UnipotentDeformation}
Let $\beta\in \Delta^+$.
\begin{enumerate}
\item
If $\alpha\in \Delta^+$ 
and $\fkg_\alpha\subseteq \fka$, then
$$
\fkg_\alpha\subseteq \lim_{t\to 0} \exp(t^{-1}\ad X_\beta)(\fka).
$$
\item
Let $\Gamma\in \fka$.
If $(\ad X_\beta)^i (\C\Gamma)\not\subseteq\fka$ and 
$(\ad X_\beta)^{j} (\C\Gamma)\subseteq \fka$ for all $j>i$, then
$$
(\ad X_\beta)^{i} (\C\Gamma)
\subseteq \lim_{t\to 0} \exp(t^{-1}\ad X_\beta)(\fka).
$$
\end{enumerate}
\end{lemma}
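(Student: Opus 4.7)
The plan for both parts is to invoke Lemma \ref{lemma:ElementVsSpace}: one exhibits a morphism $\C^\times\ni t\mapsto A_t\in \fka_t := \exp(t^{-1}\ad X_\beta)(\fka)$ whose limit as $t\to 0$ is a nonzero scalar multiple of the desired element, and concludes that this element lies in $\lim_{t\to 0}\fka_t$.

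For part (1), I would first verify the stronger statement that $X_\alpha\in \fka_t$ for every $t\in\C^\times$. This amounts to showing that
\[
\exp(-t^{-1}\ad X_\beta)(X_\alpha) \;=\; X_\alpha + \sum_{k\geq 1}\frac{(-t^{-1})^k}{k!}(\ad X_\beta)^k X_\alpha
\]
lies in $\fka$. Each summand with $k\geq 1$ is a scalar multiple of $X_{\alpha+k\beta}$, which lies in $\fka$ by iterating condition \eqref{eqn:AbelianSubalgebra}, and $X_\alpha\in\fka$ by hypothesis. Taking the constant family $A_t := X_\alpha$, Lemma \ref{lemma:ElementVsSpace} gives $X_\alpha\in\lim_{t\to 0}\fka_t$, hence $\fkg_\alpha=\C X_\alpha\subseteq\lim_{t\to 0}\fka_t$.

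For part (2), set $Y_k:=(\ad X_\beta)^k\Gamma$, so that $Y_0=\Gamma\in\fka$, $Y_k\in\fka$ for $k>i$, and $Y_i\neq 0$ with $Y_i\notin\fka$. Since $\ad X_\beta$ is nilpotent on $\fkg$, only finitely many $Y_k$ are nonzero. I would construct
\[
Z_t \;=\; \frac{t^i}{i!}\,\Gamma \;+\; \sum_{k>i} c_k(t)\,Y_k \;\in\;\fka
\]
as a finite sum with Laurent-polynomial coefficients $c_k(t)$, chosen so that $A_t:=\exp(t^{-1}\ad X_\beta)(Z_t)\in\fka_t$ tends to a nonzero multiple of $Y_i$. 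Expanding,
\[
A_t \;=\; \sum_{l\geq 0}\left[\frac{t^{i-l}}{i!\,l!} \;+\; \sum_{i<k\leq l}\frac{c_k(t)\,t^{k-l}}{(l-k)!}\right]Y_l.
\]
The coefficient of $Y_i$ equals $1/(i!)^2\neq 0$; those of $Y_l$ with $l<i$ tend to $0$ as $t\to 0$; and for $l>i$ the coefficients $c_{i+1}(t),c_{i+2}(t),\ldots$ are determined recursively, each step being one linear equation that kills the bracketed coefficient of $Y_l$. Hence $\lim_{t\to 0}A_t = Y_i/(i!)^2\neq 0$, and Lemma \ref{lemma:ElementVsSpace} yields $(\ad X_\beta)^i(\C\Gamma)=\C Y_i\subseteq\lim_{t\to 0}\fka_t$.

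The only step requiring care is the triangular recursion in (2), but the system is strictly upper triangular in $(c_{i+1},c_{i+2},\ldots)$ and terminates thanks to nilpotency, so no real obstruction arises.
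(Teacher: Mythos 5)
Your proof is correct and follows essentially the same route as the paper: part (1) by checking that $X_\alpha=\exp(t^{-1}\ad X_\beta)\bigl(\exp(-t^{-1}\ad X_\beta)(X_\alpha)\bigr)$ lies in every $\exp(t^{-1}\ad X_\beta)(\fka)$, and part (2) by the same triangular recursion for Laurent-polynomial correction coefficients, both concluded via Lemma \ref{lemma:ElementVsSpace}. The only cosmetic difference is your normalization $t^i\Gamma/i!$ (limit $Y_i/(i!)^2$) versus the paper's $t^i\Gamma$ (limit $Y_i/i!$), which is immaterial since either is a nonzero multiple of $(\ad X_\beta)^i\Gamma$.
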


\begin{proof}
(1)\quad
By the assumption
\eqref{eqn:AbelianSubalgebra}, 
$(\ad X_\beta)^i(X_\alpha)\in \fka$ for all $i$.
Suppose that $k$ is the maximal $i$ with $(\ad X_\beta)^i(X_\alpha)\neq 0$.
Then 
$$
\exp(-t^{-1}\ad X_\beta)(X_\alpha)
=\sum_{i=0}^{k}\frac{1}{i!}(-t^{-1}\ad X_\beta)^i(X_\alpha)\in \fka
$$
for all $t\neq 0$.
Since
$$
X_\alpha=\exp(t^{-1}\ad X_\beta)\exp(-t^{-1}\ad X_\beta)(X_\alpha)
\in \exp(t^{-1}\ad X_\beta)(\fka)
$$
for all $t\neq 0$, we have by Lemma \ref{lemma:ElementVsSpace}
$$
X_\alpha \in \lim_{t\to 0} \exp(t^{-1}\ad X_\beta)(\fka).
$$

(2)\quad
Suppose that $k$ is the maximal $j$ with $(\ad X_\beta)^j(\Gamma)\neq 0$.
We inductively define Laurent polynomials $a_j(t)\in \C[t, t^{-1}]$ by
\begin{eqnarray*}
a_0(t)&=& a_1(t)=\cdots =a_i(t)=0\\
a_j(t)&=& \frac{1}{j!}t^{-j}-\sum_{q=i+1}^{j-1}\frac{1}{(j-q)!}t^{q-j}a_q(t) 
\qquad (i+1\leq j\leq k).
\end{eqnarray*}
Then
\begin{eqnarray*}
&&\exp(t^{-1}\ad X_\beta)(\sum_{q=i+1}^k a_q(t)(\ad X_\beta)^q(\Gamma))\\
&=&
\sum_{p,q}\frac{1}{p!}t^{-p}a_q(t)(\ad X_\beta)^{p+q}(\Gamma)\\
&=&
\sum_{j=i+1}^k\sum_{q=i+1}^j
\frac{1}{(j-q)!}t^{-(j-q)}a_q(t)(\ad X_\beta)^{j}(\Gamma)\\
&=&
\sum_{j=i+1}^k
\frac{1}{j!}t^{-j}(\ad X_\beta)^{j}(\Gamma).
\end{eqnarray*}
We have
\begin{eqnarray*}
&&\lim_{t\to 0}
\exp(t^{-1}\ad X_\beta)
(t^i(\Gamma-\sum_{q=i+1}^k a_q(t)(\ad X_\beta)^q(\Gamma)))\\
&=&
\lim_{t\to 0}(\frac{1}{i!}(\ad X_\beta)^i(\Gamma)+o(1))
=\frac{1}{i!}(\ad X_\beta)^i(\Gamma).
\end{eqnarray*}
Hence by Lemma \ref{lemma:ElementVsSpace}
$$
(\ad X_\beta)^{i} (\C\Gamma)\subseteq \lim_{t\to 0} \exp(t^{-1}\ad X_\beta)(\fka).
$$
\end{proof}

Let 
$T$ be the maximal torus of $G$ with Lie algebra $\fkh$.
Let $\chi_1,\ldots,\chi_n$ be the characters of $T$
corresponding to the simple roots
$\alpha_1,\ldots,\alpha_n$, respectively.
Let $\lambda_1,\ldots,\lambda_n$ be the $1$-parameter subgroups of $T$
such that $\chi_j(\lambda_i(t))=t^{\delta_{ij}}$.
For $\m=(m_1,\ldots,m_n)\in \Z^n$ and
 $\alpha=\sum_{j=1}^nd_j\alpha_j
\in \Delta^+$, set
\begin{equation}
\label{eqn:parenthesis}
(\m, \alpha)=\sum_{j=1}^n m_jd_j.
\end{equation}

Hence 
$$
\chi_\alpha(\prod_{j=1}^n \lambda_j^{m_j}(t))
=t^{(\m, \alpha)},
$$
where $\chi_\alpha$ is the character of $T$ corresponding to $\alpha$.

\begin{lemma}
\label{ToricDeformation}
Let $\Gamma=\sum_{\alpha\in \Delta^+}a_\alpha X_\alpha\in \fka$.
Suppose that $\fkg_\alpha\subseteq \fka$ if
$(\m, \alpha)<c$ and $a_\alpha\neq 0$.
Then
$$
\sum_{(\m, \alpha)=c}a_\alpha X_\alpha
\in 
\lim_{t\to 0}
\Ad(\prod_{j=1}^n \lambda_j^{m_j}(t))(\fka).
$$
\end{lemma}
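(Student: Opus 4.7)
The plan is to construct an explicit family of elements lying in $\Ad(d(t))(\fka)$, where $d(t) := \prod_{j=1}^n \lambda_j^{m_j}(t)$, whose limit as $t\to 0$ is the claimed element, and then invoke Lemma \ref{lemma:ElementVsSpace}.

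First I would use that each $X_\alpha$ is a weight vector for $T$ with character $\chi_\alpha$, so
$$
\Ad(d(t))(X_\alpha)=\chi_\alpha(d(t))\,X_\alpha=t^{(\m,\alpha)}X_\alpha.
$$
Applying $\Ad(d(t))$ to $\Gamma\in\fka$ therefore yields
$$
\Ad(d(t))(\Gamma)=\sum_{\alpha\in\Delta^+}a_\alpha t^{(\m,\alpha)}X_\alpha \in \Ad(d(t))(\fka).
$$

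Next, for each $\alpha$ with $(\m,\alpha)<c$ and $a_\alpha\neq 0$, the hypothesis gives $\fkg_\alpha\subseteq\fka$, so $X_\alpha\in\fka$. Because $\Ad(d(t))(\fka)$ is a linear subspace containing the nonzero scalar multiple $\Ad(d(t))(X_\alpha)=t^{(\m,\alpha)}X_\alpha$, it contains $X_\alpha$ itself for every $t\neq 0$. Hence the low-order correction $\sum_{(\m,\alpha)<c}a_\alpha t^{(\m,\alpha)}X_\alpha$ lies in $\Ad(d(t))(\fka)$, and so does
$$
A_t:=t^{-c}\Bigl[\Ad(d(t))(\Gamma)-\sum_{(\m,\alpha)<c}a_\alpha t^{(\m,\alpha)}X_\alpha\Bigr]=\sum_{(\m,\alpha)=c}a_\alpha X_\alpha+\sum_{(\m,\alpha)>c}a_\alpha t^{(\m,\alpha)-c}X_\alpha.
$$

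Finally, since every exponent $(\m,\alpha)-c$ appearing in the second sum is positive, I would conclude that $\lim_{t\to 0}A_t=\sum_{(\m,\alpha)=c}a_\alpha X_\alpha$. If this limit vanishes, the conclusion is trivial; otherwise Lemma \ref{lemma:ElementVsSpace}, applied to the morphisms $t\mapsto \Ad(d(t))(\fka)$ and $t\mapsto A_t$, delivers the statement. The only real subtlety is recognizing that the hypothesis on $\fkg_\alpha$ for $(\m,\alpha)<c$ is exactly what allows the offending low-order terms to be subtracted while remaining in $\Ad(d(t))(\fka)$; once that observation is made, the proof is a direct computation.
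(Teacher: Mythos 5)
Your proposal is correct and follows essentially the same route as the paper: subtract the low-order terms (which lie in $\fka$ by hypothesis), rescale by $t^{-c}$, and apply Lemma \ref{lemma:ElementVsSpace}; your element $A_t$ coincides with the paper's $\Ad(\prod_j\lambda_j^{m_j}(t))(t^{-c}(\Gamma-\sum_{(\m,\alpha)<c}a_\alpha X_\alpha))$, the only cosmetic difference being that you perform the subtraction after applying $\Ad$ rather than before. Your extra remark handling the case where the limit vanishes is a small point of care the paper leaves implicit.
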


\begin{proof}
We have
$$
\Ad(\prod_{j=1}^n \lambda_j^{m_j}(t))(\Gamma)
=
\sum_{\alpha\in \Delta^+}a_\alpha t^{(\m, \alpha)}X_\alpha.
$$
Hence
$$
\Ad(\prod_{j=1}^n \lambda_j^{m_j}(t))
(t^{-c}(\Gamma-\sum_{(\m,\alpha)<c}a_\alpha X_\alpha))
=
\sum_{(\m, \alpha)=c}a_\alpha X_\alpha + o(1).
$$
By Lemma \ref{lemma:ElementVsSpace}, we have
$$
\sum_{(\m, \alpha)=c}a_\alpha X_\alpha
\in 
\lim_{t\to 0}
\Ad(\prod_{j=1}^n \lambda_j^{m_j}(t))(\fka).
$$
\end{proof}

\section{Case $\fks\fkl(n+1,\C)$}

In this section, let $\fkg =\fks\fkl(n+1,\C)$,
and $\fkb$ the Lie subalgebra of upper triangular matrices.

Let $\fka$ be an $n$-dimensional abelian ideal of
$\fkb$.
Then by \eqref{eqn:DeltaA}
there exists a Young diagram
$\mu=(\mu_1\geq \mu_2\geq \cdots\geq \mu_l)$
with $|\mu|:=\sum_{i=1}^l\mu_i=n$ ($\mu\vdash n$)
such that
$$
\fka=\fka_\mu:=
\bigoplus_{k=1}^l\bigoplus_{j=1}^{\mu_k}
\C E_{j,\, n-k +2}.
$$

\begin{example}
Let $\mu=(\mu_1\geq \mu_2\geq \mu_3)=(4,4,1)$ and $n=9$.
Then the weight spaces of $\fka_\mu$ are the following places:
$$
\begin{tabular}{ccccccc}
\cline{2-7}
\multicolumn{1}{r}{1} & \multicolumn{1}{|c|}{}
& \multicolumn{1}{|c|}{}
& \multicolumn{1}{|c|}{}
& \multicolumn{1}{|c|}{$\bullet$}
& \multicolumn{1}{|c|}{$\bullet$}
& \multicolumn{1}{|c|}{$\bullet$}\\
\cline{2-7}
\multicolumn{1}{r}{2} & \multicolumn{1}{|c|}{}
& \multicolumn{1}{|c|}{}
& \multicolumn{1}{|c|}{}
& \multicolumn{1}{|c|}{}
& \multicolumn{1}{|c|}{$\bullet$}
& \multicolumn{1}{|c|}{$\bullet$}\\
\cline{2-7}
\multicolumn{1}{r}{3} & \multicolumn{1}{|c|}{}
& \multicolumn{1}{|c|}{}
& \multicolumn{1}{|c|}{}
& \multicolumn{1}{|c|}{}
& \multicolumn{1}{|c|}{$\bullet$}
& \multicolumn{1}{|c|}{$\bullet$}\\
\cline{2-7}
\multicolumn{1}{r}{4} & \multicolumn{1}{|c|}{}
& \multicolumn{1}{|c|}{}
& \multicolumn{1}{|c|}{}
& \multicolumn{1}{|c|}{}
& \multicolumn{1}{|c|}{$\bullet$}
& \multicolumn{1}{|c|}{$\bullet$}\\
\cline{2-7}
 & 5 & 6 & 7 & 8 & 9 & 10\\
\end{tabular}
$$
in the upper right block of size 
$\mu_1\times (n+1-\mu_1)=4\times 6$ of a square matrix 
of degree $n+1=10$.
\end{example}

Besides $\fka_\mu$,
we define another abelian Lie subalgebra $\fka_{\mu}'$
in the upper right block of size $\mu_1\times (n+1-\mu_1)$ 
by
$$
\fka_\mu' :=
\bigoplus_{k=1}^l\bigoplus_{j=1}^{\mu_k}
\C E_{\mu_1+1-j,\, \mu_1+\sum_{i> k}\mu_i +1}.
$$

\begin{example}
Let $\mu=(\mu_1\geq \mu_2\geq \mu_3)=(4,4,1)$ and $n=9$.
Then the weight spaces of $\fka_\mu'$ are the following places:
$$
\begin{tabular}{ccccccc}
\cline{2-7}
\multicolumn{1}{r}{1} & \multicolumn{1}{|c|}{}
& \multicolumn{1}{|c|}{$\bullet$}
& \multicolumn{1}{|c|}{}
& \multicolumn{1}{|c|}{}
& \multicolumn{1}{|c|}{}
& \multicolumn{1}{|c|}{$\bullet$}\\
\cline{2-7}
\multicolumn{1}{r}{2} & \multicolumn{1}{|c|}{}
& \multicolumn{1}{|c|}{$\bullet$}
& \multicolumn{1}{|c|}{}
& \multicolumn{1}{|c|}{}
& \multicolumn{1}{|c|}{}
& \multicolumn{1}{|c|}{$\bullet$}\\
\cline{2-7}
\multicolumn{1}{r}{3} & \multicolumn{1}{|c|}{}
& \multicolumn{1}{|c|}{$\bullet$}
& \multicolumn{1}{|c|}{}
& \multicolumn{1}{|c|}{}
& \multicolumn{1}{|c|}{}
& \multicolumn{1}{|c|}{$\bullet$}\\
\cline{2-7}
\multicolumn{1}{r}{4} & \multicolumn{1}{|c|}{$\bullet$}
& \multicolumn{1}{|c|}{$\bullet$}
& \multicolumn{1}{|c|}{}
& \multicolumn{1}{|c|}{}
& \multicolumn{1}{|c|}{}
& \multicolumn{1}{|c|}{$\bullet$}\\
\cline{2-7}
 & 5 & 6 & 7 & 8 & 9 & 10\\
\end{tabular}\quad .
$$
\end{example}

\begin{remark}
\label{rem:MuandMu'}
$\fka_\mu'$ and $\fka_\mu$ are conjugate to each other by 
$$
\begin{bmatrix}
P_\sigma & O\\
O & P_\tau
\end{bmatrix},
$$
where $P_\sigma$ and $P_\tau$ are respectively
the
permutation matrices corresponding to
\begin{eqnarray*}
\sigma\!&=&\!
\begin{pmatrix}
1 & 2 & \cdots & \mu_1 \\
\mu_1 & \mu_1-1 & \cdots & 1
\end{pmatrix} \quad \text{and}\\
\tau\!&=&\!
\begin{pmatrix}
\mu_1+1 &\! \cdots &\! \mu_1+\sum_{i>k}\mu_i +1 &\! \cdots &\! \sum_{i=1}^l \mu_i+1=n+1\\
n-l+2 & \cdots & n-k+2 & \cdots & n+1
\end{pmatrix}.
\end{eqnarray*}
Hence the statements $\fka_\mu\in \overline{\Ad(G)J}$ and
$\fka'_\mu\in \overline{\Ad(G)J}$ are equivalent.
\end{remark}

\begin{lemma}
\label{lem:i(h)}
For each $h=1,2,\ldots, n$, there exists a unique 
$i(h)$ such that $E_{i(h), i(h)+h}\in \fka'_\mu$. Explicitly,
\begin{equation}
\label{eqn:i(h)}
i(h)=\mu_1+1-(h-\sum_{i>k}\mu_i),
\end{equation}
or equivalently
\begin{equation}
\label{eqn:i(h)2}
i(h)+h=\mu_1+\sum_{i>k}\mu_i +1
\end{equation}
with $k$ satisfying $\sum_{i>k}\mu_i < h\leq \sum_{i\geq k}\mu_i$.
Note that $i(h)\leq \mu_1< i(h)+h$.
We have
$$
\fka_\mu' 
=\bigoplus_{h=1}^{n}\C E_{i(h),i(h)+h}.
$$
\end{lemma}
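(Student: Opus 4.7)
The plan is to read off the claim directly from the definition of $\fka'_\mu$, so the work is essentially combinatorial bookkeeping on the index set. Recall that $\fka'_\mu$ is spanned, as $(k,j)$ ranges over $1\leq k\leq l$ and $1\leq j\leq \mu_k$, by the matrix units $E_{\mu_1+1-j,\; \mu_1+\sum_{i>k}\mu_i+1}$. First I would compute the height of each such basis element, namely
$$
h(k,j)=\bigl(\mu_1+\sum_{i>k}\mu_i+1\bigr)-\bigl(\mu_1+1-j\bigr)=j+\sum_{i>k}\mu_i.
$$
This immediately identifies the height of each weight vector in $\fka'_\mu$ in terms of $(k,j)$.

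Next I would show that the map $(k,j)\mapsto h(k,j)$ is a bijection from $\{(k,j)\mid 1\leq k\leq l,\ 1\leq j\leq \mu_k\}$ onto $\{1,2,\ldots,n\}$. For a fixed $k$, as $j$ runs through $1,\ldots,\mu_k$, $h(k,j)$ runs through the interval $\bigl[\sum_{i>k}\mu_i+1,\ \sum_{i\geq k}\mu_i\bigr]$, and since $\sum_{i>l}\mu_i=0$ and $\sum_{i\geq 1}\mu_i=n$, these intervals for $k=l,l-1,\ldots,1$ partition $\{1,\ldots,n\}$. This gives existence and uniqueness of $(k,j)$ with $h(k,j)=h$ for each $h$, hence the existence and uniqueness of $i(h)$, together with the characterization of the corresponding $k$ by $\sum_{i>k}\mu_i<h\leq \sum_{i\geq k}\mu_i$ and the formula $j=h-\sum_{i>k}\mu_i$. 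Substituting into $i(h)=\mu_1+1-j$ yields \eqref{eqn:i(h)}, and adding $h$ to both sides yields \eqref{eqn:i(h)2}.

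Finally I would verify the inequality $i(h)\leq \mu_1<i(h)+h$: the left inequality is immediate from $j\geq 1$, and the right is immediate from $\sum_{i>k}\mu_i\geq 0$ in \eqref{eqn:i(h)2}. The decomposition $\fka'_\mu=\bigoplus_{h=1}^n \C E_{i(h),i(h)+h}$ is then just a relabeling of the original direct sum defining $\fka'_\mu$ using the bijection established above. There is no real obstacle here; the only care needed is to keep the two indexings (by $(k,j)$ and by $h$) straight and to make sure the boundary cases $k=1$ and $k=l$ behave correctly under the conventions $\sum_{i>l}\mu_i=0$ and $\sum_{i\geq 1}\mu_i=n$.
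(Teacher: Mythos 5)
Your proposal is correct and follows essentially the same route as the paper's proof: both compute the difference of the column and row indices of each basis element $E_{\mu_1+1-j,\,\mu_1+\sum_{i>k}\mu_i+1}$ to get $h=j+\sum_{i>k}\mu_i$ and observe that these values bijectively cover $\{1,\ldots,n\}$ as $(k,j)$ varies. You are slightly more explicit about the interval partition and the boundary inequalities, but the argument is the same.
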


\begin{proof}
A weight of $\fka'_\mu$ 
corresponds to 
a place $(\mu_1+1-j, \mu_1+\sum_{i> k}\mu_i +1)$.
Then
its difference of components equals
$$
(\mu_1+\sum_{i> k}\mu_i +1)-(\mu_1+1-j)=\sum_{i> k}\mu_i +j.
$$
As $j$ runs over $[ 1, \mu_k]$, 
they are all different, and they cover $\{ 1,2,\ldots, n\}$.

When $h=\sum_{i> k}\mu_i +j$, we have
$$
i(h)=\mu_1+1-j=\mu_1+1-(h-\sum_{i> k}\mu_i).
$$
\end{proof}

For a vector $(z_1,z_2,\ldots,z_{n})\in \Q^{n}$ and $h,j =1,2,\ldots,n$
with $j+h\leq n+1$, put
$$
z_j(h):=\sum_{i=j}^{j+h-1}z_i.
$$

For $\mu\vdash n$, we consider the following system of inequalities:

\begin{equation}
\label{eqn:mu}
\tag{$IE_\mu$}
\left\{
\begin{array}{ll}
 z_{i(h)}(h)< z_j(h) & (1\leq h\leq n,\, j\leq n+1-h,\, j\neq i(h)),
\\
z_i> 0 & (1\leq i\leq n,\, i\neq \mu_1),
\\
z_{\mu_1}=0. &
\end{array}
\right.
\end{equation}

We give a proof of the following proposition in Appendix A:

\begin{proposition}
\label{prop:A:Inequality}
For any $\mu\vdash n$, there exists a solution of 
the system $(IE_\mu)$ in $\Z^{n}$.
\end{proposition}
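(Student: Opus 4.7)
The plan is to construct an explicit integer solution via a layered recipe indexed by the parts of $\mu$.

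First, I would rewrite the window sums via partial sums $S_j:=\sum_{i\leq j}z_i$ (with $S_0:=0$), so that $z_j(h)=S_{j+h-1}-S_{j-1}$. The system $(IE_\mu)$ then asks, for each $h$, that the pair $(a,b)=(i(h)-1,i(h)+h-1)$ be the unique minimizer of $S_b-S_a$ among pairs with $b-a=h$. Introducing the shifted coordinates $u_p:=S_{\mu_1-1}-S_{\mu_1-1-p}$ (for $0\leq p\leq\mu_1-1$) and $v_q:=S_{\mu_1+q}-S_{\mu_1}$ (for $0\leq q\leq n-\mu_1$), windows containing $\mu_1$ have sum $u_p+v_q$ with $p+q=h-1$; a direct computation from \eqref{eqn:i(h)} identifies the desired minimizer on the anti-diagonal $p+q=h-1$ as $(P(h),Q(h))=(h-1-N_k,\,N_k)$, where $N_k:=\sum_{i>k}\mu_i$ and $k$ is determined by $N_k\leq h-1<N_{k-1}$. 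The collection $\{(P(h),Q(h))\}_{h=1}^n$ is precisely the Young diagram of $\mu$ embedded in $[0,\mu_1-1]\times[0,N_1]$, with the $k$-th row sitting at height $N_k$ and having width $\mu_k$.

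Second, I would construct $\z$ by a layered scheme. Fix a sufficiently large integer $M$. Set $z_{\mu_1}:=0$ and $z_i:=M$ for $1\leq i<\mu_1$. For the right side, within each row $k\in\{2,\ldots,l\}$ (i.e., $q\in(N_k,N_{k-1}]$), assign $z_{\mu_1+q}$ a value whose scale depends on $k$ and the within-row index $t=q-N_k$: the leading entry (at $t=1$) should dominate, the remaining entries should decay geometrically, and the total sum over row $k$ should be comparable to $\mu_k\cdot M$. The specific exponents are tuned so that on each anti-diagonal of the $(p,q)$-grid the sum $u_p+v_q$ attains its unique minimum at $(P(h),Q(h))$. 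Positivity of $z_i$ for $i\neq\mu_1$ and integrality are immediate; if needed, a uniform scaling delivers a strictly integer solution.

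Third, I would verify the inequalities case by case. Given $h$ and $j\neq i(h)$, after cancelling the indices common to $A_h:=\{i(h),\ldots,i(h)+h-1\}$ and $B_{h,j}:=\{j,\ldots,j+h-1\}$, the required $z_{i(h)}(h)<z_j(h)$ reduces to $\sum_{A_h\setminus B_{h,j}}z_i<\sum_{B_{h,j}\setminus A_h}z_i$. Each symmetric difference is a contiguous block (possibly one on each side of $\mu_1$), and under the layered construction its $z$-sum is dominated by a single term, reducing each check to the comparison of two exponents. Leftward deviations ($j<i(h)$) pit a right-hand tail against a left-hand block of constants, while rightward deviations ($j>i(h)$) pit a left-hand prefix against a new right-hand row-leader; the scale hierarchy, tied to the thresholds $N_k$, places the dominant term on the correct side of the inequality in each case.

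The main obstacle is the bookkeeping required to cover all $O(n^2)$ inequalities simultaneously. The critical tension is that leftward deviations demand the right-side row-tails be small, whereas rightward deviations demand the right-side row-leaders be large; these are reconciled only because the scale transitions in $\z$ are placed precisely at the row thresholds $q=N_k$ of the Young diagram of $\mu$, which is exactly the content of the identification $\{(P(h),Q(h))\}=\mu$ from the first step. Once every inequality has been verified, Proposition \ref{prop:A:Inequality} follows.
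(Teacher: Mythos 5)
Your reduction to the $(u_p,v_q)$-coordinates and the identification of the required minimizers $(P(h),Q(h))=(h-1-N_k,N_k)$ with the rows of $\mu$ are correct, but the construction you then propose does not satisfy $(IE_\mu)$, and the gap is not a matter of bookkeeping. The one concrete prescription you give for the right-hand side --- within each row the leading entry dominates and the remaining entries decay geometrically, with row total comparable to $\mu_k M$ --- is inconsistent with the inequalities coming from competing windows that lie \emph{entirely to the right of} $\mu_1$ and avoid the row leaders; these windows do not appear in your step-3 case analysis, which only treats deviations measured by the sign of $j-i(h)$ and tacitly assumes the competitor still straddles position $\mu_1$. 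Concretely, take $\mu=(3,3)$, $n=6$, so $z=(M,M,0,z_4,z_5,z_6)$. For $h=2$ the designated window is $[2,3]$ with sum $M$, and $j=5$ is an admissible competitor, so $(IE_\mu)$ forces $z_5+z_6>M$; more generally every length-$t$ tail of a right-hand row must exceed roughly $(t-1)M$ plus the deficit of the rows below it, which forces \emph{every} entry of the row to be of order $M$, not geometrically small. (The paper's own solution here is $(2,2,0,3,2,1)$: the right-hand row is $M+1,\,M,\,M-1$, i.e.\ the constant $M$ plus a small linear correction, the exact opposite of a dominant leader with a decaying tail.) Your closing remark correctly senses the tension between the leftward constraints (tails must average strictly below $M$) and the rightward ones (prefixes must average strictly above $M$), but a geometric profile cannot thread it; the admissible profiles are perturbations of the constant $M$ of size $O(1)$, and you never exhibit one or verify the $O(n^2)$ inequalities for it.

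For comparison, the paper avoids designing a global profile altogether: it proves Proposition \ref{prop:A:Inequality} by induction on $n$, peeling off one cell of the first row when $\mu_1>\mu_2$ (inserting a new huge entry $z_1=\sum_j z_j'$ in front of a solution for $\mu'=(\mu_1-1,\mu_2,\dots)$) and peeling off the whole first part when $\mu_1=\mu_2$ (splicing a solution for $(\mu_2,\dots,\mu_l)$ around a new large entry at position $n+1-m$, with a $\delta$-perturbation in the overlapping case $n+1<3m$). Each inductive step only requires re-checking the inequalities involving the newly inserted entries, which is what makes the verification tractable. If you want to pursue a direct construction, you would need to replace the geometric profile by an explicit perturbation of the constant vector $M\cdot(1,\dots,1)$ whose window sums detect the row boundaries $N_k$, and then genuinely verify all windows, including those disjoint from position $\mu_1$.
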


Let $z=(z_1,\ldots, z_{n})\in \Z^{n}$ be a solution of 
the system $(IE_\mu)$.
Since  $(n+1)z=((n+1)z_1,\ldots, (n+1)z_{n})$ also satisfies 
 $(IE_\mu)$, we may assume $z_j\in (n+1)\Z$ for all $j$.

Define $w=(w_1,\ldots, w_{n+1})\in \Z^{n+1}$ by
\begin{equation}
\label{eqn:wj}
w_j:=\frac{\sum_{k=j}^{n}(n+1-k)z_k-\sum_{k=1}^{j-1}kz_k}{n+1} \qquad (j=1,\ldots, n+1).
\end{equation}
Then $\sum_{j=1}^{n+1}w_j=0$ and
\begin{equation}
\label{eqn:w-w=z}
w_j-w_{j+h}=\sum_{k=j}^{j+h-1}z_k=z_j(h).
\end{equation}

\begin{proposition}
\label{prop:Limit'}
Let $w\in \Z^{n+1}$ be the one defined in \eqref{eqn:wj}, and
let $t^w:=\diag( t^{w_1}, t^{w_2},\ldots , t^{w_{n+1}})\in SL(n+1, \C)$.
Then
$$
\lim_{t\to 0}\Ad(t^w)J=\fka_\mu'.
$$
\end{proposition}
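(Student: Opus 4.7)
The plan is to transform the basis $\{\Lambda^h : 1 \leq h \leq n\}$ of $J$ under $\Ad(t^w)$ and read off each limit direction explicitly. First I would note that $\Lambda^h = \sum_{j=1}^{n+1-h} E_{j, j+h}$, since $\Lambda$ simply shifts standard basis vectors. Because $t^w$ is diagonal, it acts on matrix units by
$$
\Ad(t^w)(E_{j, j+h}) = t^{w_j - w_{j+h}} E_{j, j+h} = t^{z_j(h)}\, E_{j, j+h},
$$
using the identity \eqref{eqn:w-w=z}. Hence
$$
\Ad(t^w)(\Lambda^h) = \sum_{j=1}^{n+1-h} t^{z_j(h)}\, E_{j, j+h}.
$$

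Next I would invoke the first family of inequalities in $(IE_\mu)$, namely $z_{i(h)}(h) < z_j(h)$ for all $j \neq i(h)$ with $j \leq n+1-h$. These say exactly that among the exponents appearing in $\Ad(t^w)(\Lambda^h)$, the minimum is uniquely attained at $j = i(h)$. Rescaling by $t^{-z_{i(h)}(h)}$ yields
$$
t^{-z_{i(h)}(h)}\, \Ad(t^w)(\Lambda^h) = E_{i(h), i(h)+h} + \sum_{j \neq i(h)} t^{\,z_j(h) - z_{i(h)}(h)}\, E_{j, j+h},
$$
and every exponent in the sum on the right is strictly positive, so the limit as $t \to 0$ is $E_{i(h), i(h)+h}$. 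Since this element is a limit of elements sitting inside $\Ad(t^w)(J)$ for each $t \neq 0$, Lemma \ref{lemma:ElementVsSpace} gives $E_{i(h), i(h)+h} \in \lim_{t \to 0} \Ad(t^w)(J)$.

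To finish I would apply Lemma \ref{lem:i(h)}, which shows $\fka_\mu' = \bigoplus_{h=1}^n \C E_{i(h), i(h)+h}$. Consequently $\fka_\mu' \subseteq \lim_{t \to 0} \Ad(t^w)(J)$. Since both subspaces are $n$-dimensional (the limit lives in $\Gr(n,\fkg)$), equality follows.

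There is essentially no obstacle left at this stage: the hard analytic content has been packaged into the system $(IE_\mu)$, whose solvability is guaranteed by Proposition \ref{prop:A:Inequality}, and the divisibility normalization $z_j \in (n+1)\Z$ only serves to make the $w_j$ defined in \eqref{eqn:wj} integral so that $t^w$ is a genuine one-parameter subgroup of $SL(n+1,\C)$. The remaining conditions $z_i > 0$ (for $i \neq \mu_1$) and $z_{\mu_1}=0$ play no role in the limit computation itself and would be invoked only if one needed to verify consistency of the $w_j$'s or to identify the 1-parameter subgroup with a cocharacter; the step I would double-check is that the normalization by $(n+1)$ indeed yields $w_j \in \Z$ with $\sum_j w_j = 0$, which is immediate from \eqref{eqn:wj}.
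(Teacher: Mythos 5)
Your proof is correct and follows essentially the same route as the paper: diagonal conjugation scales $E_{j,j+h}$ by $t^{z_j(h)}$, the first family of inequalities in $(IE_\mu)$ singles out $j=i(h)$ as the unique minimal exponent, and Lemma \ref{lemma:ElementVsSpace} together with Lemma \ref{lem:i(h)} and a dimension count gives $\lim_{t\to 0}\Ad(t^w)J=\fka_\mu'$. The paper's proof is exactly this computation, only written more tersely.
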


\begin{proof}
Recall that $\Lambda:=\sum_{i=1}^{n}E_{i,i+1}$,
$\Lambda^h=\sum_{i=1}^{n+1-h}E_{i,i+h}$, and
$J=\bigoplus_{h=1}^{n}\C \Lambda^h$.
We have
$$
\lim_{t\to 0}\Ad(t^w)t^{-z_{i(h)}(h)}\Lambda^h
=\lim_{t\to 0}\sum_{j=1}^{n+1-h}t^{z_j(h)-z_{i(h)}(h)}E_{j,j+h}
=E_{i(h),i(h)+h}.
$$
Hence by Lemma \ref{lemma:ElementVsSpace}
$\lim_{t\to 0}\Ad(t^w)J=\fka_\mu'$.
\end{proof}

\begin{proof}[Proof of Theorem \ref{MainTheorem}]
Recall that we may take $K=J$
in the case of $\fks\fkl(n+1,\C)$.
For an $n$-dimensional abelian ideal $\fka$ of the Lie algebra
of upper triangular matrices in $\fks\fkl(n+1,\C)$,
there exists 
$\mu\vdash n$ such that $\fka=\fka_\mu$.
By Remark \ref{rem:MuandMu'} and Proposition
\ref{prop:Limit'},
$$
\fka_\mu
\in \overline{\Ad(G)J}.
$$
\end{proof}

\section{Main theorem for Types $B, C, D$}

Let $\fkg$ be a simple Lie algebra of Type B, C, or D.
Let $\alpha_1,\ldots,\alpha_n$ be the simple roots in $\Delta^+$;
we follow Bourbaki's notation \cite{Bourbaki}:
\begin{eqnarray*}
(B_n) & & \alpha_i=\varepsilon_i-\varepsilon_{i+1}\quad (i<n),\quad \alpha_n=\varepsilon_n,\\
&&
\Delta^+=\{ \varepsilon_i, \varepsilon_i-\varepsilon_j, \varepsilon_i+\varepsilon_j\,|\, i<j\};\\
(C_n) & & \alpha_i=\varepsilon_i-\varepsilon_{i+1}\quad (i<n),\quad \alpha_n=2\varepsilon_n,\\
&&
\Delta^+=\{ 2\varepsilon_i, \varepsilon_i-\varepsilon_j, \varepsilon_i+\varepsilon_j\,|\, i<j\};\\
(D_n) & & \alpha_i=\varepsilon_i-\varepsilon_{i+1}\quad (i<n),\quad 
\alpha_n=\varepsilon_{n-1}+\varepsilon_n,\\
&&
\Delta^+=\{ \varepsilon_i-\varepsilon_j, \varepsilon_i+\varepsilon_j\,|\, i<j\}.\end{eqnarray*}

Let $\fkb$ be the Borel subalgebra 
corresponding to $\Delta^+$, and
$\fka$ an $n$-dimensional abelian ideal of $\fkb$.
Recall that $\fka$ satisfies \eqref{eqn:DeltaA}.

\begin{lemma}
\label{lemma:RootsBCD}
The set $\Delta(\fka)$ consists of roots of form $\varepsilon_i+\varepsilon_j$
except 
\begin{enumerate}
\item $\Delta(\fka)=\{ \varepsilon_1, \varepsilon_1+\varepsilon_j\,|\, j\geq 2\}$ in Type $B_n$,
\item
$\Delta(\fka)=\{ \varepsilon_1-\varepsilon_n, \varepsilon_1+\varepsilon_j\,|\, j\geq 2\}$ in Type $D_n$,
\item
$\Delta(\fka)=\{ \varepsilon_2+\varepsilon_3, \varepsilon_1-\varepsilon_n, \varepsilon_1+\varepsilon_j\,|\, n>j\geq 2\}$ in Type $D_n$.
\end{enumerate}
\end{lemma}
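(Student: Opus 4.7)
My strategy is a case-by-case analysis: assume $\Delta(\fka)$ contains a root $\gamma$ not of the form $\varepsilon_i + \varepsilon_j$ (allowing $i = j$ in type $C$, where it produces $2\varepsilon_i$), and use the ideal condition \eqref{eqn:DeltaA} to force further roots, combined with the abelian condition and the constraint $\dim \fka = n$ to pin down $\Delta(\fka)$. The non-$(\varepsilon_i + \varepsilon_j)$ candidates for $\gamma$ are $\varepsilon_i$ (only in type $B$) and $\varepsilon_i - \varepsilon_j$ with $i < j$. In either case, iterating the ideal closure by adding $\varepsilon_k - \varepsilon_i \in \Delta^+$ for $k < i$ forces the $i = 1$ analogue of $\gamma$ into $\Delta(\fka)$; and in type $B$, additionally adding $\varepsilon_j \in \Delta^+$ to $\varepsilon_1 - \varepsilon_j$ gives $\varepsilon_1 \in \Delta(\fka)$. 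So after reduction, my working hypothesis is $\varepsilon_1 \in \Delta(\fka)$ in type $B$, and $\varepsilon_1 - \varepsilon_j \in \Delta(\fka)$ for some $j \in \{2,\ldots,n\}$ in types $C$ and $D$.

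Types $B$ and $C$ are then quick. In type $B$, ideal closure of $\varepsilon_1$ with each positive root $\varepsilon_k$ ($k \geq 2$) yields $\varepsilon_1 + \varepsilon_k \in \Delta(\fka)$; together with $\varepsilon_1$ these are $n$ roots, so they exhaust $\Delta(\fka)$ and match exception (1). A parallel count rules out $\varepsilon_i \in \Delta(\fka)$ for $i > 1$. In type $C$, adding the positive root $2\varepsilon_j$ to $\varepsilon_1 - \varepsilon_j$ produces $\varepsilon_1 + \varepsilon_j \in \Delta(\fka)$, and then $(\varepsilon_1 - \varepsilon_j) + (\varepsilon_1 + \varepsilon_j) = 2\varepsilon_1 \in \Delta$ directly violates the abelian condition, so type $C$ admits no exception.

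The substantial work is type $D$. From $\varepsilon_1 - \varepsilon_j$, ideal closure with $\varepsilon_j - \varepsilon_k \in \Delta^+$ ($k > j$) gives $\varepsilon_1 - \varepsilon_k \in \Delta(\fka)$ for all $k \geq j$, and closure with $\varepsilon_k + \varepsilon_j \in \Delta^+$ ($k \geq 2$, $k \neq j$) gives $\varepsilon_1 + \varepsilon_k \in \Delta(\fka)$ for $k \in \{2,\ldots,n\} \setminus \{j\}$. If $j < n$, a further step (adding the positive root $\varepsilon_j - \varepsilon_n$ to $\varepsilon_1 + \varepsilon_n$) recovers the missing $\varepsilon_1 + \varepsilon_j$, producing $(n - j + 1) + (n - 1) = 2n - j > n$ roots, a contradiction. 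Hence $j = n$, and the forced content is $\{\varepsilon_1 - \varepsilon_n\} \cup \{\varepsilon_1 + \varepsilon_k : 2 \leq k \leq n-1\}$, which is $n - 1$ roots; exactly one further root $\alpha$ is needed.

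To finish, I classify admissible $\alpha$. The abelian condition against $\varepsilon_1 - \varepsilon_n$ excludes $\alpha = \varepsilon_i + \varepsilon_n$ for $i \geq 2$ (the sum $\varepsilon_1 + \varepsilon_i$ is then a root), and the reductions at the start exclude any $\alpha = \varepsilon_i - \varepsilon_j$. This leaves two candidates: $\alpha = \varepsilon_1 + \varepsilon_n$, whose upward closure is $\{\alpha\}$ and which is abelian-compatible because $(\varepsilon_1 - \varepsilon_n) + (\varepsilon_1 + \varepsilon_n) = 2\varepsilon_1 \notin \Delta$ in type $D$, giving exception (2); and $\alpha = \varepsilon_i + \varepsilon_j$ with $2 \leq i < j \leq n-1$, whose ideal closure adds $\varepsilon_{i'} + \varepsilon_{j'}$ for all $1 \leq i' \leq i$ and $i' < j' \leq j$ not already in the set, so the number of new roots is minimized (to exactly one) precisely when $(i,j) = (2,3)$, giving exception (3). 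I expect this last enumeration of admissible $\alpha$ in type $D$ to be the main obstacle, since it requires careful book-keeping of both upward closures and abelian incompatibilities for each possible form of $\alpha$.
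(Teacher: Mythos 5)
Your argument is correct, and it reaches the same three exceptional sets by the same underlying principle (upward closure of $\Delta(\fka)$ under addition of positive roots, played off against $|\Delta(\fka)|=n$), but the mechanism is different from the paper's. The paper's proof rests on a single observation: since the saturated chain from any $\alpha\in\Delta(\fka)$ up to $\gamma_0$ lies entirely in $\Delta(\fka)$, every root of $\Delta(\fka)$ has height at least $\hite(\gamma_0)-n+1$, i.e.\ at least $n$, $n$, $n-2$ in types $B$, $C$, $D$. This disposes of types $B$ and $C$ in one line (in type $C$ every $\varepsilon_i-\varepsilon_j$ has height $\le n-1$, so none can occur, with no appeal to commutators), and reduces type $D$ to inspecting the few low-height roots not of the form $\varepsilon_i+\varepsilon_j$, after which the same counting you use identifies $\varepsilon_1+\varepsilon_n$ and $\varepsilon_2+\varepsilon_3$ as the only admissible extra roots. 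You instead compute explicit upward closures of each candidate bad root and twice invoke the abelian condition (via $2\varepsilon_1\in\Delta$ in type $C$, and via $\varepsilon_1+\varepsilon_i\in\Delta$ to exclude $\varepsilon_i+\varepsilon_n$ in type $D$) where the paper gets by on counting alone. Your route costs more bookkeeping — e.g.\ the exclusion of $\alpha=\varepsilon_i-\varepsilon_n$ with $i\ge 2$ as the ``one further root'' needs the extra half-line that its closure also produces some $\varepsilon_i+\varepsilon_k$ with $k\ge 2$, which ``the reductions at the start'' do not quite cover by themselves — but it buys a self-contained verification that needs no height formulas. The height bound is worth internalizing, though: the paper reuses it repeatedly in the rest of Section~5.
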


\begin{proof}
Since the heights of the maximal roots of Types $B, C$, and $D$
are $2n-1, 2n-1$, and $2n-3$, respectively,
the heights of roots in $\Delta(\fka)$
are greater than or equal to
$n, n$, and $n-2$, respectively.
Thus we see the assertion for Types $B$ and $C$.

In Type $D$, the roots with height greater than $n-1$
are of form $\varepsilon_i+\varepsilon_j$, and
$\varepsilon_1-\varepsilon_n$ is the unique root not of form $\varepsilon_i+\varepsilon_j$
with height $n-1$.
Note that $\varepsilon_i-\varepsilon_j\in \Delta(\fka)$
leads to $\varepsilon_1-\varepsilon_n\in \Delta(\fka)$ by \eqref{eqn:DeltaA}.
Hence, if a root of form $\varepsilon_i-\varepsilon_j$
belongs to $\Delta(\fka)$, 
then it contains $\{ \varepsilon_1-\varepsilon_n, \varepsilon_1+\varepsilon_j\,|\, n>j\geq 2\}$.
If the remaining root of $\Delta(\fka)$ is not 
$\varepsilon_1+\varepsilon_n$ or
$\varepsilon_2+\varepsilon_3$, 
then $\Delta(\fka)$ must contain another root
since it is closed under the addition by the positive roots,
which contradicts $|\Delta(\fka)|=n$.
\end{proof}

We first consider the exceptional cases 
appearing in Lemma \ref{lemma:RootsBCD}.

\begin{proposition}
\label{prop:BCDexceptional}
Let $\Delta(\fka)$ be one of the following:
\begin{enumerate}
\item $\{ \varepsilon_1, \varepsilon_1+\varepsilon_j\,|\, j\geq 2\}$ in Type $B_n$,
\item
$\{ \varepsilon_1-\varepsilon_n, \varepsilon_1+\varepsilon_j\,|\, j\geq 2\}$ in Type $D_n$,
\item
$\{ \varepsilon_2+\varepsilon_3, \varepsilon_1-\varepsilon_n, \varepsilon_1+\varepsilon_j\,|\, n>j\geq 2\}$ in Type $D_n$ $(n\geq 5)$.
\item
Any in Type $D_4$.
\end{enumerate}
Then Theorem \ref{MainTheorem} holds, i.e.,
$$
\fka\in \overline{\Ad(G)K}.
$$
\end{proposition}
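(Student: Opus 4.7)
The plan is, for each of the four cases, to construct an explicit one-parameter deformation $g(t):\C^\times\to G$ with $\lim_{t\to 0}\Ad(g(t))(K)=\fka$ in $\Gr(n,\fkg)$, using Lemmas \ref{lemma:ElementVsSpace}, \ref{UnipotentDeformation}, and \ref{ToricDeformation}.

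For cases (1) and (2) I would use the single toric deformation $g(t)=\lambda_1(t)^{-1}$, so that $(\m,\alpha)=-d_1(\alpha)$ with $d_1(\alpha)$ the $\alpha_1$-coefficient of $\alpha$. At each height $i$ for which $K$ contributes a basis element, a direct enumeration shows that the positive roots of height $i$ with $d_1=1$ are exactly the elements of $\Delta(\fka)$ of that height; in type $B_n$, the unique such root at height $n$ is $\varepsilon_1$ and at height $2n-j+1$ is $\varepsilon_1+\varepsilon_j$, and a parallel enumeration covers type $D_n$. Lemma \ref{ToricDeformation} applied to each $\Lambda^{(i)}$ with $c=-1$ then extracts a nonzero root vector at each $\alpha\in\Delta(\fka)$. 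In case (2), the two height-$(n-1)$ basis elements $\Lambda^{(n-1)}$ and $Z$ contribute vectors in $\fkg_{\varepsilon_1-\varepsilon_n}\oplus\fkg_{\varepsilon_1+\varepsilon_n}$ that are linearly independent by the non-degeneracy clause of Proposition \ref{prop:JtoK}. A dimension count then yields $\lim_{t\to 0}\Ad(g(t))(K)=\fka$.

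Case (3) requires two deformations in sequence, because $\varepsilon_2+\varepsilon_3\in\Delta(\fka)$ has trivial $\alpha_1$-coefficient and so is invisible to the toric deformation above. I first apply $\exp(t^{-1}\ad X_\beta)$ with $\beta=\varepsilon_3-\varepsilon_{n-1}=\alpha_3+\cdots+\alpha_{n-2}$ (which reduces to $\alpha_3$ when $n=5$). The key computation is that $[X_\beta,\Lambda^{(n-1)}]$ equals a nonzero multiple of $X_{\varepsilon_2+\varepsilon_3}$, coming from the $X_{\varepsilon_2+\varepsilon_{n-1}}$ term of $\Lambda^{(n-1)}$; meanwhile $[X_\beta,Z]=0$, the bracket $[X_\beta,\Lambda^{(n)}]$ lands in the root space $\fkg_{\varepsilon_1+\varepsilon_3}=\C\Lambda^{(2n-4)}\subseteq K$, and $[X_\beta,\Lambda^{(i)}]=0$ for $i\geq n+1$. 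By Lemma \ref{UnipotentDeformation}(2) the limit $K':=\lim_{t\to 0}\exp(t^{-1}\ad X_\beta)(K)$ contains $X_{\varepsilon_2+\varepsilon_3}$, and a straightforward basis manipulation identifies
\[
K'=\C X_{\varepsilon_2+\varepsilon_3}\oplus\C Z\oplus\bigoplus_{i=n}^{2n-3}\C\Lambda^{(i)}.
\]
This $K'$ still satisfies \eqref{eqn:AbelianSubalgebra} (the single-root spaces $\fkg_{\varepsilon_2+\varepsilon_3}$, $\fkg_{\varepsilon_1+\varepsilon_3}$, $\fkg_{\gamma_0}$ contained in $K'$ all propagate correctly), so I then apply the toric deformation $g(s)=\lambda_n(s)\lambda_1(s^a)^{-1}$ with $a\gg 0$, giving $(\m,\alpha)=-a\,d_1(\alpha)+d_n(\alpha)$. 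Lemma \ref{ToricDeformation} now pulls $X_{\varepsilon_1-\varepsilon_n}$ out of $Z$ (since $\varepsilon_1-\varepsilon_n$ beats $\varepsilon_1+\varepsilon_n$ by exactly $1$), pulls $X_{\varepsilon_1+\varepsilon_{2n-i-1}}$ out of each $\Lambda^{(i)}$, and leaves $X_{\varepsilon_2+\varepsilon_3}$ intact; these $n$ root vectors span $\fka$. Since $\overline{\Ad(G)K}$ is closed and $G$-stable, the composed deformation places $\fka\in\overline{\Ad(G)K}$.

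Case (4) is a finite enumeration of the $4$-dimensional abelian ideals of $\fkb$ in type $D_4$, on which the triality (outer $S_3$ automorphism) of $D_4$ acts. I would pick triality-orbit representatives and verify each by a tailored deformation modeled on cases (1)--(3). The main obstacle throughout is case (3): one must simultaneously check that the intermediate subalgebra $K'$ satisfies \eqref{eqn:AbelianSubalgebra} so that the second application of Lemma \ref{ToricDeformation} is valid, and that the unipotent step genuinely introduces $X_{\varepsilon_2+\varepsilon_3}$ without spuriously enlarging the limit subspace beyond the correct $n$-dimensional target $\fka$.
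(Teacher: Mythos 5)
Your overall strategy coincides with the paper's: cases (1) and (2) are handled by the single toric deformation $\lambda_1^{-1}(t)$ exactly as in the paper, with the height-$(n-1)$ ambiguity in case (2) resolved in both treatments by the linear-independence clause at the end of Proposition \ref{prop:JtoK}. The genuine divergence is in case (3). The paper takes $\beta=\alpha_4+\cdots+\alpha_{n-2}+\alpha_n=\varepsilon_4+\varepsilon_n$, so that $\ad X_\beta$ sends both $Z$ and $\Lambda^{(n-1)}$ to nonzero multiples of $X_{\varepsilon_1+\varepsilon_4}$ (through their common $\varepsilon_1-\varepsilon_n$-components), and then performs two further toric steps ($\lambda_1^{-1}$, then $\lambda_n$); the height-$(n-1)$ part of the intermediate limit is then the kernel combination $a\Lambda^{(n-1)}+bZ$ of $\ad X_\beta$, whose $\varepsilon_1-\varepsilon_n$-component cancels by the very definition of the kernel, and one must track this carefully to see how $X_{\varepsilon_1-\varepsilon_n}$ is recovered at the end. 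Your $\beta=\varepsilon_3-\varepsilon_{n-1}$ instead annihilates $Z$ and meets only the $\varepsilon_2+\varepsilon_{n-1}$-component of $\Lambda^{(n-1)}$, producing $X_{\varepsilon_2+\varepsilon_3}$ directly; $Z$ then passes intact into $K'$, and the single weighted toric step $\lambda_n(s)\lambda_1(s^a)^{-1}$ extracts $X_{\varepsilon_1-\varepsilon_n}$ from it with no bookkeeping. I checked your key assertions: $\varepsilon_2+\varepsilon_{n-1}$ is the unique height-$(n-1)$ root $\gamma$ with $\beta+\gamma\in\Delta$, $[X_\beta,Z]=0$, $[X_\beta,\Lambda^{(n)}]\in\C\Lambda^{(2n-4)}\subseteq K$, and $[X_\beta,\Lambda^{(i)}]=0$ for $i\geq n+1$; with the dimension count this gives your $K'$ (note that $\fkg_{\varepsilon_1+\varepsilon_4}\subseteq K'$ as well, which also propagates correctly for \eqref{eqn:AbelianSubalgebra}), and the weight table for $\m=(-a,0,\ldots,0,1)$ is as you describe. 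So your case (3) is correct and, if anything, more transparent than the printed argument.

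The one real shortfall is case (4), which you only sketch. If the triality remark is meant to reduce the verification to one orbit representative, be aware that $\overline{\Ad(G)K}$ is not obviously stable under the outer automorphisms: $K$ is constructed from a specific, non-symmetric nilpotent $S$, so triality carries $K$ to a possibly different subalgebra, and only $\overline{\Ad(G)J}$ is manifestly triality-stable. The safe route, and the paper's, is to list the three ideals explicitly (the $d_1$-, $d_4$-, $d_3$-slices of the roots of height at least $3$) and apply $\lambda_1^{-1}$, $\lambda_4^{-1}$, $\lambda_3^{-1}$ respectively, exactly as in your cases (1) and (2).
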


\begin{proof}
(1)\quad
The heights of $K$ are greater than or equal to $n$ (cf. Proposition \ref{prop:JtoK}).
For a root $\alpha$ of height $\geq n$, the coefficient of $\alpha_1$
is $1$ or $0$, and $1$ exactly when $\alpha=\varepsilon_1, \varepsilon_1+\varepsilon_j$
 $(j\geq 2)$. 
In other words, for a root $\alpha$ of height $\geq n$,
$$
\alpha(\lambda_1^{-1}(t))=
\left\{
\begin{array}{ll}
t^{-1} & (\alpha=\varepsilon_1, \varepsilon_1+\varepsilon_j \quad (j\geq 2))\\
1 & (\text{otherwise}).
\end{array}
\right.
$$
Hence by Lemma \ref{ToricDeformation},
we see
$$
\lim_{t\to 0}\Ad(\lambda_1^{-1}(t))(K)=\fka.
$$

(2)\quad
Let
$$
X_{\varepsilon_i-\varepsilon_j}:=
E_{i,j}-E_{2n+1-j,2n+1-i},\quad
X_{\varepsilon_i+\varepsilon_j}:=
E_{i,2n+1-j}-E_{j,2n+1-i}
$$
for $i<j$.
Similarly to the proof of (1), by Lemma \ref{ToricDeformation},
\begin{eqnarray*}
&&
\lim_{t\to 0}\Ad(\lambda_1^{-1}(t))(K)\\
&=&
\langle Z,
c_{\varepsilon_1-\varepsilon_n}X_{\varepsilon_1-\varepsilon_n}+
c_{\varepsilon_1+\varepsilon_n}X_{\varepsilon_1+\varepsilon_n},
X_{\varepsilon_1+\varepsilon_{n-1}},
\ldots,X_{\varepsilon_1+\varepsilon_2}\rangle\\
&=&\fka.
\end{eqnarray*}
Here $\langle A_1,\ldots,A_k \rangle$ means the
$\C$-vector space spanned by $A_1,\ldots, A_k$, and the last equation holds by the latter half 
of Proposition \ref{prop:JtoK}.

(3)\quad
Let $\beta:=\alpha_4+\cdots+\alpha_{n-2}+\alpha_n$ if $n\geq 6$,
and $\beta:=\alpha_5$ if $n=5$.
Then $\hite(\beta)=n-4$, and
$\varepsilon_1+\varepsilon_4=\beta+(\alpha_1+\cdots+\alpha_{n-2}+\alpha_{n-1})$.
Since $\gamma:=\alpha_1+\cdots+\alpha_{n-2}+\alpha_{n-1}$ is 
the unique root of height $n-1$ such that
$\beta+\gamma$ is a root,
$\ad(X_\beta)(\Lambda^{(n-1)})$ and
$\ad(X_\beta)(Z)$
are nonzero multiples of $X_{\varepsilon_1+\varepsilon_4}$.
Since no root of height $n$ or $n+1$ remains as a root after added by $\beta$,
we have
$\ad(X_\beta)(\Lambda^{(n)})=0$ and $\ad(X_\beta)(\Lambda^{(n+1)})=0$.
By $\hite(\beta)=n-4$,
$\ad(X_\beta)(\Lambda^{(n+j)})=0$ for $j\geq 2$.
By Lemma \ref{UnipotentDeformation}
\begin{eqnarray*}
\lim_{t\to 0}\exp (t^{-1}\ad X_\beta)(K)
&=&
\langle X_{\varepsilon_1+\varepsilon_4}, \Lambda^{(n-1)},\Lambda^{(n)},\ldots,
\Lambda^{(2n-3)}\rangle\\
&=&
\langle X_{\varepsilon_2+\varepsilon_3}, \Lambda^{(n-1)},\Lambda^{(n)},\ldots,
\Lambda^{(2n-3)}\rangle
=:\fka_1.
\end{eqnarray*}
Here the last equation holds since
$\Lambda^{(2n-5)}=c_{\varepsilon_1+\varepsilon_4}X_{\varepsilon_1+\varepsilon_4}+
c_{\varepsilon_2+\varepsilon_3}X_{\varepsilon_2+\varepsilon_3}$.
Again similarly to the proof of (1), by Lemma \ref{ToricDeformation},
\begin{eqnarray*}
&&\lim_{t\to 0}\Ad(\lambda_1^{-1}(t))(\fka_1)\\
&=&\langle X_{\varepsilon_2+\varepsilon_3}, 
c_{\varepsilon_1-\varepsilon_n}X_{\varepsilon_1-\varepsilon_n}+
c_{\varepsilon_1+\varepsilon_n}X_{\varepsilon_1+\varepsilon_n},
X_{\varepsilon_1+\varepsilon_{n-1}},\ldots,
X_{\varepsilon_1+\varepsilon_2}\rangle\\
&=:&\fka_2.
\end{eqnarray*}
Finally,
$$
\lim_{t\to 0}\Ad(\lambda_n(t))(\fka_2)
=\langle X_{\varepsilon_2+\varepsilon_3}, 
X_{\varepsilon_1-\varepsilon_n},
X_{\varepsilon_1+\varepsilon_{n-1}},\ldots,
X_{\varepsilon_1+\varepsilon_2}\rangle=\fka.
$$

(4)\quad
Let $\fkg$ be of type $D_4$.
The following is the list of non-simple positive roots:
$$
\xymatrix{
&
{\substack{\alpha_1+2\alpha_2+\alpha_3+\alpha_4\\ (\varepsilon_1+\varepsilon_2)}}
\ar@{-}[d]
&\\
&
{\substack{\alpha_1+\alpha_2+\alpha_3+\alpha_4\\ (\varepsilon_1+\varepsilon_3)}}
\ar@{-}[d]
\ar@{-}[ld]
\ar@{-}[rd]
&
\\
{\substack{\alpha_1+\alpha_2+\alpha_3\\ (\varepsilon_1-\varepsilon_4)}}
\ar@{-}[d]
\ar@{-}[rd]
&
{\substack{\alpha_1+\alpha_2+\alpha_4\\ (\varepsilon_1+\varepsilon_4)}}
\ar@{-}[ld]
\ar@{-}[rd]
&
{\substack{\alpha_2+\alpha_3+\alpha_4\\ (\varepsilon_2+\varepsilon_3)}}
\ar@{-}[d]
\ar@{-}[ld]
\\
{\substack{\alpha_1+\alpha_2\\ (\varepsilon_1-\varepsilon_3)}}
&
{\substack{\alpha_2+\alpha_3\\ (\varepsilon_2-\varepsilon_4)}}
&
{\substack{\alpha_2+\alpha_4\\ (\varepsilon_2+\varepsilon_4)}}.
}
$$
There exist the following three cases:
\begin{enumerate}
\item[(i)] $\Delta(\fka)=\{ \varepsilon_1-\varepsilon_4, \varepsilon_1+\varepsilon_j\,
|\, j=2,3,4\}$,
\item[(ii)] $\Delta(\fka')=\{ \varepsilon_2+\varepsilon_3, \varepsilon_1+\varepsilon_j\,
|\, j=2,3,4\}$,
\item[(iii)] $\Delta(\fka'')=\{ \varepsilon_2+\varepsilon_3, \varepsilon_1-\varepsilon_4,
\varepsilon_1+\varepsilon_j\,
|\, j=2,3\}$.
\end{enumerate}
The case (i) is included in (2).
Similarly to the case (i), we have
\begin{eqnarray*}
\lim_{t\to 0}\Ad(\lambda_4^{-1}(t))(K)
&=&
\fka',\\
\lim_{t\to 0}\Ad(\lambda_3^{-1}(t))(K)
&=&
\fka''.\\
\end{eqnarray*}
\end{proof}

In the rest of this section, we fix an abelian ideal $\fka$
of $\fkb$ such that
$\Delta(\fka)$
is none of the ones in Proposition \ref{prop:BCDexceptional}.
To prove $\fka\in \overline{\Ad(G)K}$ (Theorem \ref{MainTheorem}),
we define a sequence of abelian Lie subalgebras $K=\fka_1, \fka_2,
\ldots, \fka_{n+1}=\fka$ of $\fkb$
such that
$$
\fka_{l+1}\in \overline{\Ad(G)\fka_l} \qquad (l=1,2,\ldots, n).
$$

For $\beta=\varepsilon_i+\varepsilon_j\in \Delta(\fka)$ ($i\leq j$), 
put 
\begin{equation}
\label{eqn:i&j}
i=i(\beta)\qquad \text{and}\qquad j=j(\beta).
\end{equation}
Hence in particular
$$
i(\beta)\leq j(\beta).
$$

Note that for $\alpha\in \Delta(\fka)$
\begin{eqnarray}
\hite(\alpha)&=&2n+2-i(\alpha)-j(\alpha) \quad (B_n),
\nonumber\\
\hite(\alpha)&=&2n+1-i(\alpha)-j(\alpha) \quad (C_n),
\\
\hite(\alpha)&=&2n-i(\alpha)-j(\alpha) \quad (D_n),
\nonumber
\end{eqnarray}
and that for $\alpha, \beta\in \Delta(\fka)$
\begin{equation}
\label{lemma:TopandHeight}
\alpha\leq \beta
\Leftrightarrow
i(\alpha)\geq i(\beta),\quad
j(\alpha)\geq j(\beta).
\end{equation}
Here recall that $\alpha\leq \beta$ means
$\beta-\alpha\in \N\Delta^+$.

Set
$$
Y:=Y(\fka):=
\{ (i(\alpha), j(\alpha))\,|\, \alpha\in \Delta(\fka)\}.
$$
We sometimes identify $Y(\fka)$ with $\Delta(\fka)$.
Let $M$ be the set of $(i,j)\in Y$
with minimal $i$ among the elements in $Y$ with the same height
(or equivalently with the same $i+j$):
$$
M:=\{ (i,j)\in Y\,|\,
(i',j')\in Y,\, i+j=i'+j'
\Rightarrow i\leq i'\}.
$$
Put $L:=Y\setminus M$.


We introduce a total order $\prec$ into $\Delta(\fka)$ by 
\begin{equation}
\label{eqn:prec}
\alpha
\succ
\beta
\qquad
\Leftrightarrow
\qquad
\left\{
\begin{array}{c}
j(\alpha)<j(\beta)\\
\text{or}\\
j(\alpha)=j(\beta),\, i(\alpha)<i(\beta).
\end{array}
\right.
\end{equation}
Then $\alpha\geq \beta$ implies $\alpha\succeq\beta$,
and the maximal root is the biggest.

Enumerate the roots in $\Delta(\fka)$ according to $\prec$
from the biggest to the smallest,
starting with $1$. Let $\alpha(k)$ be the $k$-th root
in $\Delta(\fka)$.
(Hence $\alpha(1)$ is the maximal root $\gamma_0$.)
Note that there exist two cases for $\alpha(l-1)$ and $\alpha(l)$
(see Figure \ref{fig0}).

\begin{figure}
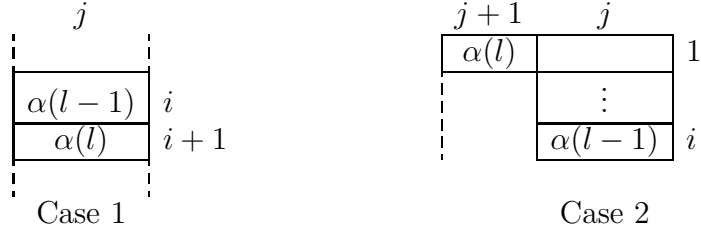

\hdashlinegap=2pt
\begin{tabular}{clcccl}
$j$ & \phantom{0} &  & $j+1$ & $j$ & \phantom{0}\\
\cline{4-5}
\multicolumn{1}{:c:}{\phantom{0}} &  &\phantom{1+2+3+4+5}
&\multicolumn{1}{|c}{$\alpha(l)$} &
\multicolumn{1}{|c|}{\phantom{0}} & $1$\\
\cline{1-1}\cline{4-5}
\multicolumn{1}{|c|}{$\alpha(l-1)$} & $i$
& \phantom{1+2+3+4+5} &
\multicolumn{1}{:c}{\phantom{0}} & \multicolumn{1}{|c|}{\vdots}
& \phantom{0}\\
\cline{1-1}\cline{5-5}
\multicolumn{1}{|c|}{$\alpha(l)$}& $i+1$ &
\phantom{1+2+3+4+5} &
\multicolumn{1}{:c}{\phantom{0}} &
\multicolumn{1}{|c|}{$\alpha(l-1)$} & $i$\\
\cline{1-1}\cline{5-5}
\multicolumn{1}{:c:}{\phantom{0}}& \phantom{0} &
\phantom{1+2+3+4+5} &
\phantom{0} &
\phantom{0} & \phantom{0}\\
\text{Case 1}& &&&
\text{Case 2}&
 \end{tabular} 
\caption{$\alpha(l-1)$ and $\alpha(l)$}
\label{fig0}
\end{figure}

For $l=1,2,\ldots, n+1$, put
$$
{Y}(l):=\{ \alpha(1), \alpha(2),\ldots, \alpha(l-1)\},\,\,
L(l):={Y}(l)\cap L,\,\,
M(l):={Y}(l)\cap M.
$$
We divide $M$ into two sets $M_1$ and $M_2$:
$$
M_1:=\{ \alpha\in M\,|\,  i(\alpha)=1 \},\quad M_2:=M\setminus M_1.
$$

\begin{definition}
A root $\alpha\in \Delta(\fka)$ is called a {\it source}
if there exist no $\beta\in \Delta(\fka)$ and $\gamma\in
\Delta^+$ such that $\alpha=\beta+\gamma$, i.e., a source is
a minimal element of $\Delta(\fka)$ with respect to $\leq$.
\end{definition}

By \eqref{lemma:TopandHeight} the following is obvious:

\begin{proposition}
\label{prop:source}
The set of sources equals
$$
\{ \varepsilon_i+\varepsilon_j\in \Delta(\fka)\,|\,
\varepsilon_{i+1}+\varepsilon_j, \varepsilon_i+\varepsilon_{j+1}\notin
\Delta(\fka)\}.
$$
\end{proposition}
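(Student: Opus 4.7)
The plan is to prove the claimed equality by establishing both inclusions, using only two ingredients: the order characterization \eqref{lemma:TopandHeight} and the upward-closure property \eqref{eqn:DeltaA} of $\Delta(\fka)$ under addition of positive roots.

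For the $\subseteq$ inclusion I would argue by contrapositive. Suppose $\alpha=\varepsilon_i+\varepsilon_j$ is a source. If $\varepsilon_{i+1}+\varepsilon_j\in \Delta(\fka)$, then by \eqref{lemma:TopandHeight} applied to the pair $(\varepsilon_{i+1}+\varepsilon_j,\alpha)$, we have $\varepsilon_{i+1}+\varepsilon_j<\alpha$ strictly (since $i(\varepsilon_{i+1}+\varepsilon_j)=i+1>i$ while $j(\varepsilon_{i+1}+\varepsilon_j)=j$), contradicting minimality of $\alpha$ with respect to $\leq$. The same argument excludes $\varepsilon_i+\varepsilon_{j+1}\in \Delta(\fka)$.

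For the $\supseteq$ inclusion, assume $\varepsilon_{i+1}+\varepsilon_j,\varepsilon_i+\varepsilon_{j+1}\notin \Delta(\fka)$ and, toward a contradiction, that $\alpha=\varepsilon_i+\varepsilon_j$ is not a source. Then there exists $\beta=\varepsilon_{i'}+\varepsilon_{j'}\in \Delta(\fka)$ with $\beta<\alpha$ strictly. By \eqref{lemma:TopandHeight} this forces $i'\geq i$ and $j'\geq j$, with strict inequality in at least one coordinate; without loss of generality $i'\geq i+1$. The plan is then to exhibit a chain of positive roots
\[
\beta=\beta_0<\beta_1<\cdots<\beta_m=\varepsilon_{i+1}+\varepsilon_j,
\]
each step adding a simple root from $\{\alpha_{i+1},\ldots,\alpha_{i'-1}\}\cup\{\alpha_j,\ldots,\alpha_{j'-1}\}$ (first lowering the first index from $i'$ down to $i+1$, then lowering the second index from $j'$ down to $j$). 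Iterated application of \eqref{eqn:DeltaA} propagates membership in $\Delta(\fka)$ along the chain, yielding $\varepsilon_{i+1}+\varepsilon_j\in \Delta(\fka)$, a contradiction.

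The main point to verify is that each intermediate root $\varepsilon_k+\varepsilon_l$ along the chain is actually a root of the given type $B$, $C$, or $D$; this is straightforward since we keep $k<l$ throughout (as $i'\leq j'$ is preserved). The one edge case is when $j=i+1$, which makes the target $\varepsilon_{i+1}+\varepsilon_j=2\varepsilon_{i+1}$ a root only in type $C$. In types $B$ and $D$ one handles this by switching to the symmetric candidate $\varepsilon_i+\varepsilon_{j+1}=\varepsilon_i+\varepsilon_{i+2}$ and building the analogous chain ending there. That case analysis is the only routine obstacle; everything else is immediate from \eqref{lemma:TopandHeight}, as the author indicates.
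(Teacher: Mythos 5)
Your argument is correct and is exactly the fleshed-out version of what the paper dismisses with ``By \eqref{lemma:TopandHeight} the following is obvious'': one direction from the order characterization \eqref{lemma:TopandHeight}, the other by propagating membership upward along a chain of simple-root additions via \eqref{eqn:DeltaA}, with the $2\varepsilon_{i+1}$ degeneracy in types $B$ and $D$ handled by switching to the other candidate. Your ``without loss of generality $i'\geq i+1$'' is really a two-case symmetry (when $i'=i$ the chain must target $\varepsilon_i+\varepsilon_{j+1}$ instead), but since the hypothesis excludes both candidate roots, either target gives the contradiction and the proof stands.
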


\begin{example}
\label{ex:B7}
Let $\fkg$ be of type $B_7$, and let
$$
\Delta(\fka):=
\{
\varepsilon_1+\varepsilon_2,
\varepsilon_1+\varepsilon_3,
\varepsilon_2+\varepsilon_3,
\varepsilon_1+\varepsilon_4,
\varepsilon_2+\varepsilon_4,
\varepsilon_3+\varepsilon_4,
\varepsilon_1+\varepsilon_5
\}.
$$
The sources are $\varepsilon_3+\varepsilon_4,
\varepsilon_1+\varepsilon_5$.

By the definition of $\prec$,
$$
\alpha(1)=\varepsilon_1+\varepsilon_2,
\alpha(2)=\varepsilon_1+\varepsilon_3,
\ldots,
\alpha(6)=\varepsilon_3+\varepsilon_4,
\alpha(7)=\varepsilon_1+\varepsilon_5;
$$
\begin{figure}[!ht]
\begin{tiny}
\begin{center}
\begin{tabular}{ccccl}
5 & 4 & 3 & 2 & $j$/$i$\\
\cline{1-4}
 \multicolumn{1}{|c}{7}
&
 \multicolumn{1}{|c|}{4}
&
 \multicolumn{1}{c|}{2}
&
 \multicolumn{1}{c|}{1}
&
1\\
\cline{1-4}
\phantom{0}
&
 \multicolumn{1}{|c|}{5}
&
 \multicolumn{1}{c|}{3}
&
\phantom{0}
&
2\\
\cline{2-3}
\phantom{0}
&
 \multicolumn{1}{|c|}{6}
&
\phantom{0}
&
\phantom{0}
&
3\\
\cline{2-2}
 \end{tabular}.
\end{center}
\end{tiny}
\label{fig1}
\end{figure}

\noindent
Here we put $k$ in the box at $(i,j)$ when $\alpha(k)=\varepsilon_i+\varepsilon_j$ $(i<j)$.
We have
\begin{eqnarray*}
M &=&
\{ \alpha(1), \alpha(2), \alpha(4), \alpha(6), \alpha(7)\}\\
L &=&
\{ \alpha(3), \alpha(5)\},
\end{eqnarray*}
and
$$
M_1=\{ \alpha(1), \alpha(2), \alpha(4), \alpha(7)\},\quad
M_2=\{ \alpha(6)\}.
$$
\end{example}

\bigskip
For $\alpha\in \Delta(\fka)$, let $s(\alpha)\in \Delta(\fka)$
denote the biggest (with respect to $\prec$) source $\beta$ 
with $\beta\leq \alpha$.
By \eqref{lemma:TopandHeight},
\begin{equation}
\label{eqn:s(alpha)&alpha}
i(s(\alpha))\geq i(\alpha),\quad j(s(\alpha))\geq j(\alpha).
\end{equation}

\begin{lemma}
\label{lemma:sourceM2}
For $\alpha\in M_2$,
$j(\alpha)=j(s(\alpha))$.
\end{lemma}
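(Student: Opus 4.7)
The plan is to exhibit a source $\beta \in \Delta(\fka)$ satisfying $\beta \leq \alpha$ and $j(\beta) = j(\alpha)$, and then use the definition of $s(\alpha)$ (the biggest source $\leq \alpha$ in the order $\prec$) to promote this to the claim $j(s(\alpha)) = j(\alpha)$.

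To produce $\beta$, I would travel down the column $j := j(\alpha)$: set
$$
i^* := \max\{i' \geq i(\alpha) : \varepsilon_{i'} + \varepsilon_{j} \in \Delta(\fka)\}
$$
and put $\beta := \varepsilon_{i^*} + \varepsilon_{j}$. Then $\beta \in \Delta(\fka)$, $j(\beta) = j(\alpha)$, and $\alpha - \beta = \varepsilon_{i(\alpha)} - \varepsilon_{i^*}$ is zero or a positive root, so $\beta \leq \alpha$. The maximality of $i^*$ gives $\varepsilon_{i^*+1} + \varepsilon_{j} \notin \Delta(\fka)$, so by Proposition \ref{prop:source} it remains to verify $\varepsilon_{i^*} + \varepsilon_{j+1} \notin \Delta(\fka)$.

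The heart of the argument is the exclusion of $\varepsilon_{i^*} + \varepsilon_{j+1} \in \Delta(\fka)$; this is where the hypothesis $\alpha \in M_2$ (i.e., $i(\alpha) \geq 2$) is invoked. If $j + 1 > n$ the alleged root does not exist, so assume $j+1 \leq n$. Since $i(\alpha) - 1 < i(\alpha) \leq i^*$, the root $\varepsilon_{i(\alpha) - 1} - \varepsilon_{i^*}$ is positive; adding it via the closure property \eqref{eqn:DeltaA} to $\varepsilon_{i^*} + \varepsilon_{j+1}$ yields $\varepsilon_{i(\alpha) - 1} + \varepsilon_{j+1} \in \Delta(\fka)$. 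But this root has the same height as $\alpha$ while its first index is $i(\alpha) - 1 < i(\alpha)$, contradicting $\alpha \in M$. Hence $\beta$ is a source.

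Finally, $s(\alpha) \succeq \beta$ by definition of $s(\alpha)$, which forces $j(s(\alpha)) \leq j(\beta) = j(\alpha)$ via \eqref{eqn:prec}; combined with $j(s(\alpha)) \geq j(\alpha)$ from \eqref{eqn:s(alpha)&alpha}, equality follows. The only subtlety I foresee is the index-range check that $\varepsilon_{i(\alpha)-1} + \varepsilon_{j+1}$ is genuinely a positive root in all three types $B_n, C_n, D_n$; the chain $1 \leq i(\alpha) - 1 < i(\alpha) \leq j(\alpha) < j + 1 \leq n$ handles this uniformly, and it remains valid in the type $C$ case where $i(\alpha) = j(\alpha)$ is allowed.
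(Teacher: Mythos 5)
Your argument is correct and turns on exactly the observation underlying the paper's one-line proof: $\alpha\in M_2$ forces $\varepsilon_{i(\alpha)}+\varepsilon_{j(\alpha)+1}\notin\Delta(\fka)$, since otherwise \eqref{eqn:DeltaA} together with $i(\alpha)\geq 2$ would place $\varepsilon_{i(\alpha)-1}+\varepsilon_{j(\alpha)+1}$ in $\Delta(\fka)$, violating the minimality of $i(\alpha)$ in its height class. The paper gets from this observation to the conclusion without constructing a source: $s(\alpha)$ itself lies in $\Delta(\fka)$ and satisfies $i(s(\alpha))\geq i(\alpha)$, so if $j(s(\alpha))>j(\alpha)$ then $s(\alpha)\leq\varepsilon_{i(\alpha)}+\varepsilon_{j(\alpha)+1}$ by \eqref{lemma:TopandHeight}, and upward closure puts the excluded root in $\Delta(\fka)$. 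Your detour through an explicit source $\beta$ in column $j:=j(\alpha)$ is therefore unnecessary, and it creates the one loose end in your write-up: the set defining $i^*$ is not capped at $j$, and if it contained some $i'>j$ then by the convention \eqref{eqn:i&j} your $\beta$ would have $j(\beta)=i^*\neq j(\alpha)$, breaking the final chain $j(s(\alpha))\leq j(\beta)=j(\alpha)$. This cannot actually occur --- $\varepsilon_{i'}+\varepsilon_{j}\in\Delta(\fka)$ with $i'>j$ would give $\varepsilon_{j}+\varepsilon_{j+1}\in\Delta(\fka)$, hence $\varepsilon_{i(\alpha)}+\varepsilon_{j+1}\in\Delta(\fka)$ by upward closure, which is precisely the configuration you exclude --- but you should say so, or else restrict to $i'\leq j$ from the outset (in which case, in type $C$, maximality of $i^*$ no longer rules out $\varepsilon_{i^*+1}+\varepsilon_{j}$ when $i^*=j$, and the same $M$-minimality argument must be invoked there as well). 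With that one sentence added, your proof is complete and is in substance the paper's.
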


\begin{proof}
This is clear since $\alpha\in M_2$ implies
$(i(\alpha), j(\alpha)+1)\notin Y$.
\end{proof}

\begin{lemma}
\label{lemma:sourceOfX}
Let $\alpha, \beta\in \Delta(\fka)$
satisfy
$j(\alpha)=j(\beta)$.

Then
$s(\alpha)=s(\beta)$.
\end{lemma}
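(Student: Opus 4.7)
The plan is to classify sources first. By Lemma~\ref{lemma:RootsBCD} and our standing assumption that $\Delta(\fka)$ is none of the exceptional ones in Proposition~\ref{prop:BCDexceptional}, every root in $\Delta(\fka)$ has the form $\varepsilon_i+\varepsilon_j$ with $i\le j$, and by \eqref{lemma:TopandHeight} the root order on $\Delta(\fka)$ reads coordinatewise on $(i(\cdot),j(\cdot))$. Since sources are, by definition, minimal elements of $\Delta(\fka)$ with respect to $\le$, two distinct sources must be $\le$-incomparable. Hence the sources may be listed as $(i_1,j_1),(i_2,j_2),\ldots,(i_r,j_r)$ with $i_1<i_2<\cdots<i_r$ and $j_1>j_2>\cdots>j_r$.

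Next I translate the total order $\prec$ of \eqref{eqn:prec} to this list. Since $j_k$ decreases with $k$, the source $(i_k,j_k)$ with larger $k$ has strictly smaller $j$-coordinate and is therefore $\prec$-bigger. Thus, for $\alpha\in\Delta(\fka)$, the source $s(\alpha)$ is characterized as the source $(i_k,j_k)$ with the largest admissible index $k$, where admissibility means $(i_k,j_k)\le\alpha$, i.e., $i_k\ge i(\alpha)$ and $j_k\ge j(\alpha)$.

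Now I parse these two constraints. As $k$ grows, $i_k$ increases while $j_k$ decreases, so $j_k\ge j(\alpha)$ forces $k\le k^{\ast}(j(\alpha))$, where $k^{\ast}(j):=\max\{k\mid j_k\ge j\}$. The key claim is that $s(\alpha)=(i_{k^{\ast}(j(\alpha))},j_{k^{\ast}(j(\alpha))})$. To see this, note that since $\Delta(\fka)$ is finite, $\alpha$ lies above some minimal element, i.e., some source $(i_k,j_k)$ with $k\le k^{\ast}(j(\alpha))$; monotonicity then gives $i_{k^{\ast}(j(\alpha))}\ge i_k\ge i(\alpha)$, so the index $k^{\ast}(j(\alpha))$ is itself admissible, and by its maximality it realizes $s(\alpha)$.

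Granting this characterization, $s(\alpha)$ depends on $\alpha$ only through $j(\alpha)$, so $j(\alpha)=j(\beta)$ immediately yields $s(\alpha)=s(\beta)$, proving the lemma. I do not anticipate a real obstacle; the only subtlety is ensuring that the candidate index $k^{\ast}(j(\alpha))$, defined purely from $j(\alpha)$, automatically satisfies the lower bound $i_{k^{\ast}}\ge i(\alpha)$, and this is exactly what the existence of \emph{some} source below $\alpha$ delivers via the monotonicity of $(i_k)$.
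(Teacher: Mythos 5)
Your proof is correct. It takes a more structural route than the paper's: you first establish that the sources form an antichain, hence a ``staircase'' $(i_1,j_1),\ldots,(i_r,j_r)$ with $i_k$ strictly increasing and $j_k$ strictly decreasing, and from this you extract an explicit formula $s(\alpha)=(i_{k^{\ast}},j_{k^{\ast}})$ with $k^{\ast}=\max\{k\mid j_k\geq j(\alpha)\}$, which visibly depends only on $j(\alpha)$; the lemma is then immediate. The paper instead argues locally by contradiction: assuming $i(\alpha)<i(\beta)$, it shows $s(\beta)\preceq s(\alpha)$, and if $s(\alpha)\not\leq\beta$ it deduces $i(s(\alpha))<i(\beta)\leq i(s(\beta))$ together with $j(s(\beta))\geq j(s(\alpha))$, forcing $s(\beta)\lneq s(\alpha)$ and contradicting minimality of $s(\alpha)$. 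The two arguments rest on the same ingredients --- the coordinatewise description \eqref{lemma:TopandHeight} of $\leq$, the existence of a source below any element of $\Delta(\fka)$, and the fact that among sources smaller $j$ means $\prec$-bigger --- but yours proves a slightly stronger statement (an explicit description of the map $\alpha\mapsto s(\alpha)$ as a function of $j(\alpha)$ alone), which would also make Lemma \ref{lemma:sourceM2} and parts of the proof of Lemma \ref{lem:t_l's} more transparent; the paper's version is shorter and avoids setting up the enumeration of sources. The only point worth stating a touch more carefully is the well-definedness of $k^{\ast}(j(\alpha))$, i.e., the nonemptiness of $\{k\mid j_k\geq j(\alpha)\}$, but this follows from the existence of a source below $\alpha$, which you do invoke.
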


\begin{proof}
We may suppose that $i(\alpha)< i(\beta)$.
Then $s(\beta)\leq \beta \leq \alpha$, and hence
$s(\beta)\preceq s(\alpha)\leq \alpha$.

If $s(\alpha)\leq \beta$, then we have $s(\alpha)=s(\beta)$
by the definition of $s(\beta)$.

Suppose that $s(\alpha)\not\leq \beta$.
Then $i(s(\alpha))<i(\beta) (\leq i(s(\beta)))$,
because $j(s(\alpha))\geq j(\alpha)=j(\beta)$.
By $s(\beta)\preceq s(\alpha)$, we have
$j(s(\beta))\geq j(s(\alpha))$.
Hence we have $s(\beta)\lneq s(\alpha)$.
This implies that $s(\alpha)$ is not a source.
\end{proof}

For $l\geq 1$, we define $1\leq t_l\leq \infty$, which plays
an important role in the inductive proof of Theorem \ref{thm:Main}.

\begin{equation}
\label{eqn:t_j}
t_l:=\left\{
\begin{array}{l}
\infty \qquad\quad (l=1)\\
\infty \qquad\quad 
(Y\ni\exists 2\varepsilon_j\leq \alpha(l-1)\notin M_2 \qquad \text{(Type $C$)})\\
\infty \qquad\quad (Y\ni\exists \varepsilon_{j-1}+\varepsilon_j\leq \alpha(l-1)\notin M_2 \quad 
\text{(Types $B,D$)})\\
\min\{ i(s(\beta))\,|\, \beta\succeq \alpha(l-1)\}
\qquad\qquad (\text{otherwise}).
\end{array}
\right.
\end{equation}

\begin{example}
In Example \ref{ex:B7},
$t_1=t_2=\cdots=t_6=\infty, t_7=3, t_8=1$.
\end{example}

\begin{lemma}
\label{lem:t_l's}
We have
$$
t_1\geq t_2\geq \cdots,
$$
and, if $t_l\neq \infty$, then
$t_l=i(s(\alpha(l-1)))$.
Moreover $t_{l+1}\neq t_l$ implies 
$t_l=\infty$ and $\alpha(l)\in M_2$, or
$\alpha(l)\in M_1$.
\end{lemma}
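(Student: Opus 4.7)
The plan is to proceed by induction on $l$, analysing the transition $\alpha(l-1)\mapsto\alpha(l)$ through the two cases of Figure~\ref{fig0}: in Case 1, $\alpha(l)$ lies in the same column of $Y$ as $\alpha(l-1)$ with $i$-coordinate incremented by $1$, so $\alpha(l)\leq\alpha(l-1)$ in root order; in Case 2, $\alpha(l)=(1,j(\alpha(l-1))+1)$, which automatically puts $\alpha(l)\in M_1$. In either case $\{\beta\succeq\alpha(l)\}=\{\beta\succeq\alpha(l-1)\}\cup\{\alpha(l)\}$, and whenever the ``min''-clause applies, $t_{l+1}=\min(t_l,\,i(s(\alpha(l))))$. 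Two structural facts about sources drive the proof: first, since $Y$ is an order filter in the $(i,j)$-grid, its sources are the outer corners of the corresponding Young shape, and when listed by increasing $j^*$ their $i^*$-coordinates are strictly decreasing; consequently $s(\alpha)$ is the source with smallest $j^*\geq j(\alpha)$ satisfying $i^*\geq i(\alpha)$. Second, Lemma~\ref{lemma:sourceOfX} tells us that all roots of $\Delta(\fka)$ sharing a column have the same source.

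For the monotonicity (a), I must rule out the scenario that $t_l$ is finite but $t_{l+1}=\infty$. The ``$\infty$''-clause for $t_{l+1}$ demands both a bad root (of type $\varepsilon_{j'-1}+\varepsilon_{j'}$ in types $B,D$ or $2\varepsilon_{j'}$ in type $C$) lying $\leq\alpha(l)$ in $Y$, and $\alpha(l)\notin M_2$. I plan to preclude this via three implications: (i) in Case 1, $\alpha(l-1)\in M_2\Rightarrow\alpha(l)\in M_2$, which reduces to the order-ideal implication $(i-1,j+1)\notin Y\Rightarrow(i,j+1)\notin Y$ (and its antidiagonal analogues) applied to the antidiagonal of $\alpha(l)$; (ii) in Case 1, absence of a bad root $\leq\alpha(l-1)$ transfers to $\alpha(l)$ since $\alpha(l)\leq\alpha(l-1)$; (iii) in Case 2, absence of a bad root $\leq\alpha(l)=(1,j(\alpha(l-1))+1)$ follows either from $\alpha(l-1)\in M_2$ (via order-ideal closure starting from $(i-1,j+1)\notin Y$) or from the ``no bad root $\leq\alpha(l-1)$'' hypothesis, since any such bad root has $j'\geq j(\alpha(l-1))+1$ hence $j'-1\geq j(\alpha(l-1))\geq i(\alpha(l-1))$, so it also lies $\leq\alpha(l-1)$.

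For (b), when $t_l$ is finite I observe: for $\beta\succeq\alpha(l-1)$ in the same column, Lemma~\ref{lemma:sourceOfX} gives $i(s(\beta))=i(s(\alpha(l-1)))$; for $\beta$ in an earlier column ($j(\beta)<j(\alpha(l-1))$), the strict-decrease of $i^*$ along sources, combined with the fact that $s(\alpha(l-1))$ already satisfies the defining constraints for $s(\beta)$ (when $i(\beta)\leq i(\alpha(l-1))$) or forces $s(\beta)$ to pick a strictly earlier source (when $i(\beta)>i(\alpha(l-1))$), yields $i(s(\beta))\geq i(s(\alpha(l-1)))$. Hence the minimum is attained at $\alpha(l-1)$. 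For (c), in Case 1 Lemma~\ref{lemma:sourceOfX} gives $s(\alpha(l))=s(\alpha(l-1))$, so $t_l=t_{l+1}$ whenever both are finite; paired with (a), $t_{l+1}\neq t_l$ in Case 1 can only occur if $t_l=\infty$, in which case the analysis from (i) forces $\alpha(l)\in M_2$. Case 2 automatically gives $\alpha(l)\in M_1$, completing the dichotomy.

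The main obstacle is coordinating the three clauses in the definition of $t_l$ through the induction; the degenerate bad roots $\varepsilon_i+\varepsilon_{i+1}$ in types $B,D$ and $2\varepsilon_i$ in type $C$ (which can simultaneously play the role of $\alpha(l-1)$ and of the witnessing bad root) require a small number of additional case-splits, and the order-ideal implication underpinning (i) is the technical heart of the argument.
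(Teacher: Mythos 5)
Your proposal is correct and follows essentially the same route as the paper: induction on $l$ through the two configurations of Figure~\ref{fig0}, with monotonicity obtained by showing $t_{l+1}=\infty\Rightarrow t_l=\infty$ plus the superset of minimands, and the dichotomy in the last claim read off from the two cases together with Lemma~\ref{lemma:sourceOfX}. The one place you genuinely improve on the paper is the observation that, because the sources are the outer corners of the order filter $Y$ and their $i$-coordinates strictly decrease as $j$ increases, $s(\alpha)$ is simply the first source in a column $\geq j(\alpha)$ --- so $s(\alpha)$ depends on $j(\alpha)$ alone; this makes Lemma~\ref{lemma:sourceOfX} immediate and gives $i(s(\beta))\geq i(s(\alpha(l-1)))$ for all $\beta\succeq\alpha(l-1)$ in one line, whereas the paper argues $i(s(\alpha(l)))\leq i(s(\alpha(l-1)))$ by a more ad hoc contradiction with \eqref{eqn:DeltaA}. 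One small repair: in your treatment of the final claim for Case~1 with $t_l=\infty$ and $t_{l+1}<\infty$, the fact you need is not implication (i) but its companion in the other direction --- a bad root $\leq\alpha(l-1)$ remains $\leq\alpha(l)=(i+1,j)$ (using $j\geq i+2$ in types $B,D$, resp.\ $j\geq i+1$ in type $C$), so the only way $t_{l+1}$ can become finite is $\alpha(l)\in M_2$; this is true and easy, but it is the converse of your stated item (ii), not a consequence of (i).
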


\begin{proof}
Recall that there exist two cases for $\alpha(l-1)$ and $\alpha(l)$
(see Figure \ref{fig0}).
First we prove that $t_{l+1}=\infty$ implies $t_{l}=\infty$.
This is clear for Case 1.
(Note that $\alpha(l-1)\in M_2$ implies $\alpha(l)\in M_2$, or
$\alpha(l)\notin M_2$ implies $\alpha(l-1)\notin M_2$.)
In Case 2, $t_{l+1}=\infty$ implies that
$2\varepsilon_{j(\alpha(l))}\in Y$ (Type $C$)
and $\varepsilon_{j(\alpha(l))-1}+\varepsilon_{j(\alpha(l))}\in Y$
(Types $B,D$), respectively.
Then $\alpha(l-1)=2\varepsilon_{j(\alpha(l))-1}$ 
(Type $C$)
and $\alpha(l-1)=\varepsilon_{j(\alpha(l))-2}+\varepsilon_{j(\alpha(l))-1}$
(Types $B,D$), respectively.
Hence
we have $t_l=\infty$.

Suppose that $t_{l+1}\neq \infty$. Then
by $\{ i(s(\beta))\,|\, \beta \succeq \alpha(l)\}
\supseteq \{ i(s(\beta))\,|\, \beta \succeq \alpha(l-1)\}$,
clearly $t_{l+1}\leq t_{l}$.


Next we show $i(s(\alpha(l))\leq i(s(\alpha(l-1))$ for any $l$.
If $s(\alpha(l))= s(\alpha(l-1))$, then this is clear.
Suppose that $s(\alpha(l))\neq s(\alpha(l-1))$.
Then $\alpha(l-1)$ and $\alpha(l)$ are in Case 2 (Figure \ref{fig0}).
Thus $j(\alpha(l))= j(\alpha(l-1))+1$ and $\alpha(l)\in M_1$.
If $i(s(\alpha(l))>i(\alpha(l-1))$, then
$$
i(\alpha(l-1))=
\left\{
\begin{array}{ll}
j(\alpha(l-1))-1 & (\text{Types $B,D$})\\
j(\alpha(l-1)) & (\text{Type $C$}),
\end{array}
\right.
$$
because otherwise
$s(\alpha(l))\leq (i(\alpha(l-1))+1, j(\alpha(l-1)))$
by the inequalities
$j(s(\alpha(l)))\geq j(\alpha(l))= j(\alpha(l-1))+1>j(\alpha(l-1))$ and
\eqref{lemma:TopandHeight}.
But this contradicts \eqref{eqn:DeltaA},
since  $(i(\alpha(l-1))+1, j(\alpha(l-1)))$ does not belong to $Y$.

Thus $s(\alpha(l-1))\neq s(\alpha(l))$ implies
$i(s(\alpha(l))\leq i(\alpha(l-1))\leq i(s(\alpha(l-1))$.
Here the last inequality holds by \eqref{lemma:TopandHeight}.

Hence in any case
$i(s(\alpha(l))\leq i(s(\alpha(l-1))$, and thus
$t_l=i(s(\alpha(l-1)))$ if $t_l\neq\infty$.

Finally suppose that $t_{l+1}\neq t_l$ and $\alpha(l)\notin M_1$.
Then $\alpha(l-1)$ and $\alpha(l)$ are in Case 1 (Figure \ref{fig0}).
By Lemma \ref{lemma:sourceOfX}, $s(\alpha(l-1))= s(\alpha(l))$.
Hence $t_l=\infty, t_{l+1}=i(s(\alpha(l))), \alpha(l-1)\notin M_2$,
and $\alpha(l)\in M_2$.
\end{proof}

\begin{lemma}
\label{lem:t_j}
$t_l\geq i(\alpha(l))$.
\end{lemma}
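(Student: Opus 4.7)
The plan is to reduce via Lemma \ref{lem:t_l's} to showing $i(s(\alpha(l-1)))\geq i(\alpha(l))$, since $t_l=\infty$ (which covers $l=1$) makes the inequality vacuous. I then analyze the two configurations for $(\alpha(l-1),\alpha(l))$ in Figure \ref{fig0} separately.

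In Case 2 of Figure \ref{fig0}, $\alpha(l)$ is the $\prec$-largest element of $\Delta(\fka)$ with $j$-coordinate $j(\alpha(l))>j(\alpha(l-1))$, and the ideal property \eqref{eqn:DeltaA} forces $\varepsilon_1+\varepsilon_{j(\alpha(l))}\in\Delta(\fka)$ whenever some $\varepsilon_{i'}+\varepsilon_{j(\alpha(l))}$ lies in $\Delta(\fka)$. Hence $i(\alpha(l))=1$, and the desired inequality is trivial.

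Case 1 is the substantive case: $\alpha(l-1)=\varepsilon_i+\varepsilon_j$ and $\alpha(l)=\varepsilon_{i+1}+\varepsilon_j$, and I must prove $i(s(\alpha(l-1)))\geq i+1$. I will argue by contradiction. Set $s:=s(\alpha(l-1))$. Then $s\leq\alpha(l-1)$ forces $i(s)\geq i$, and $s\neq\alpha(l-1)$ because $\alpha(l-1)$ is not a source (it is dominated by $\alpha(l)\in\Delta(\fka)$). So the only obstruction to the inequality is $i(s)=i$ with $j(s)>j$. In that scenario, the source condition on $s$ says $\varepsilon_{i+1}+\varepsilon_{j(s)},\,\varepsilon_i+\varepsilon_{j(s)+1}\notin\Delta(\fka)$. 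Picking any source $s'=\varepsilon_{i''}+\varepsilon_{j''}$ with $s'\leq\alpha(l)$ (existence by finiteness of $\Delta(\fka)$), one has $i''\geq i+1$, $j''\geq j$, and $s'\leq\alpha(l-1)$, so the $\prec$-maximality of $s$ among sources below $\alpha(l-1)$ forces $j(s)\leq j''$.

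The contradiction then splits into two sub-cases. If $j(s)=j''$, applying \eqref{eqn:DeltaA} to $s'$ with the positive root $\varepsilon_{i+1}-\varepsilon_{i''}$ (or noting $s'=\varepsilon_{i+1}+\varepsilon_{j(s)}$ directly when $i''=i+1$) yields $\varepsilon_{i+1}+\varepsilon_{j(s)}\in\Delta(\fka)$, contradicting the source condition on $s$. If $j(s)<j''$, then \eqref{eqn:DeltaA} first produces $\varepsilon_i+\varepsilon_{j''}\in\Delta(\fka)$, and then (either directly when $j''=j(s)+1$, or via one further application using $\varepsilon_{j(s)+1}-\varepsilon_{j''}$) produces $\varepsilon_i+\varepsilon_{j(s)+1}\in\Delta(\fka)$, again contradicting the source condition on $s$. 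The main obstacle is orchestrating this Case 1 contradiction, because it requires balancing the $\prec$-order (defining $s$) against the usual root order (governing the source and ideal conditions) and then propagating membership in $\Delta(\fka)$ via repeated applications of the abelian ideal property.
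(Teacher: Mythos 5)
Your proposal is correct and follows the same skeleton as the paper's proof: reduce via Lemma \ref{lem:t_l's} to $t_l=i(s(\alpha(l-1)))$, and split according to the two configurations of Figure \ref{fig0}, with Case 2 disposed of by $i(\alpha(l))=1$. The only divergence is Case 1, where the paper obtains the inequality in one line by citing Lemma \ref{lemma:sourceOfX} (equal $j$-coordinates give equal sources, so $t_l=i(s(\alpha(l)))\geq i(\alpha(l))$ by \eqref{lemma:TopandHeight}), whereas you re-derive the needed bound by a contradiction argument with Proposition \ref{prop:source} and \eqref{eqn:DeltaA} --- in effect reproving the relevant special case of that lemma; your argument checks out, but invoking Lemma \ref{lemma:sourceOfX} would shorten it considerably.
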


\begin{proof}
We may suppose that $t_l\neq \infty$.
By Lemma \ref{lem:t_l's},
$t_l=i(s(\alpha(l-1)))$.
If $\alpha(l-1)$ and $\alpha(l)$ are in Case 2 (Figure \ref{fig0}),
then $i(\alpha(l))=1$, and the assertion is clear.
If $\alpha(l-1)$ and $\alpha(l)$ are in Case 1 (Figure \ref{fig0}),
then $s(\alpha(l))= s(\alpha(l-1))$ by Lemma \ref{lemma:sourceOfX},
and hence by \eqref{lemma:TopandHeight}
$$
t_l=i(s(\alpha(l-1)))=i(s(\alpha(l)))\geq i(\alpha(l)).
$$
\end{proof}

\begin{theorem}
\label{thm:Main}
Let $\fkg$ be a simple Lie algebra of type $B, C$, or $D$.
Then
we have
Theorem \ref{MainTheorem}, i.e.,
$\fka\in
\overline{\Ad(G)K}$.
\end{theorem}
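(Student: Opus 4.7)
The plan is to construct an explicit chain
$$K=\fka_1, \fka_2, \ldots, \fka_{n+1}=\fka$$
of abelian Lie subalgebras of $\fkb$ with $\fka_{l+1}\in\overline{\Ad(G)\fka_l}$ for every $l$, which by successive closure (applied to $\overline{\Ad(G)(\cdot)}$) yields $\fka\in\overline{\Ad(G)K}$. The conclusion $\fka\in\overline{\Ad(G)J}$ and $\fka\in\overline{\Ad(G)\fkh}$ then follows from Propositions \ref{prop:JtoK} and \ref{prop:Ranee}.

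The induction hypothesis would be that $\fka_l$ has a basis of the following shape: the already-fixed root vectors $X_{\alpha(1)},\ldots,X_{\alpha(l-1)}$, together with ``deformed'' height-graded elements $\Lambda^{(i)}_l$ of the form $\sum_{\hite(\alpha)=i}c_\alpha X_\alpha$ (with support only on roots lying in $\Delta(\fka)$ and weakly below $\alpha(l-1)$ in $\prec$), plus, in the type $D_n$ case, a vector of the same general form as $Z$ at height $n-1$. To pass from $\fka_l$ to $\fka_{l+1}$, one needs to peel off the single root vector $X_{\alpha(l)}$ while preserving the already-fixed entries and the height-graded structure of the remainder.

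The tool used depends on whether the transition $\alpha(l-1)\to\alpha(l)$ is Case 1 or Case 2 of Figure \ref{fig0}, and on whether $t_l=\infty$ or $t_l$ has become finite. In Case 1 with $t_l=\infty$ (no source has appeared above column $j(\alpha(l))$ yet), I would apply a toric deformation $\Ad(\lambda_{i(\alpha(l))-1}^{-1}(t))$ or similar to $\fka_l$: by Lemma \ref{ToricDeformation}, this extracts the $X_{\alpha(l)}$-component out of $\Lambda^{(\hite(\alpha(l)))}_l$ while fixing $X_{\alpha(k)}$ for $k<l$ (the previously fixed entries lie in lower rows) and preserving, up to a scalar, the height-graded elements at other heights. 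In Case 2 transitions (column jump), or when a source enters and $t_l$ becomes finite by Lemma \ref{lem:t_l's}, the direct toric move is blocked by the ideal condition \eqref{eqn:DeltaA}, and I would instead use a unipotent deformation $\exp(t^{-1}\ad X_\beta)$ where $\beta$ is chosen so that $\alpha(l)=s(\alpha(l-1))+\beta$ with $s(\alpha(l-1))$ the source governing $t_l$; Lemma \ref{UnipotentDeformation}(2) then produces $X_{\alpha(l)}$ as a limit, and Lemma \ref{lem:t_j} ($t_l\geq i(\alpha(l))$) guarantees that no previously fixed entry is destroyed. The distinction between $M_1$ and $M_2$ and the monotonicity $t_1\geq t_2\geq\cdots$ are exactly what makes this dichotomy consistent: they say that sources appear only at the transitions at which the deformation method changes.

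The main obstacle will be verifying that at each inductive step the chosen deformation (a) actually has $X_{\alpha(l)}$ as its limit at the correct position, (b) fixes the earlier $X_{\alpha(k)}$, and crucially (c) keeps the limit space abelian and of dimension $n$. Point (c) is where the ideal property \eqref{eqn:DeltaA} and the combinatorial control provided by the $t_l$'s are essential: any root vector produced as a commutator must lie outside $\Delta(\fka)$, so that only a bracket that is zero can appear. Additional care will be needed for type $D_n$ because the height-$(n-1)$ component of $K$ is two-dimensional (containing $Z$ and a nonproportional combination of $X_{\varepsilon_1-\varepsilon_n}$ and $X_{\varepsilon_1+\varepsilon_n}$); the linear independence guaranteed by the last sentence of Proposition \ref{prop:JtoK} is used precisely so that two independent root vectors can be extracted at this height when both $\varepsilon_1-\varepsilon_n$ and $\varepsilon_1+\varepsilon_n$ (or a source at height $n-1$ of form $\varepsilon_i+\varepsilon_j$ together with $\varepsilon_1-\varepsilon_n$) occur in $\Delta(\fka)$. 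Since Proposition \ref{prop:BCDexceptional} has already disposed of the $\Delta(\fka)$'s where the height-graded picture degenerates, the induction will terminate correctly at $\fka_{n+1}=\fka$.
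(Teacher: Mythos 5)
Your overall strategy---a chain $K=\fka_1,\ldots,\fka_{n+1}=\fka$ of abelian subalgebras with $\fka_{l+1}\in\overline{\Ad(G)\fka_l}$, peeling off $X_{\alpha(l)}$ at step $l$---is the same as the paper's, but the step-by-step mechanism you describe has a genuine gap at the roots $\alpha(l)\in L$, i.e.\ those roots of $\Delta(\fka)$ that share their height with another root of $\Delta(\fka)$. Your induction hypothesis keeps ``height-graded elements $\Lambda^{(i)}_l$ with support on roots of $\Delta(\fka)$,'' but $K$ has exactly one basis vector per height (two at height $n-1$ in type $D$), and the set $\hite(Y)$ has only $|M|=n-|L|$ elements; the surplus $|L|$ basis vectors of $K$ sit at heights \emph{below} $\min\hite(Y)$, supported on roots \emph{outside} $\Delta(\fka)$. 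Consequently, when $\alpha(l)\in L$ there is no element of $\fka_l$ at height $\hite(\alpha(l))$ left to act on (it was consumed by the $M$-root of the same height), so the toric deformation you propose for the ``same column'' transitions has nothing to extract $X_{\alpha(l)}$ from. The paper's solution is the heart of the argument: write $\alpha(l)=\gamma+\beta$ where $\gamma$ is a root of height $h=\hite(\gamma_0)-n+\sharp L(l)+1$ lying in the support of the surplus low-height element $\Theta_{\sharp L(l)+1}$, apply $\exp(t^{-1}\ad X_\beta)$, and verify that $\gamma$ is the \emph{unique} height-$h$ root in $P_{\leq t_l}(\Theta_{\sharp L(l)+1})$ with $\gamma+\beta\in\Delta^+$, so that the bracket is exactly $\C X_{\alpha(l)}$. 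Your substitute recipe ``$\beta$ with $\alpha(l)=s(\alpha(l-1))+\beta$'' cannot play this role: $X_{s(\alpha(l-1))}$ is not an element of $\fka_l$ (sources come late in the order $\prec$), and $\alpha(l)-s(\alpha(l-1))$ need not even be a single root.

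You have also attached the two deformations to the wrong transitions. The column-jump transitions (Case 2 of Figure \ref{fig0}) are precisely the steps $\alpha(l)\in M_1$, where $X_{\alpha(l)}$ already lies in $\fka_l$ (all other roots in the support of $P_{\leq t_l}(\Lambda^{(\hite(\alpha(l)))})$ are in $Y(l)$) and the only work is the toric truncation from $P_{\leq t_l}$ to $P_{\leq t_{l+1}}$ when $t_{l+1}<t_l$; a unipotent deformation does not accomplish that. Conversely, the same-column transitions split into $\alpha(l)\in L$ (unipotent, as above) and $\alpha(l)\in M_2$, for which the paper needs a two-parameter toric weight $\m=-a\e_{t_{l+1}}-b\e_{j_l}$ to isolate $X_{\alpha(l)}$ while preserving the truncated elements; a single one-parameter subgroup $\lambda_{i(\alpha(l))-1}^{-1}$ does not separate these weights. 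Your remarks on abelianness, on $t_l\geq i(\alpha(l))$ protecting fixed entries, and on the type-$D$ height-$(n-1)$ subtlety are all on target, but without the $\Theta_k$ bookkeeping and the $L$-root extraction the induction cannot be carried out.
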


\begin{proof}
We already proved the assertion in four cases
(Proposition \ref{prop:BCDexceptional}).
We suppose that $\fka$ is none of those cases.

Set $\hite(Y):=\{ \hite(\alpha)\,|\, \alpha\in Y\}$.
Then, by the definition of $M$, 
for each $h\in \hite(Y)$, there exists a unique $\alpha\in M$ with
$\hite(\alpha)=h$.

For $k$ with $\hite(\gamma_0)-n+k< \min \hite({Y})$,
Put
\begin{equation*}
\Theta_k:=
\left\{
\begin{array}{ll}
Z & \text{if $k=1$ in type $D_n$,}\\
\Lambda^{(\hite(\gamma_0)-n+k)}
& \text{otherwise.}
\end{array}
\right.
\end{equation*}
Here recall Example \ref{D-1} for $Z$ and
Proposition \ref{prop:JtoK} for $\Lambda^{(k)}$.

Then
$$
K=\bigoplus_{\hite(\gamma_0)-n+k<\min (\hite(Y))}
\C \Theta_k \bigoplus
\bigoplus_{\alpha\in M}\C \Lambda^{(\hite(\alpha))}.
$$

For $k$ and
$\Gamma=\sum_{\alpha\in \Delta^+} a_\alpha X_\alpha$,
put
\begin{equation}
\label{eqn:Pgamma}
P_{\leq k}(\Gamma):=
\sum_{i(\alpha)\leq k} a_\alpha X_\alpha.
\end{equation}

Set
\begin{eqnarray*}
\fka_l&:=&
\bigoplus_{\alpha\in {Y}(l)}
\C X_\alpha
\bigoplus
\bigoplus_{\overset{\alpha\in M_2\setminus {Y}(l)}{
s(\alpha)=s(\beta)\,\,
(\exists \beta\in M_2(l))}}
\C X_{\alpha}\\
&&\quad\bigoplus \!\!\!\!
\bigoplus_{\overset{k>\sharp L(l)}{
\hite(\gamma_0)-n+k< \min \hite({Y})}}
\C P_{\leq t_{l}}(\Theta_k)
\bigoplus
\bigoplus_{\alpha\in M_1\setminus
{Y}(l)}
\C P_{\leq t_{l}}(\Lambda^{(\hite(\alpha))})\\
&&\quad
\bigoplus
\bigoplus_{\underset{s(\alpha)\neq s(\beta)\,
\, (\forall \beta\in M_2(l))}{\alpha\in M_2\setminus {Y}(l)}}
\C P_{\leq t_{l}}(\Lambda^{(\hite(\alpha))}).
\end{eqnarray*}

Then $\fka_1=K$, and $\fka_{n+1}=\fka$.
Note that $\fka_l$ satisfies the assumption \eqref{eqn:AbelianSubalgebra};
suppose that $\alpha\in M_2\setminus Y(l)$ and $s(\alpha)=s(\beta)$ with $\beta\in M_2(l)$,
and that $\gamma, \alpha+\gamma\in \Delta^+ $.
If $\alpha+\gamma\notin Y(l)$,
then $\beta\succeq \alpha(l-1)\succ \alpha+\gamma\succ \alpha$.
Hence by the definition of $\succ$ and Lemma \ref{lemma:sourceM2}
$$
j(\alpha)\geq j(\alpha+\gamma)\geq j(\beta)=j(s(\beta))=j(s(\alpha))\geq j(\alpha).
$$
Hence $j(\alpha+\gamma)=j(\beta)$, and by Lemma \ref{lemma:sourceOfX}
$s(\alpha+\gamma)=s(\beta)$.
Moreover $\beta\succ \alpha+\gamma$, $j(\alpha+\gamma)=j(\beta)$,
and $\beta\in M_2$ imply $\alpha+\gamma\in M_2$.
We have thus checked the assumption \eqref{eqn:AbelianSubalgebra}
for $\fka_l$.

We show
\begin{equation}
\label{eqn:fkal}
\fka_{l+1}\in
\overline{\Ad(G)\fka_l}\qquad (l=1,2,\ldots, n).
\end{equation}
Then, inductively, we have
Theorem \ref{MainTheorem}.

The proof of \eqref{eqn:fkal} is divided into three cases according to
$\alpha(l)\in M_1, L$, or $M_2$.

\medskip
(Case 1: $\alpha(l)\in M_1$.)
In this case, $\alpha(l-1)$ and $\alpha(l)$ are in Case 2 (Figure \ref{fig0}).
Since each root appearing in $P_{\leq t_l}(\Lambda^{\hite(\alpha(l))})$
except $\alpha(l)$ belongs to $Y(l)$,
the root vector $X_{\alpha(l)}$ belongs to $\fka_l$.
If $t_{l+1}=t_l$, then $\fka_{l+1}=\fka_l$.

Next suppose that $t_{l+1}< t_l$. 
We prove that
\begin{equation}
\label{MainTheorem:M1}
\fka_{l+1}=
\lim_{t\to 0}\Ad(\lambda_{t_{l+1}}^{-1}(t))\fka_l.
\end{equation}
First we show $t_{l+1}\leq i(\alpha(l-1))$.
If $t_{l+1}<t_l<\infty$,
then
$\alpha(l-1)$ is a source, and $t_l=i(\alpha(l-1))$ by Lemma \ref{lem:t_l's}.
If $t_l=\infty$ and $t_{l+1}<\infty$, then
$(i(\alpha(l-1))+1,j(\alpha(l-1))+1)$ does not belong to $Y$.
Since $j(\alpha(l))=j(\alpha(l-1))+1$,
this implies $t_{l+1}=i(s(\alpha(l)))\leq i(\alpha(l-1))$.
Hence we have proved $t_{l+1}\leq i(\alpha(l-1))$.

If the coefficient of $\alpha_{t_{l+1}}$ in a root $\alpha$
is $2$, then $\alpha$ is of form $\varepsilon_i+\varepsilon_j$ with
$i\leq j\leq t_{l+1}$.
Since $i\leq j\leq t_{l+1}\leq i(\alpha(l-1))\leq j(\alpha(l-1))$,
we have $\alpha(l-1)\leq \alpha$, and hence $X_\alpha\in \fka_l$.
Hence the linear combinations with roots whose coefficients of
$\alpha_{t_{l+1}}$ are $1$ survive under $\lim_{t\to 0}\Ad(\lambda_{t_{l+1}}^{-1}(t))$
by Lemma \ref{ToricDeformation};
$$
P_{\leq t_{l+1}}(\Lambda^{(h)}),  P_{\leq t_{l+1}}(\Theta_k)\in
\lim_{t\to 0}\Ad(\lambda_{t_{l+1}}^{-1}(t))\fka_l.
$$
Hence we have proved \eqref{MainTheorem:M1}.

\medskip
(Case 2: $\alpha(l)\in L$.)
In this case, $\alpha(l-1)$ and $\alpha(l)$ are in Case 1 (Figure \ref{fig0}),
and $t_{l+1}=t_l$ by Lemma \ref{lem:t_l's}.
Suppose that $l=3$ and $\fkg$ is of type $D_n$.
Then $\fka_2=\fka_3$ and
$\alpha(3)=\varepsilon_2+\varepsilon_3$.
Let $\beta:=\alpha_4+\cdots+\alpha_{n-2}+\alpha_n$ if $n\geq 6$,
and $\beta:=\alpha_5$ if $n=5$.
As in the proof of Proposition \ref{prop:BCDexceptional}
(3),
$\C [X_\beta, Z]=\C X_{\varepsilon_1+\varepsilon_4}$.
Hence
$
\langle  [X_\beta, Z], \Lambda^{(2n-5)}\rangle
= \langle X_{\varepsilon_1+\varepsilon_4}, 
X_{\varepsilon_2+\varepsilon_3}\rangle
$
and
$$
\lim_{t\to 0}\exp (t^{-1}\ad X_\beta)(\fka_3)
=\fka_4.
$$

Suppose that $l\neq 3$ or $\fkg$ is not of type $D_n$.
Let $h:=\hite(\Theta_{\sharp L(l)+1})$.
Recall that $\Theta_{\sharp L(l)+1}=
\Lambda^{(h)}$
and $h=\hite(\gamma_0)-n+\sharp L(l)+1$.

Put $i:=i(\alpha(l))$ and $j:=j(\alpha(l))$.
By Lemma \ref{lem:t_l's}
$t_{l+1}=t_l$, and note that
 $i\geq 2$, since $\alpha(l)\in L$.
We express $\alpha(l)$ as a sum of simple roots
in the following order: 

\begin{enumerate}
\item[($B_n$)]
\begin{eqnarray*}
\alpha(l)&=&
\sum_{i\leq k<j}\alpha_k+2\sum_{j\leq k \leq n}\alpha_k\\
&=&
\alpha_i+\cdots +\alpha_j +\cdots +\alpha_n+
\alpha_n +\cdots +\alpha_j,
\end{eqnarray*}
\item[($C_n$)]
\begin{eqnarray*}
\alpha(l)&=&
\sum_{i\leq k<j}\alpha_k+2\sum_{j\leq k < n}\alpha_k+\alpha_n\\
&=&
\alpha_i+\cdots +\alpha_j +\cdots +
\alpha_n +\cdots +\alpha_j,
\end{eqnarray*}
\item[($D_n$)]
\begin{eqnarray*}
\alpha(l)&=&
\sum_{i\leq k<j}\alpha_k+2\sum_{j\leq k \leq n-2}\alpha_k
+\alpha_{n-1}+\alpha_n\\
&=&
\alpha_i+\cdots +\alpha_j +\cdots +\alpha_{n-2}+\alpha_n+
\alpha_{n-1} +\cdots +\alpha_j,
\end{eqnarray*}
\end{enumerate}

In the above, let
$\gamma$ be the sum of the first $h$ simple roots,
and $\beta$ the rest.
Note that $\beta$ and $\gamma$ can be defined
since we have $h<\min \hite(Y)\leq \hite(\alpha(l))$,
and that they are in fact roots
since $h=n+\sharp L(l)$ for $B_n, C_n$ ($n-2+\sharp L(l)$ for $D_n$ respectively).

We prove that
\begin{equation}
\label{eqn:LtoLplus1}
\fka_{l+1}=\lim_{t\to 0}\exp t^{-1}\ad X_\beta (\fka_l).
\end{equation}

First we show
\begin{equation}
\label{Main:eqn:1st}
[ X_\beta, \C P_{\leq t_l}(\Theta_{\sharp L(l)+1})]
=\C X_{\alpha(l)}.
\end{equation}

By Lemma \ref{lem:t_j},
$t_l\geq i(\alpha(l))=i$,
Hence $\gamma$ certainly
appears in $P_{\leq t_l}(\Theta_{\sharp L(l)+1})$.
If $\gamma'\neq\gamma$ is a root of height $h$, and if
$\gamma'+\beta$ is also a root, then $\gamma'$ should be 
of the form
$\alpha_{j-1}+\alpha_{j-2}+\cdots+ \alpha_{j-h}$.
In Case $B$ or $C$,
no such root appears in $P_{\leq t_l}(\Theta_{\sharp L(l)+1})$,
since $h\geq n$.
In Case $D$, we have $h \geq n-2$.
If $h\geq n-1$, then
$\gamma'$ cannot exist.
If $h= n-2$, then $\sharp L(l)=0$, and
hence $\alpha(l)$ is the first root in $L$;
$\alpha(l)=\varepsilon_2+\varepsilon_3$.
If $\gamma'$ exists, then $1\leq j-h=3-(n-2)$.
Namely $n\leq 4$.
Hence the only possible case is the one
when
$n=4, \gamma=\alpha_2+\alpha_4, \beta=\alpha_3$, and
$\gamma'=\alpha_2+\alpha_1$,
which we excluded in the beginning.
Hence we have proved \eqref{Main:eqn:1st}.
Since $X_\alpha\in \fka_l$ for all $\alpha>\alpha(l)$,
by Lemma \ref{UnipotentDeformation}
$$
X_{\alpha(l)}\in \lim_{t\to 0}\exp t^{-1}\ad X_\beta (\fka_l).
$$

As in the previous paragraph, for $k\geq 1$,
$$
[X_\beta, P_{\leq t_l}
(\Lambda^{(h+k)})]\in
\C X_\alpha,
$$
where 
$\alpha=
\alpha(l)+\alpha_{i-1}+\alpha_{i-2}+\cdots+\alpha_{i-k}$.
Hence, even if $\alpha$ is a root, we have
\begin{equation}
\label{Main:eqn:2nd}
[X_\beta, P_{\leq t_l}
(\Lambda^{(h+k)})]\in
\fka_l\qquad (k\geq 1)
\end{equation}
since $\alpha> \alpha(l)$.
Hence by Lemma \ref{UnipotentDeformation}
\begin{equation}
P_{\leq t_{l+1}}
(\Lambda^{(h+k)})=
P_{\leq t_l}
(\Lambda^{(h+k)})
\in \lim_{t\to 0}\exp t^{-1}\ad X_\beta (\fka_l).
\end{equation}

Since ${Y}(l)$ is closed under
addition of a positive root,
it is clear that
\begin{equation*}
[ X_\beta, X_\alpha]
\in \fka_l
\qquad \text{for $\alpha\in {Y}(l)$}.
\end{equation*}
Hence by Lemma \ref{UnipotentDeformation}
\begin{equation}
\label{Main:eqn:3rd}
X_\alpha\in
\lim_{t\to 0}\exp t^{-1}\ad X_\beta (\fka_l)
\qquad \text{for $\alpha\in {Y}(l)$}.
\end{equation}

Finally, we prove
\begin{equation}
\label{Main:eqn:4th}
[ X_\beta, X_\alpha]
\in \fka_l
\end{equation}
for $\alpha\in M_2\setminus{Y}(l)$
with $s(\alpha)=s(\beta')$ and
$\beta'\in M_2(l)$.
Note that in this case 
$M_2\setminus Y(l)=M_2\setminus Y(l+1)$
and
$M_2(l)=M_2(l+1)$.
Since $\hite(\alpha)\geq \min \hite(Y)>\hite(\Theta_{\sharp L(l)+1})=h$,
by the similar argument to \eqref{Main:eqn:2nd},
we see
$[ X_\beta, X_\alpha]\in \C X_{\alpha'}$ with $\alpha'> \alpha(l)$.
We have thus proved \eqref{Main:eqn:4th},
and hence by Lemma \ref{UnipotentDeformation}
$$
X_\alpha\in
\lim_{t\to 0}\exp t^{-1}\ad X_\beta (\fka_l)
$$
for $\alpha\in M_2\setminus{Y}(l+1)$
with $s(\alpha)=s(\beta')$ and
$\beta'\in M_2(l+1)$.
Hence we have proved
\eqref{eqn:LtoLplus1}.

\medskip
(Case 3: $\alpha(l)=\varepsilon_{i_l}+\varepsilon_{j_l}\in M_2$.
($i_l<j_l$ for $B,D$; $i_l\leq j_l$ for $C$.))

In this case, $\alpha(l-1)$ and $\alpha(l)$ are in Case 1 (Figure \ref{fig0}),
and $s(\alpha(l-1))=s(\alpha(l))$ (Lemma \ref{lemma:sourceOfX}).
By Lemma \ref{lem:t_l's}, there exist two cases:
\begin{enumerate}
\item[(a)] $t_{l+1}=t_l< \infty$,
\item[(b)] $t_{l+1}<\infty, t_l=\infty, \alpha(l-1)\notin M_2$.
\end{enumerate}
Since $\alpha(l)=\varepsilon_{i_l}+\varepsilon_{j_l}\in M_2$
implies that there exist no $\alpha=\varepsilon_{i_l}+\varepsilon_{j}\in
Y$ for $j>j_l$, we see $j(s(\alpha(l)))=j_l$.

First consider Case (a); suppose that $t_{l+1}=t_l< \infty$.
If there exist $\beta\in M_2$ such that $\alpha(l)\prec \beta$ and
$s(\beta)=s(\alpha(l))$, then
$\fka_{l+1}=\fka_l$.

Suppose that there exist no such $\beta\in M_2$.
Then $s(\alpha(l))=\varepsilon_{t_{l+1}}+\varepsilon_{j_l}$
by Lemma \ref{lem:t_l's}.
Put
$$
\m=-a\e_{t_{l+1}}-b\e_{j_l}\in \Z^n,
$$
where $a>b>0$.
Then
$(\m, \varepsilon_i+\varepsilon_j)$ (cf. \eqref{eqn:parenthesis}) are as follows:

\begin{center}
\begin{tabular}{|c|c|c|c}
\label{tab1}
  $\cdots >j_l  $ & $j_l\geq \cdots $ & $t_{l+1}\geq\cdots $ & $j/i$ \\
\hline
 & & &$\vdots$ \\
  $-a-b$ & $-a-2b$ & $-2a-2b$ & $t_{l+1}$\\
\hline
 & & & $\vdots$\\
  $-b$ & $-2b$ & $-a-2b$ & $j_l$\\
\hline
  $0$ & $-b$ & $-a-b$ & \vdots\\
\hline
\end{tabular}.
\end{center}

We prove that 
\begin{equation}
\label{eq:Case3}
\fka_{l+1}=\lim_{t\to 0}\Ad(\lambda^\m(t))(\fka_l).
\end{equation}

Let $\alpha\in M_2$ satisfy $s(\alpha)=s(\alpha(l))$.
Then
$j(\alpha)=j(\alpha(l))=j_l$, and
$i_l=i(\alpha(l))\leq i(\alpha)\leq i(s(\alpha(l)))=t_{l+1}$,
since $\alpha\preceq \alpha(l)$.
Hence
$\alpha$ is the unique root in $P_{\leq t_l}(\Lambda^{(\hite(\alpha))})$
with $\m$-weight $-a-2b$ ($-2a-2b$ if $t_{l+1}=j_l$ in Type $C$)
outside of ${Y}(l)$.
Hence by Lemma \ref{ToricDeformation}
$$
X_\alpha\in \lim_{t\to 0}\Ad(\lambda^\m(t))(\C 
P_{\leq t_l}(\Lambda^{(\hite(\alpha))})).
$$

Let $\alpha\in M\setminus {Y}(l)$ and
$s(\alpha)\neq s(\alpha(l))$.
Then $\alpha\prec \alpha(l)$, and
$j(\alpha)>j(\alpha(l))$ by Lemma \ref{lemma:sourceOfX}.
Since $\alpha(l)\in M_2$, we have $i(\alpha)<i(\alpha(l))$.
We show 
\begin{equation}
\label{eqn:h-hl}
\hite(\alpha)>\hite(\alpha(l)).
\end{equation}
Suppose otherwise.
Then $j(\alpha)\geq i(\alpha(l))+j(\alpha(l))-i(\alpha)$.
We see that
$\gamma:=\varepsilon_{i(\alpha)}+\varepsilon_{i(\alpha(l))+j(\alpha(l))-i(\alpha)}$
is a root and
$\gamma \geq \alpha$.
Thus $\gamma\in Y$ and $\hite(\gamma)=\hite(\alpha(l))$,
which contradicts the fact that $\alpha(l)\in M$.

By \eqref{eqn:h-hl},
all roots in $P_{\leq t_l}(\Lambda^{(\hite(\alpha))})$
with $\m$-weight $-a-2b$ ($-2a-2b$ if $t_{l+1}=j_l$ in Type $C$)
are in ${Y}(l)$.
Hence by Lemma \ref{ToricDeformation}
the linear combination with $\m$-weight $-a-b$ survives;
$$
P_{\leq t_{l+1}}(\Lambda^{(\hite(\alpha))})
\in
\lim_{t\to 0}\Ad(\lambda^\m(t))(\C 
P_{\leq t_l}(\Lambda^{(\hite(\alpha))})+\bigoplus_{\beta\in Y(l)}\C X_\beta).
$$

Similarly, by $t_{l+1}=t_l$ and the above table,
$$
P_{\leq t_{l+1}}(\Theta_k)=\lim_{t\to 0}\Ad(\lambda^\m(t))
(P_{\leq t_{l}}(\Theta_k))
\in \lim_{t\to 0}\Ad(\lambda^\m(t))(\fka_l).
$$

Clearly $\lim_{t\to 0}\Ad(\lambda^\m(t))(\C X_\alpha)
=\C X_\alpha$.
Hence in Case (a)
$$
\fka_{l+1}=\lim_{t\to 0}\Ad(\lambda^\m(t))(\fka_l).
$$

Next consider Case (b);
suppose that $t_{l+1}<\infty, t_l=\infty, \alpha(l-1)\notin M_2$.
In this case,  $s(\alpha(l))=\varepsilon_{t_{l+1}}+\varepsilon_{j_l}$,
and $t_{l+1}=j_l-1$ 
for Types $B, D$ and $t_{l+1}=j_l$
for Type $C$, respectively.

Again we put
$$
\m=-a\e_{t_{l+1}}-b\e_{j_l}\in \Z^n, 
$$
where $a>b>0$, and we show
\begin{equation}
\label{eq:Case3-b}
\fka_{l+1}=\lim_{t\to 0}\Ad(\lambda^\m(t))(\fka_l).
\end{equation}

For $\alpha\in M_2$ satisfying $s(\alpha)=s(\alpha(l))$,
$$
X_\alpha\in \lim_{t\to 0}\Ad(\lambda^\m(t))(\C 
P_{\leq t_l}(\Lambda^{(\hite(\alpha))}))
$$
as in Case (a).

For $\alpha\in M\setminus {Y}(l)$ and
$s(\alpha)\neq s(\alpha(l))$,
$$
P_{\leq t_{l+1}}(\Lambda^{(\hite(\alpha))})
\in
\lim_{t\to 0}\Ad(\lambda^\m(t))(\C 
P_{\leq t_l}(\Lambda^{(\hite(\alpha))})+\bigoplus_{\beta\in Y(l)}\C X_\beta)
$$
as in Case (a) as well.

In this case, ${\rm ht}(s(\alpha(l))=\min {\rm ht} Y$.
Hence the possible
$\m$-weights of roots appearing in $\Theta_k$
are $-a-b, -b, 0$ (Types $B, D$) and $-a-b, 0$ (Type $C$), respectively.
The roots with $\m$-weight $-a-b$ 
are exactly those roots appearing $P_{\leq t_{l+1}}(\Theta_k)$.
Hence
$$
P_{\leq t_{l+1}}(\Theta_k)=\lim_{t\to 0}\Ad(\lambda^\m(t))
(P_{\leq t_{l}}(\Theta_k))
\in \lim_{t\to 0}\Ad(\lambda^\m(t))(\fka_l).
$$

Since $\lim_{t\to 0}\Ad(\lambda^\m(t))(\C X_\alpha)
=\C X_\alpha$ is clear,
we see 
$$
\fka_{l+1}=\lim_{t\to 0}\Ad(\lambda^\m(t))(\fka_l)
$$
in Case (b) as well.

We have thus finished the proof of the theorem.
\end{proof}

\section{Case $G_2$}

In the exceptional types, we fix a Chevalley basis 
$$
\{ X_\alpha, H_i\,|\, \alpha\in \Delta,\, i=1,2,\ldots,n\}
$$
of $\fkg$ as in \cite[Proposition 4]{Kurtzke}.
In particular, we have
\begin{equation}
\label{eqn:ChevalleyBasis}
[X_{\alpha_i}, X_\beta]=(p+1)X_{\beta+\alpha_i}
\end{equation}
for non-simple $\beta\in \Delta^+$ with $\beta+\alpha_i\in \Delta^+$,
where $p$ is the nonnegative integer satisfying
$$
\beta-p\alpha_i\in \Delta^+,\quad \beta-(p+1)\alpha_i\notin \Delta^+.
$$
Note that $p=0$ in the case $E$.

In this section, let $\fkg$ be of $G_2$ type.
Then
the
$\alpha_1$-, $\alpha_2$-strings in $\Delta^+$ are as follows:
\begin{equation}
\label{CD:string}
\begin{CD}
@. @. @. 3\alpha_1+2\alpha_2\\
@. @. @. @AAA \\
\alpha_2 @>>> \alpha_1+\alpha_2 @>>> 2\alpha_1+\alpha_2 @>>> 3\alpha_1+\alpha_2\\
@. @AAA @. @.\\
@. \alpha_1 @. @. 
\end{CD}.
\end{equation}

The element $\Lambda=X_{\alpha_1}+X_{\alpha_2}$
is regular nilpotent, and
its centralizer equals
$$
J:=\C \Lambda \bigoplus \C X_{3\alpha_1+2\alpha_2}.
$$
In this case, by \eqref{CD:string}
there exists a unique $2$-dimensional abelian $\fkb$-ideal:
$$
K:=\C X_{3\alpha_1+\alpha_2} \bigoplus \C X_{3\alpha_1+2\alpha_2}.
$$
We have
$K=
\lim_{t\to 0}
\exp(t^{-1}\ad X_{2\alpha_1+\alpha_2})(J)$.

Thus Theorem \ref{MainTheorem} holds trivially, and
the $\fkg$-module $C_2$ is irreducible.

\section{Case $F_4$}

Let $\fkg$ be of $F_4$ type.
The Dynkin diagram is
$$
\xymatrix{
\alpha_1 \ar@{-}[r] & \alpha_2 \ar@{=>}[r] &
\alpha_3 \ar@{-}[r] & \alpha_4
}.
$$

\begin{lemma}
Let $\fka$ be a $4$-dimensional abelian 
$\fkb$-ideal.
Then
$$
\fka=\langle X_\alpha \,|
\alpha=2342, 1342, 1242, 1232
\rangle=K,
$$
where $2342$ means $2\alpha_1+3\alpha_2+4\alpha_3+2\alpha_4$,
etc.

Hence Theorem \ref{MainTheorem} holds trivially.
\end{lemma}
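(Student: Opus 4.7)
The plan is to determine the unique $4$-dimensional abelian ideal of $\fkb$ explicitly, identify it with $K$, and then conclude Theorem~\ref{MainTheorem} trivially.

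First, I would enumerate the positive roots of $F_4$ of height at least $8$. Descending from $\gamma_0 = 2342$ by subtracting simple roots and checking which differences lie in $\Delta$ (most conveniently via the standard $\varepsilon_i$-realization $\alpha_1 = \varepsilon_2 - \varepsilon_3$, $\alpha_2 = \varepsilon_3 - \varepsilon_4$, $\alpha_3 = \varepsilon_4$, $\alpha_4 = \frac{1}{2}(\varepsilon_1 - \varepsilon_2 - \varepsilon_3 - \varepsilon_4)$), I would verify that each of the heights $8, 9, 10, 11$ hosts a single positive root, namely $1232, 1242, 1342, 2342$ respectively. Consequently $\fkg_{\geq 8} := \bigoplus_{i \geq 8} \fkg_i$ has dimension $4$; it is an ideal of $\fkb$ (heights can only increase when a positive root is added) and is abelian since the sum of any two of these roots has height at least $16 > 11 = \hite(\gamma_0)$ and so cannot be a root.

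To identify $\fkg_{\geq 8}$ with $K$, I would invoke Proposition~\ref{prop:JtoK} (with $n = 4$ in type $F_4$): $K$ has a basis $\{\Lambda^{(i)} \mid 8 \leq i \leq 11\}$, where each $\Lambda^{(i)}$ is a linear combination of the root vectors $X_\alpha$ of height $i$ with all coefficients nonzero. Since each of the heights $8, 9, 10, 11$ supports a single root, each $\Lambda^{(i)}$ is merely a scalar multiple of the corresponding $X_\alpha$, and hence $K = \fkg_{\geq 8}$.

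For uniqueness, let $\fka$ be any $4$-dimensional abelian ideal of $\fkb$. Every nonzero $\fkb$-ideal contains $X_{\gamma_0}$, so $2342 \in \Delta(\fka)$, and $\Delta(\fka)$ is upward-closed in $\Delta^+$ by \eqref{eqn:DeltaA}. A brief poset analysis near the top of $\Delta^+$ then shows $\Delta(\fka) \subseteq \{1232, 1242, 1342, 2342\}$: the two height-$7$ roots $1222$ and $1231$ each admit exactly $5$ positive roots above them (the upper set being $\{\mu, 1232, 1242, 1342, 2342\}$), and roots of height $\leq 6$ have strictly larger upper sets, so a minimal element of $\Delta(\fka)$ of height $\leq 7$ would force $|\Delta(\fka)| \geq 5$. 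The dimension constraint then forces $\fka = \fkg_{\geq 8} = K$, so Theorem~\ref{MainTheorem} holds immediately because $\fka \in \Ad(G)K$. The only real technical point is the explicit uniqueness of the height-$i$ positive root for $i \in \{8, 9, 10, 11\}$; this single fact underpins both the $4$-dimensional count and the poset argument, and everything else follows formally.
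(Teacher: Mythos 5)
Your proposal is correct and follows essentially the same route as the paper: both rest on the observation that each of the heights $8,9,10,11$ in $F_4$ carries a unique positive root, combined with the upward-closedness of $\Delta(\fka)$ from \eqref{eqn:DeltaA} and a counting argument forcing every minimal root of $\Delta(\fka)$ to have height at least $\hite(\gamma_0)-n+1=8$. The paper's proof is just a terser statement of the same facts, so your extra detail on the upper-set cardinalities and the identification of $K$ via Proposition \ref{prop:JtoK} merely fills in what the paper leaves implicit.
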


\begin{proof}
The maximal root is $2342$, whose height is $11$.
We know 
that $1342, 1242, 1232$ are the unique roots
with height $10, 9, 8$, respectively.
\end{proof}

The following is the list of
non-simple positive roots:

$$
\xymatrix{
& 2342 \ar@{-}[d] & \\
& 1342 \ar@{-}[d] & \\
& 1242 \ar@{-}[d] & \\
& 1232 \ar@{-}[ld] \ar@{-}[rd] & \\
1222 \ar@{-}[d] \ar@{-}[rrd] & & 
1231 \ar@{-}[d]\\
1122 \ar@{-}[d] \ar@{-}[rd] &&
1221 \ar@{-}[d] \ar@{-}[ld]\\
0122 \ar@{-}[d] & 1121
\ar@{-}[ld] \ar@{-}[d] \ar@{-}[rd] &
1220 \ar@{-}[d]\\
0121 \ar@{-}[d] \ar@{-}[rd] &
1111 \ar@{-}[ld] \ar@{-}[d] \ar@{-}[rd] &
1120 \ar@{-}[ld] \ar@{-}[d]\\
0111 \ar@{-}[d] \ar@{-}[rd] &
0120 \ar@{-}[d] &
1110 \ar@{-}[ld] \ar@{-}[d]\\
0011 & 0110 & 1100.
}
$$

The element $\Lambda:=X_{\alpha_1}+X_{\alpha_2}+X_{\alpha_3}+X_{\alpha_4}$
is regular nilpotent,
and let $J:=\fkz_{\fkg}(\Lambda)$.

\begin{lemma}
$$
J=\langle
\Lambda,\, 2X_{0122}-X_{1121}+X_{1220},\, X_{1222}-X_{1231},\, X_{2342}
\rangle.
$$
\end{lemma}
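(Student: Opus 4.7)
The plan is to exploit the graded decomposition $J=\bigoplus_j(J\cap \fkg_j)$ of the centralizer of the regular nilpotent $\Lambda$. By the cited result of Kostant (\cite[Theorem 6.7]{Kostant1959}), the heights of $J$ are precisely the exponents of $\fkg$. For $F_4$ these are $1,5,7,11$, each with multiplicity one; thus $\dim J=4$ and it suffices to exhibit one nonzero element at each of these four heights.

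Two of the basis vectors are immediate. Clearly $\Lambda\in J\cap \fkg_1$, and since $2342$ is the maximal root (height $11$, the top height in $\Delta^+$), any $[X_{\alpha_i},X_{2342}]$ lands in a height-$12$ space that is empty, so $X_{2342}\in J\cap \fkg_{11}$.

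For the remaining two pieces I would write a general $Y_h=\sum_{\hite(\alpha)=h} c_\alpha X_\alpha$ and impose $[X_{\alpha_i},Y_h]=0$ for $i=1,2,3,4$, using the Chevalley relation \eqref{eqn:ChevalleyBasis}: $[X_{\alpha_i},X_\beta]=(p+1)X_{\beta+\alpha_i}$ when $\beta+\alpha_i\in\Delta^+$, with $p$ read from the $\alpha_i$-string through $\beta$ (both sides are visible in the Hasse diagram printed above). At height $5$ the unknowns are $c_{0122},c_{1121},c_{1220}$ and the targets (height $6$) are $X_{1122}$ and $X_{1221}$. The only nonzero contributions are $[X_{\alpha_1},X_{0122}]=X_{1122}$, $[X_{\alpha_4},X_{1121}]=2X_{1122}$ (here $1121-\alpha_4=1120\in\Delta^+$, giving $p=1$), $[X_{\alpha_2},X_{1121}]=X_{1221}$, and $[X_{\alpha_4},X_{1220}]=X_{1221}$. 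This yields the two equations $2c_{0122}+2c_{1121}=0$ in $X_{1122}$ and $c_{1121}+c_{1220}=0$ in $X_{1221}$ — wait, carefully $c_{0122}\cdot 1 + c_{1121}\cdot 2=0$ and $c_{1121}+c_{1220}=0$ — whose solution space is spanned by $(c_{0122},c_{1121},c_{1220})=(2,-1,1)$, recovering $2X_{0122}-X_{1121}+X_{1220}$. At height $7$ the unknowns are $c_{1222},c_{1231}$ and the unique target is $X_{1232}$; since $[X_{\alpha_3},X_{1222}]=X_{1232}$ and $[X_{\alpha_4},X_{1231}]=X_{1232}$ (both with $p=0$), the single equation $c_{1222}+c_{1231}=0$ yields $X_{1222}-X_{1231}$ up to scalar.

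The main obstacle is simply the bookkeeping of the $p$-values for each pair $(\alpha_i,\beta)$ involved; once tabulated from the Hasse diagram, the two systems of linear equations are tiny and solve by inspection.
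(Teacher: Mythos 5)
Your approach is the same as the paper's: exhibit one nonzero element of $J$ in each graded piece $J\cap\fkg_j$ for $j$ an exponent of $F_4$ (namely $1,5,7,11$), reading the Chevalley structure constants off the root poset. All of your $p$-values and the two resulting linear systems are correct and reproduce the paper's computation exactly. One slip in the write-up, though: the condition for $Y_h\in J=\fkz_{\fkg}(\Lambda)$ is $[\Lambda,Y_h]=\sum_{i=1}^{4}[X_{\alpha_i},Y_h]=0$, not ``$[X_{\alpha_i},Y_h]=0$ for $i=1,2,3,4$'' as you state. Taken literally, the latter forces the trivial solution: $[X_{\alpha_1},Y_5]=c_{0122}X_{1122}$ alone would kill $c_{0122}$, and $[X_{\alpha_2},Y_5]$, $[X_{\alpha_4},Y_5]$ would kill $c_{1121}$ and $c_{1220}$. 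The equations you actually write down --- collecting, for each target root of height $h+1$, the total coefficient of $X_\gamma$ in the sum over all $i$ --- are precisely those of the correct condition $[\Lambda,Y_h]=0$ (note your equation $c_{0122}\cdot 1+c_{1121}\cdot 2=0$ mixes contributions from $i=1$ and $i=4$), so your conclusion stands; just restate the condition you are imposing.
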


\begin{proof}
Since
$[X_{\alpha_i}, X_{1222}]=\delta_{i3}X_{1232}$
and
$[X_{\alpha_i}, X_{1231}]=\delta_{i4}X_{1232}$,
we see that
$X_{1222}-X_{1231}\in J$.
Similarly, since
$[X_{\alpha_i}, X_{0122}]=\delta_{i1}X_{1122}$,
$[X_{\alpha_i}, X_{1121}]=2\delta_{i4}X_{1122}+\delta_{i2}X_{1221}$,
and
$[X_{\alpha_i}, X_{1220}]=\delta_{i4}X_{1221}$,
we see that
$2X_{0122}-X_{1121}+X_{1220}\in J$.
\end{proof}

\begin{proposition}
$$
(K=)\fka\in \overline{\Ad(G)J}.
$$
\end{proposition}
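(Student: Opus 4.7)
The plan is to exhibit an explicit chain of three unipotent deformations (each an instance of Lemma~\ref{UnipotentDeformation}(2)) that carries $J$ through intermediate abelian subalgebras and ends at $K=\fka$. Since $X_{2342}$ lies in both $J$ and $K$ and will be annihilated by every $\ad X_\beta$ we pick, the task reduces to replacing the remaining three generators of $J$—namely $\Lambda$, $2X_{0122}-X_{1121}+X_{1220}$, and $X_{1222}-X_{1231}$ (at heights $1,5,7$)—by the root vectors $X_{1232}$, $X_{1242}$, $X_{1342}$ (at heights $8,9,10$).

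The idea is that at each stage I pick $\beta\in\Delta^+$ so that $\ad X_\beta$ acts nontrivially on exactly the intended generator (producing a single-term image at the next target height), and $(\ad X_\beta)^2$ vanishes on every generator so that Lemma~\ref{UnipotentDeformation}(2) applies cleanly. First I would take $\beta_1=0120$: among the root summands of the generators of $J$, only $1000$, $0001$, and $1222$ have their $\beta_1$-sum in $\Delta^+$ (namely $1120$, $0121$, $1342$), so $\ad X_{\beta_1}(\Lambda)=c_1X_{1120}+c_2X_{0121}$ and $\ad X_{\beta_1}(X_{1222}-X_{1231})=c\,X_{1342}$, while $2X_{0122}-X_{1121}+X_{1220}$ and $X_{2342}$ are annihilated. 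The second iterate vanishes because none of $0120+1120,\;0120+0121,\;0120+1342$ is a root, so Lemma~\ref{UnipotentDeformation}(2) produces
$$\fka^{(1)}=\langle c_1X_{1120}+c_2X_{0121},\;2X_{0122}-X_{1121}+X_{1220},\;X_{1342},\;X_{2342}\rangle.$$
Next I would take $\beta_2=1111$, singled out by $1111+0121=1232\in\Delta^+$ while $1111+\gamma\notin\Delta^+$ for every other summand $\gamma$ appearing in the generators of $\fka^{(1)}$; this promotes the height-$4$ generator to a nonzero multiple of $X_{1232}$ and leaves the rest untouched, and $(\ad X_{\beta_2})^2$ vanishes since $1111+1232$ is not a root. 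Finally, I would take $\beta_3=1120$, for which $1120+0122=1242\in\Delta^+$ is the only relevant root sum; this converts $2X_{0122}-X_{1121}+X_{1220}$ into a nonzero multiple of $X_{1242}$ and fixes the remaining generators, yielding $K\in\overline{\Ad(G)J}$.

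The main obstacle is the combinatorial problem of choosing the three roots $\beta_1,\beta_2,\beta_3$. At each stage $\beta$ must simultaneously satisfy: $\beta+\gamma\in\Delta^+$ for exactly the $\gamma$'s making up the intended generator, $\beta+\gamma\notin\Delta^+$ for every other $\gamma$ appearing in the current subalgebra, and $(\ad X_\beta)^2=0$ on all generators. The $F_4$ root system is just large enough that these separate conditions genuinely have to be checked individually, but small enough that a finite root-sum inspection—together with the Chevalley relation~\eqref{eqn:ChevalleyBasis}—completes the verification once the three $\beta_j$ are pinned down.
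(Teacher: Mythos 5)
Your argument is correct and follows essentially the same route as the paper: a chain of three limits $\lim_{t\to 0}\exp(t^{-1}\ad X_\beta)$ justified by Lemma~\ref{UnipotentDeformation}(2) and a finite root-sum inspection in the $F_4$ root system, only with different choices of $\beta$ (you use $0120$, $1111$, $1120$, whereas the paper uses $1242$, $0121$, $\alpha_4$, moving each generator of $J$ exactly once). All of your root-addition claims and nonvanishing structure constants check out against the $F_4$ root poset, so the three deformations do carry $J$ to $K=\fka$ as claimed.
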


\begin{proof}
By considering heights, we see
\begin{eqnarray*}
&&\lim_{t\to 0}\exp(t^{-1}\ad(X_{1242}))(J)\\
&=&
\langle
2X_{0122}-X_{1121}+X_{1220},
X_{1222}-X_{1231},
X_{1342}, X_{2342}
\rangle=:\fka_1.
\end{eqnarray*}
We easily see
\begin{eqnarray*}
&&\lim_{t\to 0}\exp(t^{-1}\ad(X_{0121}))(\fka_1)\\
&=&
\langle
X_{1222}-X_{1231}, X_{1242},
X_{1342}, X_{2342}
\rangle=:\fka_2.
\end{eqnarray*}
Finally
\begin{eqnarray*}
&&\lim_{t\to 0}\exp(t^{-1}\ad(X_{\alpha_4}))(\fka_2)\\
&=&
\langle
X_{1232}, X_{1242},
X_{1342}, X_{2342}
\rangle=\fka.
\end{eqnarray*}
\end{proof}
\section{Case $E_6$}

Let $\fkg$ be of $E_6$ type.
The Dynkin diagram is
$$
\xymatrix{
\alpha_1 \ar@{-}[r] & \alpha_3 \ar@{-}[r] &
\alpha_4 \ar@{-}[r] \ar@{-}[d] & \alpha_5 \ar@{-}[r] &
\alpha_6\\
&& \alpha_2 &&
}.
$$

\begin{lemma}
There exist three $6$-dimensional abelian 
$\fkb$-ideals:

$
\fka_{i}=\fka'
\bigoplus \fka'_i\qquad (i=1,2,3),
$

\noindent
where
\begin{itemize}
\item
$\fka'=
\langle 
X_{\substack{12321\\ \phantom{12}2\phantom{21}}},
X_{\substack{12321\\ \phantom{12}1\phantom{21}}},
X_{\substack{12221\\ \phantom{12}1\phantom{21}}},
X_{\substack{11221\\ \phantom{11}1\phantom{21}}},
X_{\substack{12211\\ \phantom{12}1\phantom{11}}}
\rangle,
$
\item
$\fka'_1=\langle 
X_{\substack{01221\\ \phantom{01}1\phantom{21}}}
\rangle$,
\item
$\fka'_2=\langle 
X_{\substack{11211\\ \phantom{11}1\phantom{11}}}
\rangle$,
\item
$\fka'_3=\langle 
X_{\substack{12210\\ \phantom{12}1\phantom{10}}}
\rangle$.
\end{itemize}
\end{lemma}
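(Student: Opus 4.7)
The plan is to classify the $6$-dimensional abelian ideals of $\fkb$ by a direct combinatorial analysis of the $E_6$ positive root poset. By \eqref{eqn:DeltaA}, any such ideal corresponds to an upward-closed subset $\Delta(\fka)\subseteq \Delta^+$ of size $6$ satisfying the abelian condition $\alpha+\beta\notin \Delta^+$ for all $\alpha,\beta\in \Delta(\fka)$. Because every positive root is dominated by the maximal root $\gamma_0$ of height $11$, the set $\Delta(\fka)$ must contain $\gamma_0$.

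From the $E_6$ root data (the exponents $1,4,5,7,8,11$ give that the number of positive roots of height $h$ equals $\#\{\text{exponents}\geq h\}$), the heights $11,10,9$ each contain a unique positive root, height $8$ contains exactly two, and height $7$ contains exactly three; together these comprise the five roots of $\fka'$ and the three roots in $\fka'_1\cup\fka'_2\cup\fka'_3$. A key simplification is that whenever $\hite(\alpha),\hite(\beta)\geq 6$, one has $\hite(\alpha)+\hite(\beta)\geq 12>\hite(\gamma_0)$, so $\alpha+\beta\notin \Delta^+$ and the abelian condition is automatic. The problem therefore reduces to enumerating upward-closed subsets of size exactly $6$ in $(\Delta^+,\leq)$ containing $\gamma_0$.

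I next compute the upward closure $U(\alpha):=\{\beta\in \Delta^+:\beta\geq \alpha\}$ for each root of height $\geq 7$ by inspection of the Hasse diagram. The unique height-$11,10,9$ roots are forced into $\Delta(\fka)$ as soon as anything of height $\leq 8$ is included; the two height-$8$ roots are incomparable to each other but each sits below $\gamma_0$. For the three height-$7$ roots one checks from the cover relations that the height-$7$ root in $\fka'_2$ is covered by \emph{both} height-$8$ roots, while the height-$7$ root in $\fka'_1$ (resp.\ $\fka'_3$) is covered only by the first (resp.\ second) listed height-$8$ root of $\fka'$. Hence $U(\alpha')=\fka'\cup\{\alpha'\}$ of size $6$ when $\alpha'$ is the height-$7$ root of $\fka'_2$, whereas for the other two height-$7$ roots $U(\alpha')$ has size $5$ (missing one height-$8$ root).

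Consequently, any $\Delta(\fka)$ of size $6$ is obtained either as $\fka'\cup\{\alpha'\}$ for $\alpha'$ a height-$7$ root, or by adjoining the missing height-$8$ root to $U(\alpha')$ for the two ``size $5$'' cases; all three options yield $\fka_i=\fka'\oplus\fka'_i$. One finally rules out any root of height $\leq 6$ belonging to $\Delta(\fka)$: its upward closure meets every height from its own up to $11$, contributing at least one root beyond $\fka'$ (in particular a height-$7$ root not yet present), which pushes $|\Delta(\fka)|$ strictly past $6$. The three configurations $\fka_1,\fka_2,\fka_3$ are therefore the complete list. The principal obstacle is the bookkeeping for the cover relations between heights $7$ and $8$ in the $E_6$ Hasse diagram, which determines the sizes of the $U(\alpha')$; beyond this finite check the argument is purely structural.
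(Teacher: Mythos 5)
Your argument is correct and follows essentially the same route as the paper, which simply displays the Hasse diagram of the non-simple positive roots and reads the classification off from it; your version makes explicit the two points the paper leaves implicit, namely that abelianness is automatic because any two roots of height $\geq 6$ sum to height $\geq 12>11=\hite(\gamma_0)$, and that the problem therefore reduces to enumerating $6$-element up-sets containing $\gamma_0$. One step is stated too loosely: for a root $\beta$ of height exactly $6$, ``one root of each height from $6$ up to $11$'' only yields $|\Delta(\fka)|\geq 6$, not $>6$, since a priori the up-set of $\beta$ might contain only one of the two height-$8$ roots. The missing fact, immediate from the same diagram, is that all three height-$6$ roots are covered by the height-$7$ root spanning $\fka'_2$, whose up-set already has six elements; hence the up-set of any height-$6$ root has at least seven elements and no further ideals arise.
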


\begin{proof}
We have the assertion by the following list
of non-simple positive roots:
$$
\xymatrix{
&& 
{\substack{12321\\ \phantom{12}2\phantom{21}}}
\ar@{-}[d] && \\
&&
{\substack{12321\\ \phantom{12}1\phantom{21}}}
\ar@{-}[d] && \\
&& {\substack{12221\\ \phantom{12}1\phantom{21}}}
\ar@{-}[rd] \ar@{-}[ld] &&\\
& {\substack{11221\\ \phantom{11}1\phantom{21}}}
\ar@{-}[rd] \ar@{-}[ld] & &
{\substack{12211\\ \phantom{12}1\phantom{11}}}
\ar@{-}[rd] \ar@{-}[ld] &\\
{\substack{01221\\ \phantom{01}1\phantom{21}}}
\ar@{-}[d] && {\substack{11211\\ \phantom{11}1\phantom{11}}}
\ar@{-}[rrd] \ar@{-}[lld] \ar@{-}[d] &&
{\substack{12210\\ \phantom{12}1\phantom{10}}}
\ar@{-}[d]\\
{\substack{01211\\ \phantom{01}1\phantom{11}}}
\ar@{-}[d] \ar@{-}[rd]
&&
{\substack{11111\\ \phantom{11}1\phantom{11}}}
\ar@{-}[lld]
\ar@{-}[rd]
\ar@{-}[rrd]
&&
{\substack{11210\\ \phantom{11}1\phantom{10}}}
\ar@{-}[d]
\ar@{-}[llld]\\
{\substack{01111\\ \phantom{01}1\phantom{11}}}
\ar@{-}[d] \ar@{-}[rrd]
\ar@{-}[rd]
&
{\substack{01210\\ \phantom{01}1\phantom{10}}}
\ar@{-}[rd]
&&
{\substack{11111\\ \phantom{11}0\phantom{11}}}
\ar@{-}[rd] \ar@{-}[llld]
&
{\substack{11110\\ \phantom{11}1\phantom{10}}}
\ar@{-}[d] \ar@{-}[lld]
\ar@{-}[ld]\\
{\substack{01111\\ \phantom{01}0\phantom{11}}}
\ar@{-}[d] \ar@{-}[rrd]
&
{\substack{00111\\ \phantom{00}1\phantom{11}}}
\ar@{-}[d] 
\ar@{-}[ld]
&
{\substack{01110\\ \phantom{01}1\phantom{10}}}
\ar@{-}[d] 
\ar@{-}[ld]
\ar@{-}[rd] 
&
{\substack{11100\\ \phantom{11}1\phantom{00}}}
\ar@{-}[d] 
\ar@{-}[rd] 
&
{\substack{11110\\ \phantom{11}0\phantom{10}}}
\ar@{-}[d] 
\ar@{-}[lld] \\
{\substack{00111\\ \phantom{00}0\phantom{11}}}
\ar@{-}[d] 
\ar@{-}[rd] 
&
{\substack{00110\\ \phantom{00}1\phantom{10}}}
\ar@{-}[d] 
\ar@{-}[rd] 
&
{\substack{01110\\ \phantom{01}0\phantom{10}}}
\ar@{-}[ld] 
\ar@{-}[rd] 
&
{\substack{01100\\ \phantom{01}1\phantom{00}}}
\ar@{-}[ld] 
\ar@{-}[d] 
&
{\substack{11100\\ \phantom{11}0\phantom{00}}}
\ar@{-}[ld] 
\ar@{-}[d] \\
{\substack{00011\\ \phantom{00}0\phantom{11}}}
&
{\substack{00110\\ \phantom{00}0\phantom{10}}}
&
{\substack{00100\\ \phantom{00}1\phantom{00}}}
&
{\substack{01100\\ \phantom{01}0\phantom{00}}}
&
{\substack{11000\\ \phantom{11}0\phantom{00}}}.
}
$$
\end{proof}

The element $\Lambda:=\sum_{i=1}^6 X_{\alpha_i}$ is regular nilpotent,
and let $J:=\fkz_{\fkg}(\Lambda)$.
By the list above, we have the following lemma:

\begin{lemma}
The following form a basis of $J$:
\begin{itemize}
\item
$f_1:=\Lambda$,
\item
$f_4:=X_{\substack{01111\\ \phantom{01}0\phantom{11}}}
-X_{\substack{00111\\ \phantom{00}1\phantom{11}}}
-X_{\substack{11110\\ \phantom{11}0\phantom{10}}}
+X_{\substack{11100\\ \phantom{11}1\phantom{00}}}$,
\item
$f_5:=X_{\substack{01111\\ \phantom{01}1\phantom{11}}}
-X_{\substack{01210\\ \phantom{01}1\phantom{10}}}
+X_{\substack{11110\\ \phantom{11}1\phantom{10}}}
-2X_{\substack{11111\\ \phantom{11}0\phantom{11}}}$,
\item
$f_7:=X_{\substack{01221\\ \phantom{01}1\phantom{21}}}
-X_{\substack{11211\\ \phantom{11}1\phantom{11}}}
+X_{\substack{12210\\ \phantom{12}1\phantom{10}}}$,
\item
$f_8:=X_{\substack{11221\\ \phantom{11}1\phantom{21}}}
-X_{\substack{12211\\ \phantom{12}1\phantom{11}}}$,
\item
$f_{11}:=X_{\substack{12321\\ \phantom{12}2\phantom{21}}}$.
\end{itemize}
\end{lemma}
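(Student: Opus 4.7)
The plan is to combine Kostant's classical description of the graded structure of $J$ with a direct Chevalley-basis computation. By \cite[Theorem 6.7]{Kostant1959}, the exponents of $E_6$ (counted with multiplicity) are $1,4,5,7,8,11$, so the height grading $J=\bigoplus_j J\cap \fkg_j$ has $\dim(J\cap \fkg_j)=1$ precisely for $j\in\{1,4,5,7,8,11\}$ and vanishes otherwise. Since each $f_k$ in the statement is homogeneous of height $k$, it is enough to check that each $f_k$ is nonzero and annihilated by $\ad\Lambda$; nonvanishing is immediate from linear independence of distinct root vectors.

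The extreme cases are trivial. For $k=1$, $f_1=\Lambda$ commutes with itself. For $k=11$, $f_{11}=X_{\gamma_0}$ is the root vector attached to the maximal root $\gamma_0$, and since $E_6$ has no root of height $12$ we have $[X_{\alpha_i},X_{\gamma_0}]=0$ for every $i$.

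For $k\in\{4,5,7,8\}$, I would compute $\sum_{i=1}^6[X_{\alpha_i},f_k]$ term by term. Since $E_6$ is simply laced, \eqref{eqn:ChevalleyBasis} with $p=0$ reduces each individual bracket to $[X_{\alpha_i},X_\beta]=\epsilon_{i,\beta}\,X_{\alpha_i+\beta}$ with $\epsilon_{i,\beta}\in\{0,\pm 1\}$ dictated by \cite[Proposition 4]{Kurtzke}. Thus $\ad(\Lambda)(f_k)$ is a linear combination of root vectors of height $k+1$, and for each target root $\gamma$ of height $k+1$ one enumerates the decompositions $\gamma=\alpha_i+\beta$ with $\beta\in\supp(f_k)$ and reads off the coefficient as a signed sum. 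I would organize this check target-root by target-root using the Hasse diagram displayed just before the lemma. The shortest case is $f_8$: the only height-$9$ target vectors produced are obtained by adding $\alpha_3$ to the first summand and $\alpha_5$ to the second, and their contributions cancel once the Kurtzke signs are substituted. The cases $f_7$, $f_5$, $f_4$ proceed in the same manner with more branches, and the coefficients $\pm 1,\pm 2$ in their displayed expressions are chosen precisely to enforce the cancellations.

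The main obstacle is purely sign bookkeeping for the constants $\epsilon_{i,\beta}$ from Kurtzke's normalization. Once a short table of these signs for the $(i,\beta)$ pairs meeting $\supp(f_k)$ is laid out, the verification reduces to checking a finite list of identities of the form $\pm 1\pm 1=0$ (at most four terms at each target root). Dimension one of $J\cap\fkg_j$ for $j\in\{1,4,5,7,8,11\}$ then guarantees that the six homogeneous elements $f_1,f_4,f_5,f_7,f_8,f_{11}$ form a basis of $J$.
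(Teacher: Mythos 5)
Your proposal is correct and follows essentially the same route as the paper: the paper also relies on the fact that the heights occurring in $J$ are the exponents of $\fkg$ (here $1,4,5,7,8,11$, each with multiplicity one), and verifies membership of each homogeneous $f_k$ in $J=\fkz_\fkg(\Lambda)$ by inspecting, in the displayed Hasse diagram of positive roots, which height-$(k{+}1)$ roots arise by adding a simple root to the support of $f_k$ and checking that the signed contributions cancel. The paper leaves this check implicit for $E_6$ ("by the list above"), whereas you spell out the same finite sign bookkeeping with the Kurtzke structure constants.
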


\begin{proposition}
Proposition \ref{prop:JtoK} and
Theorem \ref{MainTheorem} hold.
In particular,
$$
\fka_{i}\in
\overline{\Ad(G)J}\qquad (i=1,2,3).
$$
\end{proposition}

\begin{proof}
Since
\begin{eqnarray*}
\ad(X_{\substack{11111\\ \phantom{11}0\phantom{11}}})
(\Lambda)
&=&
-\sum_{i=1}^6 [X_{\alpha_i}, 
X_{\substack{11111\\ \phantom{11}0\phantom{11}}}]
=
-X_{\substack{11111\\ \phantom{11}1\phantom{11}}},\\
\ad(X_{\substack{01210\\ \phantom{01}1\phantom{10}}})
(\Lambda)
&=&
-\sum_{i=1}^6 [X_{\alpha_i}, 
X_{\substack{01210\\ \phantom{01}1\phantom{10}}}]
=
-X_{\substack{11210\\ \phantom{11}1\phantom{10}}}
-X_{\substack{01211\\ \phantom{01}1\phantom{11}}},
\end{eqnarray*}
there exist $a, b$ such that
\begin{eqnarray*}
\ad(a X_{\substack{11111\\ \phantom{11}0\phantom{11}}}
+
b X_{\substack{01210\\ \phantom{01}1\phantom{10}}})
(\Lambda)
&=&
c_1X_{\substack{01211\\ \phantom{01}1\phantom{11}}}
+
c_2X_{\substack{11111\\ \phantom{11}1\phantom{11}}}
+
c_3X_{\substack{11210\\ \phantom{11}1\phantom{10}}},\\
\ad(a X_{\substack{11111\\ \phantom{11}0\phantom{11}}}
+
b X_{\substack{01210\\ \phantom{01}1\phantom{10}}})
(f_4)
&=&
c_4 X_{\substack{12221\\ \phantom{12}1\phantom{21}}},\\
\ad(a X_{\substack{11111\\ \phantom{11}0\phantom{11}}}
+
b X_{\substack{01210\\ \phantom{01}1\phantom{10}}})
(f_5)
&=&
c_5 X_{\substack{12321\\ \phantom{12}1\phantom{21}}}
\end{eqnarray*}
with $c_1,c_2,\ldots,c_5\neq 0$.
Hence
$$
\lim_{t\to 0}\exp t^{-1}
\ad(a X_{\substack{11111\\ \phantom{11}0\phantom{11}}}
+
b X_{\substack{01210\\ \phantom{01}1\phantom{10}}})
(J)=:K
$$
satisfies the condition in Proposition \ref{prop:JtoK}.

Then it is easy to see
$$
\lim_{t\to 0}\exp t^{-1}
\ad(
X_{\substack{00110\\ \phantom{00}0\phantom{10}}}
)(K)
=\fka'\bigoplus \langle f_7\rangle.
$$


We have
\begin{eqnarray*}
\lim_{t\to 0}
\Ad(\lambda_1(t))(
\fka'\bigoplus \langle f_7\rangle)
&=&
\fka_1,\\
\lim_{t\to 0}
\Ad(\lambda_1^{-1}(t)\lambda_6^{-1}(t))(
\fka'\bigoplus \langle f_7\rangle)
&=&
\fka_2,\\
\lim_{t\to 0}
\Ad(\lambda_6(t))(
\fka'\bigoplus \langle f_7\rangle)
&=&
\fka_3.
\end{eqnarray*}
Hence we have proved 
Theorem \ref{MainTheorem}, i.e.,
$$
\fka_1,\fka_2,\fka_3
\in 
\overline{\Ad(G)K}.
$$
\end{proof}

\section{Case $E_7$}

Let $\fkg$ be of $E_7$ type.
The Dynkin diagram is
$$
\xymatrix{
\alpha_1 \ar@{-}[r] & \alpha_3 \ar@{-}[r] &
\alpha_4 \ar@{-}[r] \ar@{-}[d] & \alpha_5 \ar@{-}[r] &
\alpha_6 \ar@{-}[r] &
\alpha_7\\
&& \alpha_2 &&
}.
$$

\begin{lemma}
\label{E7:fka}
There exist three $7$-dimensional abelian 
$\fkb$-ideals:

$
\fka_{i}=
\langle 
X_{\substack{234321\\ \phantom{23}2\phantom{321}}},
X_{\substack{134321\\ \phantom{13}2\phantom{321}}},
X_{\substack{124321\\ \phantom{12}2\phantom{321}}},
X_{\substack{123321\\ \phantom{12}2\phantom{321}}},
X_{\substack{123221\\ \phantom{12}2\phantom{221}}}
\rangle
\bigoplus \fka'_i\qquad (i=1,2,3),
$

\noindent
where
\begin{itemize}
\item
$\fka'_1=\langle 
X_{\substack{123211\\ \phantom{12}2\phantom{211}}},
X_{\substack{123210\\ \phantom{12}2\phantom{210}}}
\rangle$,
\item
$\fka'_2=\langle 
X_{\substack{123321\\ \phantom{12}1\phantom{321}}},
X_{\substack{123211\\ \phantom{12}2\phantom{211}}}
\rangle$,
\item
$\fka'_3=\langle 
X_{\substack{123321\\ \phantom{12}1\phantom{321}}},
X_{\substack{123221\\ \phantom{12}1\phantom{221}}}
\rangle$.
\end{itemize}
\end{lemma}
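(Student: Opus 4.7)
The strategy mirrors the classification carried out for $E_6$. First I would draw the full Hasse diagram of the $63$ positive roots of $E_7$ organized by height (the maximal root $\substack{234321\\ \phantom{23}2\phantom{321}}$ has height $17$), using the Bourbaki labelling. Once the diagram is in hand, the entire argument reduces to combinatorics on $\Delta^+$: abelian ideals of $\fkb$ correspond to subsets $\Delta(\fka)\subseteq \Delta^+$ satisfying \eqref{eqn:DeltaA} together with the abelian condition that $\alpha+\beta \notin \Delta$ for any $\alpha,\beta\in \Delta(\fka)$.

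Second, I would observe that at heights $17,16,15,14,13$ there is a unique positive root (respectively $\substack{234321\\ \phantom{23}2\phantom{321}},\,\substack{134321\\ \phantom{13}2\phantom{321}},\,\substack{124321\\ \phantom{12}2\phantom{321}},\,\substack{123321\\ \phantom{12}2\phantom{321}},\,\substack{123221\\ \phantom{12}2\phantom{221}}$). Since \eqref{eqn:DeltaA} forces $\Delta(\fka)$ to be upward-closed under addition of positive roots, any abelian $\fkb$-ideal of dimension at least $5$ must contain exactly these five roots. This identifies the common factor $\fka'$ in Lemma \ref{E7:fka}.

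Third, I would enumerate the candidates for the two extra roots needed to reach dimension $7$. The next heights are populated by roots such as $\substack{123321\\ \phantom{12}1\phantom{321}}$ (height $12$), $\substack{123211\\ \phantom{12}2\phantom{211}}$ and $\substack{123221\\ \phantom{12}1\phantom{221}}$ (height $12$), and $\substack{123210\\ \phantom{12}2\phantom{210}}$ (height $11$). For each potential pair I would verify the two conditions: (i) closure under addition by positive roots stays inside the proposed $7$-element set (rules out adding, e.g., $\substack{123211\\ \phantom{12}1\phantom{211}}$ without also adding its two covers), and (ii) no two roots in the set sum to a positive root (this rules out pairs like $\{\substack{123221\\ \phantom{12}1\phantom{221}},\,\substack{123211\\ \phantom{12}2\phantom{211}}\}$ because their sum would lie in $\Delta$). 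A case-by-case check against the Hasse diagram then leaves exactly the three possibilities $\fka'_1,\fka'_2,\fka'_3$ listed.

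The main obstacle is the sheer bookkeeping: $E_7$ has many more positive roots than $E_6$ and the local structure near the top of the root poset has several overlapping covering relations, so I would need to be careful when checking closure and abelianness for candidate extensions. The most delicate point is at height $12$, where three roots compete and one must verify that precisely two of the three pairs (those with $\substack{123321\\ \phantom{12}1\phantom{321}}$ as one member, plus the pair $\{\substack{123211\\ \phantom{12}2\phantom{211}},\,\substack{123210\\ \phantom{12}2\phantom{210}}\}$) survive both conditions while the remaining pair is eliminated by the abelianness test. Once this finite check is done, the lemma follows immediately.
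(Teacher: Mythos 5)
Your overall strategy is exactly the paper's: the proof there simply exhibits the Hasse diagram of the positive roots of height greater than $10$ and reads off the upward-closed $7$-element subsets. However, two concrete claims in your outline are false and would derail the execution. First, height $13$ does \emph{not} carry a unique root: both $\substack{123221\\ \phantom{12}2\phantom{221}}$ and $\substack{123321\\ \phantom{12}1\phantom{321}}$ have height $13$ (the latter lies in $\fka_2'$ and $\fka_3'$ but not in $\fka'$). So your justification for why the five roots of $\fka'$ are common to every $7$-dimensional abelian ideal is wrong. The correct reason that $\substack{123221\\ \phantom{12}2\phantom{221}}$ is forced is that a $7$-element upward-closed set must reach height $\leq 12$, and every root of height $\leq 12$ in the relevant part of the diagram lies below $\substack{123221\\ \phantom{12}2\phantom{221}}$; an ideal containing $\substack{123321\\ \phantom{12}1\phantom{321}}$ but avoiding $\substack{123221\\ \phantom{12}2\phantom{221}}$ could have at most $5$ elements.

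Second, the abelianness test never eliminates anything here, contrary to your step (ii): any two roots of height $\geq 11$ sum to height $\geq 22 > 17 = \hite(\gamma_0)$, so no such sum is a root and abelianness is automatic for every $7$-element ideal. In particular the pair $\{\substack{123221\\ \phantom{12}1\phantom{221}},\, \substack{123211\\ \phantom{12}2\phantom{211}}\}$ is \emph{not} ruled out because its sum lies in $\Delta$ (it does not --- its $\alpha_2$-coefficient is $3>2$); it is ruled out because $\substack{123221\\ \phantom{12}1\phantom{221}}$ is covered by $\substack{123321\\ \phantom{12}1\phantom{321}}$, so including it without $\substack{123321\\ \phantom{12}1\phantom{321}}$ violates the closure condition \eqref{eqn:DeltaA}. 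With these two points corrected, the enumeration by upward-closure alone produces exactly $\fka_1,\fka_2,\fka_3$, which is precisely the paper's argument.
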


\begin{proof}
The following is the list of positive roots with height greater than $10$:
$$
\xymatrix{
&
{\substack{234321\\ \phantom{23}2\phantom{321}}}
\ar@{-}[d] & \\
&
{\substack{134321\\ \phantom{13}2\phantom{321}}}
\ar@{-}[d] & \\
&
{\substack{124321\\ \phantom{12}2\phantom{321}}}
\ar@{-}[d] & \\
&
{\substack{123321\\ \phantom{12}2\phantom{321}}}
\ar@{-}[d] 
\ar@{-}[rd]
& \\
&
{\substack{123221\\ \phantom{12}2\phantom{221}}}
\ar@{-}[d] 
\ar@{-}[rd]
& 
{\substack{123321\\ \phantom{12}1\phantom{321}}}
\ar@{-}[d] 
\\
&
{\substack{123211\\ \phantom{12}2\phantom{211}}}
\ar@{-}[d] 
\ar@{-}[ld]
& 
{\substack{123221\\ \phantom{12}1\phantom{221}}}
\ar@{-}[d] 
\ar@{-}[ld]
\\
{\substack{123210\\ \phantom{12}2\phantom{210}}}
&
{\substack{123211\\ \phantom{12}1\phantom{211}}}
&
{\substack{122221\\ \phantom{12}1\phantom{221}}}.
}
$$
Hence the assertion holds.
\end{proof}

The element $\Lambda:=\sum_{i=1}^7 X_{\alpha_i}$
is regular nilpotent,
and let $J:=\fkz_{\fkg}(\Lambda)$.

\begin{lemma}
The following form a basis of $J$:
\begin{itemize}
\item
$f_1:=\Lambda$,
\item
$f_5:=X_{\substack{012100\\ \phantom{01}1\phantom{100}}}
-X_{\substack{111100\\ \phantom{11}1\phantom{100}}}
-X_{\substack{011110\\ \phantom{01}1\phantom{110}}}
+2X_{\substack{111110\\ \phantom{11}0\phantom{110}}}
-2X_{\substack{011111\\ \phantom{01}0\phantom{111}}}
+3X_{\substack{001111\\ \phantom{00}1\phantom{111}}}
$,
\item
$f_7:=X_{\substack{122100\\ \phantom{12}1\phantom{100}}}
-X_{\substack{112110\\ \phantom{11}1\phantom{110}}}
+X_{\substack{012210\\ \phantom{01}1\phantom{210}}}
-X_{\substack{012111\\ \phantom{01}1\phantom{111}}}
+2X_{\substack{111111\\ \phantom{11}1\phantom{111}}}
$,
\item
$f_9:=X_{\substack{122111\\ \phantom{12}1\phantom{111}}}
-X_{\substack{112211\\ \phantom{11}1\phantom{211}}}
+X_{\substack{012221\\ \phantom{01}1\phantom{221}}}$,
\item
$f_{11}:=X_{\substack{123210\\ \phantom{12}2\phantom{210}}}
-X_{\substack{123211\\ \phantom{12}1\phantom{211}}}
+X_{\substack{122221\\ \phantom{12}1\phantom{221}}}
$,
\item
$f_{13}:=X_{\substack{123221\\ \phantom{12}2\phantom{221}}}
-X_{\substack{123321\\ \phantom{12}1\phantom{321}}}
$,
\item
$f_{17}:=X_{\substack{234321\\ \phantom{23}2\phantom{321}}}$.
\end{itemize}
\end{lemma}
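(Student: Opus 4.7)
The plan is to leverage the fact (stated earlier, citing \cite[Theorem 6.7]{Kostant1959}) that the heights appearing in $J=\fkz_\fkg(\Lambda)$ coincide with the exponents of $\fkg$ counting multiplicities. For $E_7$, the exponents are $1,5,7,9,11,13,17$, each with multiplicity one, so $J$ has a one-dimensional component in each of these heights and is zero in all others; in particular $\dim J=7$. Since each proposed $f_i$ is homogeneous of height $i$ and the seven heights are distinct, linear independence is automatic, and it suffices to verify that each $f_i$ actually lies in $J$, i.e., that $[X_{\alpha_j},f_i]=0$ for every simple root $\alpha_j$.

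The computation uses the Chevalley basis relation \eqref{eqn:ChevalleyBasis}: for non-simple $\beta\in \Delta^+$ with $\beta+\alpha_j\in \Delta^+$, $[X_{\alpha_j},X_\beta]=(p+1)X_{\beta+\alpha_j}$, where $p$ is the length of the $\alpha_j$-string from $\beta$ downward. For $f_1=\Lambda$ the claim $[\Lambda,\Lambda]=0$ is immediate. For each remaining $f_i=\sum_\beta c_\beta X_\beta$ and each $j$, one expands
\[
[X_{\alpha_j},f_i]=\sum_\beta c_\beta\,(p_\beta+1)\,X_{\beta+\alpha_j},
\]
grouping the terms by the root $\gamma=\beta+\alpha_j$ at height $i+1$. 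For each such $\gamma$, usually only one or two $\beta$ in $\supp(f_i)$ satisfy $\gamma-\alpha_j\in\supp(f_i)$, and the coefficients $c_\beta$ in the statement of the lemma are chosen precisely so that these contributions cancel. The brackets with $\alpha_j$ for which $\beta+\alpha_j\notin \Delta^+$ contribute zero and can be ignored, which drastically cuts the casework.

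I would organize the verification as follows. First, draw the portion of the root poset of heights $i$ and $i+1$ relevant to $\supp(f_i)$ (this is essentially a subgraph of the Hasse diagram, analogous to the diagram displayed for heights $\ge 11$ in the proof of Lemma \ref{E7:fka}). Next, for each height $i\in\{5,7,9,11,13,17\}$, list the roots $\gamma$ at height $i+1$ that are covers of some root in $\supp(f_i)$, and for each such $\gamma$ and each simple $\alpha_j$ with $\gamma-\alpha_j\in\supp(f_i)$, compute the string parameter $p$. Then read off the cancellation. For $f_{17}=X_{234321\,2}$ the root is the highest root, so $[X_{\alpha_j},f_{17}]=0$ trivially. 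For $f_{13}$ and $f_{11}$ the diagram displayed in Lemma \ref{E7:fka} already records all necessary covers, so the verification is short. For $f_9$, $f_7$, and especially $f_5$ (which has six summands), one has to draw a somewhat larger piece of the root poset and check each target $\gamma$ individually.

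The main obstacle is purely bookkeeping: tracking the $\alpha_j$-strings in $E_7$ to compute the integers $p+1$ correctly, and making sure no covering relation is overlooked. There is nothing conceptually subtle; the coefficients in $f_5,\dots,f_{13}$ (e.g.\ the $2$'s and $3$ appearing in $f_5$) arise precisely from the Cartan integers along the relevant root strings, as dictated by \eqref{eqn:ChevalleyBasis}. Once the cancellations are checked for each $\alpha_j$ and each target root at the next height, we conclude $f_i\in J\cap\fkg_i$, which (being one-dimensional) forces the $f_i$ to span $J$, completing the proof.
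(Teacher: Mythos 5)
Your proposal matches the paper's proof: the paper likewise displays the relevant slices of the root poset at heights $i$ and $i+1$, reads off the cancellations for each $[X_{\alpha_j},f_i]$, and relies on the fact that the heights occurring in $J$ are exactly the exponents $1,5,7,9,11,13,17$ of $E_7$ (each with multiplicity one) to conclude that the $f_i$ form a basis. One small correction to an aside: since $E_7$ is simply laced, every string parameter $p$ in \eqref{eqn:ChevalleyBasis} equals $0$ (as the paper itself notes), so the coefficients $2$ and $3$ in $f_5$ arise from the branching of the Hasse diagram within $\supp(f_5)$ --- several summands being covered by a common root at the next height --- and not from root-string lengths or Cartan integers.
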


\begin{proof}
From the proof of Lemma \ref{E7:fka},
we see $f_{17}, f_{13}, f_{11}\in J$.

The following is the list of positive roots with height $10,9$:
$$
\xymatrix{
{\substack{123210\\ \phantom{12}1\phantom{210}}}
\ar@{-}[d]
&
{\substack{122211\\ \phantom{12}1\phantom{211}}}
\ar@{-}[d]
\ar@{-}[ld]
\ar@{-}[rd]
&
{\substack{112221\\ \phantom{11}1\phantom{221}}}
\ar@{-}[d]
\ar@{-}[rd]
&\\
{\substack{122210\\ \phantom{12}1\phantom{210}}}
&
{\substack{122111\\ \phantom{12}1\phantom{111}}}
&
{\substack{112211\\ \phantom{11}1\phantom{211}}}
&
{\substack{012221\\ \phantom{01}1\phantom{221}}}.
}
$$
Hence $f_9\in J$.

The following is the list of positive roots with height $8,7$:
$$
\xymatrix{
{\substack{122110\\ \phantom{12}1\phantom{110}}}
\ar@{-}[d]
\ar@{-}[rrd]
&
{\substack{112210\\ \phantom{11}1\phantom{210}}}
\ar@{-}[d]
\ar@{-}[rd]
&
{\substack{012211\\ \phantom{01}1\phantom{211}}}
\ar@{-}[ld]
\ar@{-}[rd]
&
{\substack{112111\\ \phantom{11}1\phantom{111}}}
\ar@{-}[ld]
\ar@{-}[rd]
\ar@{-}[d]
&\\
{\substack{122100\\ \phantom{12}1\phantom{100}}}
&
{\substack{012210\\ \phantom{01}1\phantom{210}}}
&
{\substack{112110\\ \phantom{11}1\phantom{110}}}
&
{\substack{012111\\ \phantom{01}1\phantom{111}}}
&
{\substack{111111\\ \phantom{11}1\phantom{111}}}.
}
$$
Hence $f_7\in J$.

The following is the list of positive roots with height $6,5$:
$$
\xymatrix{
{\substack{112100\\ \phantom{11}1\phantom{100}}}
\ar@{-}[d]
\ar@{-}[rd]
&
{\substack{012110\\ \phantom{01}1\phantom{110}}}
\ar@{-}[ld]
\ar@{-}[rrd]
&
{\substack{111110\\ \phantom{11}1\phantom{110}}}
\ar@{-}[ld]
\ar@{-}[d]
\ar@{-}[rd]
&
{\substack{111111\\ \phantom{11}0\phantom{111}}}
\ar@{-}[ld]
\ar@{-}[rd]
&
{\substack{011111\\ \phantom{01}1\phantom{111}}}
\ar@{-}[ld]
\ar@{-}[rd]
\ar@{-}[d]
&\\
{\substack{012100\\ \phantom{01}1\phantom{100}}}
&
{\substack{111100\\ \phantom{11}1\phantom{100}}}
&
{\substack{111110\\ \phantom{11}0\phantom{110}}}
&
{\substack{011110\\ \phantom{01}1\phantom{110}}}
&
{\substack{011111\\ \phantom{01}0\phantom{111}}}
&
{\substack{001111\\ \phantom{00}1\phantom{111}}}.
}
$$
Hence $f_5\in J$.
\end{proof}

\begin{proposition}
Proposition \ref{prop:JtoK} and
Theorem \ref{MainTheorem} hold.
In particular,
$$
\fka_{i}\in
\overline{\Ad(G)J}\qquad (i=1,2,3).
$$
\end{proposition}

\begin{proof}
By considering heights, we see
\begin{eqnarray*}
&&\lim_{t\to 0}\exp(t^{-1}
\ad(X_{\substack{124321\\ \phantom{12}2\phantom{321}}}
))(J)\\
&=&
\langle
f_5,f_7,f_9,f_{11},f_{13},
X_{\substack{134321\\ \phantom{13}2\phantom{321}}},
f_{17}
\rangle=:J_1.
\end{eqnarray*}
Since 
$f_{17}\in J_1$, we have
\begin{eqnarray*}
&&\lim_{t\to 0}\exp(t^{-1}
\ad(X_{\substack{123210\\ \phantom{12}1\phantom{210}}}
))(J_1)\\
&=&
\langle
f_7,f_9,f_{11},f_{13},
X_{\substack{124321\\ \phantom{12}2\phantom{321}}},
X_{\substack{134321\\ \phantom{13}2\phantom{321}}},
f_{17}
\rangle=:J_2.
\end{eqnarray*}
Since 
$X_{\substack{134321\\ \phantom{13}2\phantom{321}}}\in J_2$, we have
\begin{eqnarray*}
&&\lim_{t\to 0}\exp(t^{-1}
\ad(X_{\substack{012210\\ \phantom{01}1\phantom{210}}}
))(J_2)\\
&=&
\langle
f_9,f_{11},f_{13},
X_{\substack{123321\\ \phantom{12}2\phantom{321}}},
X_{\substack{124321\\ \phantom{12}2\phantom{321}}},
X_{\substack{134321\\ \phantom{13}2\phantom{321}}},
f_{17}
\rangle=:J_3.
\end{eqnarray*}
As in Case $E_6$,
there exist $a, b$ such that
\begin{eqnarray*}
&&
\lim_{t\to 0}\exp(t^{-1}
\ad(a X_{\substack{001100\\ \phantom{00}1\phantom{100}}}
+ b
X_{\substack{111000\\ \phantom{11}0\phantom{000}}}
))(J_3)\\
&=&
\langle
X_\alpha\,|\, \hite(\alpha)\geq 14
\rangle
\bigoplus
\langle
f_{13}, \Lambda^{(12)}, f_{11}
\rangle
=K
\end{eqnarray*}
satisfies the condition in Proposition \ref{prop:JtoK}.

Then
$$
\lim_{t\to 0}\Ad(\lambda_2^{-1}(t))(K)
=\fka_1.
$$

We have
$$
\lim_{t\to 0}\exp(t^{-1}
\ad(X_{\substack{000011\\ \phantom{00}0\phantom{011}}}))
(K)
=
\langle
X_\alpha\,|\, \hite(\alpha)\geq 13
\rangle
\bigoplus
\langle \Lambda^{(12)}\rangle,
$$
and then
\begin{eqnarray*}
\lim_{t\to 0}\Ad(\lambda_2^{-1}(t))
(\langle
X_\alpha\,|\, \hite(\alpha)\geq 13
\rangle
\bigoplus
\langle \Lambda^{(12)}\rangle)
&=&
\fka_2,\\
\lim_{t\to 0}\Ad(\lambda_2(t))
(\langle
X_\alpha\,|\, \hite(\alpha)\geq 13
\rangle
\bigoplus
\langle \Lambda^{(12)}\rangle)
&=&
\fka_3.
\end{eqnarray*}
We have thus proved
$$
\fka_1,\fka_2,\fka_3
\in 
\overline{\Ad(G)K}.
$$
\end{proof}

\section{Case $E_8$}

Let $\fkg$ be of $E_8$ type.
The Dynkin diagram is
$$
\xymatrix{
\alpha_1 \ar@{-}[r] & \alpha_3 \ar@{-}[r] &
\alpha_4 \ar@{-}[r] \ar@{-}[d] & \alpha_5 \ar@{-}[r] &
\alpha_6 \ar@{-}[r] &
\alpha_7 \ar@{-}[r] &
\alpha_8\\
&& \alpha_2 &&
}.
$$

\begin{lemma}
\label{E8:fka}
There exist two $8$-dimensional abelian 
$\fkb$-ideals:

$\fka_1=\fka'\bigoplus \langle X_{\substack{1354321\\ \phantom{13}3\phantom{4321}}}\rangle,
\quad
\fka_2=\fka'\bigoplus \langle X_{\substack{2454321\\ \phantom{24}2\phantom{4321}}}\rangle,
$ where
\newline
$\fka':=
\langle 
X_{\substack{2465432\\ \phantom{24}3\phantom{5432}}},
X_{\substack{2465431\\ \phantom{24}3\phantom{5431}}},
X_{\substack{2465421\\ \phantom{24}3\phantom{5421}}},
X_{\substack{2465321\\ \phantom{24}3\phantom{5321}}},
X_{\substack{2464321\\ \phantom{24}3\phantom{4321}}},
X_{\substack{2454321\\ \phantom{24}3\phantom{4321}}},
X_{\substack{2354321\\ \phantom{23}3\phantom{4321}}}
\rangle.
$
\end{lemma}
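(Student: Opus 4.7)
The plan is to follow the template already used for $E_6$ and $E_7$: exhibit the Hasse diagram of positive roots at the top of the root poset of $E_8$ and read off the $8$-dimensional abelian $\fkb$-ideals directly.

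First, I would invoke the classical fact that the number of positive roots of height $k$ in a simple Lie algebra equals the number of exponents that are at least $k$. Since the exponents of $E_8$ are $1, 7, 11, 13, 17, 19, 23, 29$, there is exactly one positive root at each of the heights $24, 25, 26, 27, 28, 29$, and exactly two at each of the heights $20, 21, 22, 23$. Any $8$-dimensional abelian ideal $\fka$ is, by \eqref{eqn:DeltaA}, an upper set for the dominance order $\leq$ on $\Delta^+$; hence $\Delta(\fka)$ must contain the unique root at each height $24, \ldots, 29$, and these six roots form a saturated chain ending at $\gamma_0$.

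Second, I would rule out the presence of any root of height $\leq 21$ in $\Delta(\fka)$ by a chain-length bound: for any $\rho\in\Delta^+$ of height $h$, the upper closure $U(\rho)=\{\beta\in\Delta^+:\beta\geq\rho\}$ contains every saturated chain from $\rho$ up to $\gamma_0$, so $|U(\rho)|\geq \hite(\gamma_0)-h+1=30-h$. Consequently a root of height $\leq 21$ in $\Delta(\fka)$ would force $|\Delta(\fka)|\geq 9$, contradicting $\dim\fka=8$. The two remaining roots of $\fka$ therefore lie among the four positive roots of heights $22$ and $23$.

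Third, I would enumerate these four roots and their covering relations explicitly, in the style of the diagrams of the $E_6$ and $E_7$ sections. Direct computation gives the two roots of height $23$,
$\sigma_1=\substack{2354321\\ \phantom{23}3\phantom{4321}}$ and $\sigma_2=\substack{2454321\\ \phantom{24}2\phantom{4321}}$, and the two roots of height $22$, $\tau_1=\substack{1354321\\ \phantom{13}3\phantom{4321}}$ and $\tau_2=\substack{2354321\\ \phantom{23}2\phantom{4321}}$, with cover relations $\tau_1<\sigma_1$, $\tau_2<\sigma_1$, and $\tau_2<\sigma_2$, while $\tau_1\not<\sigma_2$. The $2$-element upper subsets of this $4$-element poset are therefore exactly $\{\sigma_1,\sigma_2\}$ and $\{\sigma_1,\tau_1\}$, which together with the chain at heights $24$--$29$ yield precisely the ideals $\fka_2$ and $\fka_1$.

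The abelian condition is automatic for both candidates, since any two roots involved have height $\geq 22$, whence their sum has height $\geq 44 > 29=\hite(\gamma_0)$ and cannot be a root. The main obstacle is the concrete combinatorial bookkeeping—listing the positive roots of heights $22, 23$ of $E_8$ and verifying their covering relations—which is routine but must be carried out carefully, in the same style as the $E_7$ case of the previous section.
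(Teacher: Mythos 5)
Your proof is correct and takes essentially the same route as the paper, which simply displays the Hasse diagram of the positive roots of height greater than $21$ and reads off the two $8$-element upper sets. The only difference is that you make explicit the counting that the paper leaves implicit — the height distribution via the exponents of $E_8$ and the chain-length bound $|U(\rho)|\geq 30-\hite(\rho)$ excluding roots of height $\leq 21$ — and your root data and covering relations at heights $22$ and $23$ agree with the paper's diagram.
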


\begin{proof}
The following is the list of positive roots 
with heights greater than $21$:
$$
\xymatrix{
& {\substack{2465432\\ \phantom{24}3\phantom{5432}}}
\ar@{-}[d]
\\
& {\substack{2465431\\ \phantom{24}3\phantom{5431}}}
\ar@{-}[d]
\\
& {\substack{2465421\\ \phantom{24}3\phantom{5421}}}
\ar@{-}[d]
\\
& {\substack{2465321\\ \phantom{24}3\phantom{5321}}}
\ar@{-}[d]
\\
& {\substack{2464321\\ \phantom{24}3\phantom{4321}}}
\ar@{-}[d]
\\
& {\substack{2454321\\ \phantom{24}3\phantom{4321}}}
\ar@{-}[d]
\ar@{-}[ld]
\\
{\substack{2454321\\ \phantom{24}2\phantom{4321}}}
\ar@{-}[d]
& 
{\substack{2354321\\ \phantom{23}3\phantom{4321}}}
\ar@{-}[d]
\ar@{-}[ld]
\\
{\substack{2354321\\ \phantom{23}2\phantom{4321}}}
&
{\substack{1354321\\ \phantom{13}3\phantom{4321}}}.
}
$$
Hence the assertion holds.
\end{proof}

The element $\Lambda:=\sum_{i=1}^8 X_{\alpha_i}$
is regular nilpotent,
and let $J:=\fkz_{\fkg}(\Lambda)$.

\begin{lemma}
The following form a basis of $J$:
\begin{itemize}
\item
$f_1:=\Lambda$,
\item
$f_7:=X_{\substack{1221000\\ \phantom{12}1\phantom{1000}}}
-X_{\substack{1121100\\ \phantom{11}1\phantom{1100}}}
+X_{\substack{0122100\\ \phantom{01}1\phantom{2100}}}
-X_{\substack{0121110\\ \phantom{01}1\phantom{1110}}}
+2X_{\substack{1111110\\ \phantom{11}1\phantom{1110}}}
-2X_{\substack{1111111\\ \phantom{11}0\phantom{1111}}}
+X_{\substack{0111111\\ \phantom{01}1\phantom{1111}}}$,
\item
$f_{11}:=X_{\substack{1232100\\ \phantom{12}2\phantom{2100}}}
-X_{\substack{1232110\\ \phantom{12}1\phantom{2110}}}
+X_{\substack{1222210\\ \phantom{12}1\phantom{2210}}}
+X_{\substack{1222111\\ \phantom{12}1\phantom{2111}}}
-2X_{\substack{1122211\\ \phantom{11}1\phantom{2211}}}
+2X_{\substack{0122221\\ \phantom{01}1\phantom{2221}}}$,
\item
$f_{13}:=
X_{\substack{1222221\\ \phantom{12}1\phantom{2221}}}
-X_{\substack{1232211\\ \phantom{12}1\phantom{2211}}}
+X_{\substack{1233210\\ \phantom{12}1\phantom{3210}}}
-X_{\substack{1232210\\ \phantom{12}2\phantom{2210}}}
+2X_{\substack{1232111\\ \phantom{12}2\phantom{2111}}}$,
\item
$f_{17}:=X_{\substack{2343210\\ \phantom{23}2\phantom{3210}}}
-X_{\substack{1343211\\ \phantom{13}2\phantom{3211}}}
+X_{\substack{1243221\\ \phantom{12}2\phantom{3221}}}
-X_{\substack{1233321\\ \phantom{12}2\phantom{3321}}}$,
\item
$f_{19}:=X_{\substack{2343221\\ \phantom{23}2\phantom{3221}}}
-X_{\substack{1343321\\ \phantom{13}2\phantom{3321}}}
+X_{\substack{1244321\\ \phantom{12}2\phantom{4321}}}$,
\item
$f_{23}:=X_{\substack{2454321\\ \phantom{24}2\phantom{4321}}}
-X_{\substack{2354321\\ \phantom{23}3\phantom{4321}}}$,
\item
$f_{29}:=X_{\substack{2465432\\ \phantom{24}3\phantom{5432}}}$.
\end{itemize}
\end{lemma}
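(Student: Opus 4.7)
The plan is to invoke the fact, which the paper uses throughout and which follows from \cite[Theorem 6.7]{Kostant1959}, that $J=\fkz_\fkg(\Lambda)$ is graded by height with graded dimensions equal to the multiplicities of the exponents of $\fkg$. For $E_8$, the exponents are $1, 7, 11, 13, 17, 19, 23, 29$, each with multiplicity one, so $\dim(J\cap\fkg_k)=1$ exactly for $k\in\{1,7,11,13,17,19,23,29\}$ and $J\cap\fkg_k=0$ otherwise. It therefore suffices to exhibit, for each such $k$, a nonzero $f_k\in\fkg_k$ with $[\Lambda,f_k]=0$.

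The cases $f_1=\Lambda$ and $f_{29}=X_{\gamma_0}$ (where $\gamma_0$ is the maximal root $2465432/3$) are trivial: the first because $\Lambda$ commutes with itself, the second because $\gamma_0+\alpha_i\notin\Delta$ for every simple $\alpha_i$. For each remaining $k\in\{7,11,13,17,19,23\}$, write a candidate $f_k=\sum_{\hite(\beta)=k}c_\beta X_\beta$ and impose $[X_{\alpha_i},f_k]=0$ for all simple $\alpha_i$. Using \eqref{eqn:ChevalleyBasis}, each such equation becomes
$$
\sum_{\substack{\hite(\beta)=k\\ \beta+\alpha_i\in\Delta^+}}
(p(\beta,i)+1)\,c_\beta\, X_{\beta+\alpha_i}=0,
$$
where $p(\beta,i)$ is the nonnegative integer with $\beta-p\alpha_i\in\Delta^+$ and $\beta-(p+1)\alpha_i\notin\Delta^+$. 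This gives a homogeneous linear system in the $c_\beta$ indexed by roots of height $k+1$ that lie in $\ad(\fkn)\fkg_k$. The existence of a one-dimensional solution space is guaranteed abstractly; the listed $f_k$ is our chosen generator.

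Concretely, I would organize the verification exactly as in the $E_6$ and $E_7$ cases above: draw the Hasse-type diagram of positive roots at heights $k$ and $k+1$ together with the edges $\beta\to\beta+\alpha_i$, read off the values of $p(\beta,i)$ from the $\alpha_i$-strings, and then check bracket-by-bracket that every coefficient of every root vector at height $k+1$ produced by $\ad X_{\alpha_j}$ applied to the displayed $f_k$ vanishes. For instance, for $f_7$ one verifies that for each $j=1,\dots,8$, pairing the six summands against the simple root vector $X_{\alpha_j}$ produces pairs of nonzero brackets that cancel; the three alternating sign patterns and the coefficients $\pm 2$ are precisely what makes the cancellation work.

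The main obstacle is not conceptual but bookkeeping: one must correctly enumerate the positive roots of $E_8$ at heights up to $24$, identify which pairs $(\beta,\alpha_i)$ give $\beta+\alpha_i\in\Delta^+$, determine the structure constant $p(\beta,i)+1$ in each such case (noting that $p$ can be $0$ or $1$ in $E_8$, unlike the simply-laced cases $E_6, E_7$ where $p=0$), and then check the cancellations for all $8$ simple roots on each of the six nontrivial $f_k$. Because $J$ has rank $8$ and the relevant heights go up to $29$, the tables are larger than in Sections $8$ and $9$, but the verification is of the same nature and can be carried out by the same method, guided at each height by the incidence diagram of adjacent-height roots.
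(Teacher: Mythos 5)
Your proposal is correct and is essentially the paper's own argument: since the exponents of $E_8$ are $1,7,11,13,17,19,23,29$, each of multiplicity one, $\dim(J\cap\fkg_k)=1$ exactly for those heights $k$, so it suffices to exhibit one nonzero $\Lambda$-centralizing element in each such graded piece, and the paper verifies this exactly as you propose, by listing the positive roots at heights $k$ and $k+1$ together with their incidences under addition of simple roots and checking the cancellations. One small correction: $E_8$ is simply-laced, so by \eqref{eqn:ChevalleyBasis} (``$p=0$ in the case $E$'') one always has $[X_{\alpha_i},X_\beta]=X_{\beta+\alpha_i}$ for non-simple positive $\beta$ with $\beta+\alpha_i\in\Delta^+$, contrary to your parenthetical claim that $p$ can equal $1$ in $E_8$; this only simplifies the bookkeeping you describe and does not affect the validity of the method.
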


\begin{proof}
By the proof of Lemma \ref{E8:fka},
$f_{23}\in J$.

The following is the list of roots with height $20, 19$:
$$
\xymatrix{
{\substack{2343321\\ \phantom{23}2\phantom{3321}}}
\ar@{-}[d]
\ar@{-}[rd]
&
{\substack{1344321\\ \phantom{13}2\phantom{4321}}}
\ar@{-}[d]
\ar@{-}[rd]
&
\\
{\substack{2343221\\ \phantom{23}2\phantom{3221}}}
&
{\substack{1343321\\ \phantom{13}2\phantom{3321}}}
&
{\substack{1244321\\ \phantom{12}2\phantom{4321}}}.
}
$$
Hence $f_{19}\in J$.
The following is the list of roots with height $18, 17$:
$$
\xymatrix{
&
{\substack{2343211\\ \phantom{23}2\phantom{3211}}}
\ar@{-}[d]
\ar@{-}[ld]
&
{\substack{1343221\\ \phantom{13}2\phantom{3221}}}
\ar@{-}[d]
\ar@{-}[ld]
&
{\substack{1243321\\ \phantom{12}2\phantom{3321}}}
\ar@{-}[d]
\ar@{-}[ld]
\\
{\substack{2343210\\ \phantom{23}2\phantom{3210}}}
&
{\substack{1343211\\ \phantom{13}2\phantom{3211}}}
&
{\substack{1243221\\ \phantom{12}2\phantom{3221}}}
&
{\substack{1233321\\ \phantom{12}2\phantom{3321}}}.
}
$$
Hence $f_{17}\in J$.

The following is the list of roots with height $14, 13$:
$$
\xymatrix{
{\substack{1233210\\ \phantom{12}2\phantom{3210}}}
\ar@{-}[d]
\ar@{-}[rd]
&
{\substack{1232211\\ \phantom{12}2\phantom{2211}}}
\ar@{-}[ld]
\ar@{-}[rd]
\ar@{-}[rrd]
&
{\substack{1233211\\ \phantom{12}1\phantom{3211}}}
\ar@{-}[ld]
\ar@{-}[rd]
&
{\substack{1232221\\ \phantom{12}1\phantom{2221}}}
\ar@{-}[d]
\ar@{-}[rd]
&
\\
{\substack{1232210\\ \phantom{12}2\phantom{2210}}}
&
{\substack{1233210\\ \phantom{12}1\phantom{3210}}}
&
{\substack{1232111\\ \phantom{12}2\phantom{2111}}}
&
{\substack{1232211\\ \phantom{12}1\phantom{2211}}}
&
{\substack{1222221\\ \phantom{12}1\phantom{2221}}}.
}
$$
Hence $f_{13}\in J$.

The following is the list of roots with height $12, 11$:
$$
\xymatrix{
{\substack{1232210\\ \phantom{12}1\phantom{2210}}}
\ar@{-}[rd]
\ar@{-}[rrd]
&
{\substack{1232110\\ \phantom{12}2\phantom{2110}}}
\ar@{-}[rd]
\ar@{-}[ld]
&
{\substack{1232111\\ \phantom{12}1\phantom{2111}}}
\ar@{-}[rd]
\ar@{-}[d]
&
{\substack{1222211\\ \phantom{12}1\phantom{2211}}}
\ar@{-}[rd]
\ar@{-}[d]
\ar@{-}[lld]
&
{\substack{1122221\\ \phantom{11}1\phantom{2221}}}
\ar@{-}[rd]
\ar@{-}[d]
&
\\
{\substack{1232100\\ \phantom{12}2\phantom{2100}}}
&
{\substack{1222210\\ \phantom{12}1\phantom{2210}}}
&
{\substack{1232110\\ \phantom{12}1\phantom{2110}}}
&
{\substack{1222111\\ \phantom{12}1\phantom{2111}}}
&
{\substack{1122211\\ \phantom{11}1\phantom{2211}}}
&
{\substack{0122221\\ \phantom{01}1\phantom{2221}}}.
}
$$
Hence $f_{11}\in J$.

The following is the list of roots with height $8,7$:
{\tiny
$$
\xymatrix{
{\substack{1221100\\ \phantom{12}1\phantom{1100}}}
\ar@{-}[rd]
\ar@{-}[d]\!
&\!
{\substack{1122100\\ \phantom{11}1\phantom{2100}}}
\ar@{-}[rd]
\ar@{-}[d]\!
&\!
{\substack{1121110\\ \phantom{11}1\phantom{1110}}}
\ar@{-}[rd]
\ar@{-}[ld]
\ar@{-}[rrd]\!
&\!
{\substack{0122110\\ \phantom{01}1\phantom{2110}}}
\ar@{-}[d]
\ar@{-}[ld]\!
&\!
{\substack{0121111\\ \phantom{01}1\phantom{1111}}}
\ar@{-}[rrd]
\ar@{-}[ld]\!
&\!
{\substack{1111111\\ \phantom{11}1\phantom{1111}}}
\ar@{-}[d]
\ar@{-}[ld]\!
&\!
\\
{\substack{1221000\\ \phantom{12}1\phantom{1000}}}\!
&\!
{\substack{1121100\\ \phantom{11}1\phantom{1100}}}\!
&\!
{\substack{0122100\\ \phantom{01}1\phantom{2100}}}\!
&\!
{\substack{0121110\\ \phantom{01}1\phantom{1110}}}\!
&\!
{\substack{1111110\\ \phantom{11}1\phantom{1110}}}\!
&\!
{\substack{1111111\\ \phantom{11}0\phantom{1111}}}\!
&\!\!
{\substack{0111111\\ \phantom{01}1\phantom{1111}}}.
}
$$
}
Hence $f_{7}\in J$.
\end{proof}

\begin{proposition}
Proposition \ref{prop:JtoK} and
Theorem \ref{MainTheorem} hold.
In particular,
$$
\fka_{i}\in
\overline{\Ad(G)J}\qquad (i=1,2).
$$
\end{proposition}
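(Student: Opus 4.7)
The plan is to follow exactly the template established in the $E_6$ and $E_7$ cases: starting from the explicit basis $\{f_1, f_7, f_{11}, f_{13}, f_{17}, f_{19}, f_{23}, f_{29}\}$ of $J$, I would apply a carefully chosen sequence of unipotent deformations $\exp(t^{-1}\ad X_\beta)$ and toric deformations $\Ad(\lambda_j^{\pm 1}(t))$ guided by Lemmas \ref{UnipotentDeformation} and \ref{ToricDeformation}. Each $f_i$ is a linear combination of root vectors of a fixed height; a unipotent deformation by an appropriate $X_\beta$ of large height will, by inspection of the bracket structure, send the entire sum either to zero or to a vector supported at a single higher-height root, thereby moving $f_i$ toward a pure $\Lambda^{(i')}$-type element.

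The first stage is to raise the supports of $f_{29-k}$ (for small $k$) to the topmost heights $29, 28, \ldots, 23$. Concretely, I expect to apply in sequence unipotent deformations by $X_\beta$ for roots $\beta$ such as $X_{\substack{1343211\\ \phantom{13}2\phantom{3211}}}$, $X_{\substack{1232110\\ \phantom{12}2\phantom{2110}}}$, $X_{\substack{0122110\\ \phantom{01}1\phantom{2110}}}$, and so on, each time monitoring which terms of the $f_i$ survive under the height comparison. After each step, any basis element that already has height at least $23$ is preserved (since brackets with a low-height element typically produce root vectors of height $\geq 23$, which already lie in the current space and hence contribute nothing new by Lemma \ref{UnipotentDeformation}(2)).

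The second stage is to produce an explicit $K$ meeting Proposition \ref{prop:JtoK}. After the first stage I expect to arrive at a space of the form $\langle X_\alpha\,|\,\hite(\alpha)\geq 23\rangle \oplus \langle \Lambda^{(22)}\rangle$, where $\Lambda^{(22)}$ is a nonzero linear combination of the two root vectors $X_{\substack{1354321\\ \phantom{13}3\phantom{4321}}}$ and $X_{\substack{2454321\\ \phantom{24}2\phantom{4321}}}$ of height $22$. Applying one final combined unipotent deformation $\exp(t^{-1}\ad(a X_{\beta_1} + b X_{\beta_2}))$ for a suitable pair of low-height roots $\beta_1, \beta_2$ and coefficients $a,b$, chosen so that the induced brackets hit every height-$22$ root and every height-$23$ root with nonzero coefficient, produces a $K$ of the form prescribed by Proposition \ref{prop:JtoK}.

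The third and final stage separates $\fka_1$ from $\fka_2$. Since $\fka_1$ and $\fka_2$ differ only in which single height-$22$ or height-$23$ root vector they contain outside of $\fka'$, I apply toric deformations $\Ad(\lambda_j^{\pm 1}(t))$ for appropriately chosen simple roots $\alpha_j$: by Lemma \ref{ToricDeformation}, the coefficient of $\alpha_j$ in each root of the combination determines which summand survives in the limit. Inspecting the coefficients of $\alpha_1$ versus $\alpha_8$ in the roots $1354321\,3$ and $2454321\,2$ immediately suggests the correct choices of $\lambda_j$. The main obstacle I anticipate is purely computational: verifying that the combined unipotent deformation in the second stage picks out a genuinely nonzero linear combination of the right height-$22$ root vectors (so that $\Lambda^{(22)}$ in $K$ is not accidentally a single root vector), which requires tracking the Chevalley structure constants \eqref{eqn:ChevalleyBasis} through several brackets involving the long elements $f_{11}$ and $f_{13}$.
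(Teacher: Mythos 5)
Your overall template --- a chain of unipotent deformations pushing the $f_i$ up to the top heights, one final combined deformation producing $K$, and then a short separation step --- is indeed the strategy the paper follows, but two of your steps contain genuine errors. First, your intermediate space $\langle X_\alpha\mid\hite(\alpha)\geq 23\rangle\oplus\langle\Lambda^{(22)}\rangle$ is $9$-dimensional (in $E_8$ there are two roots at each of the heights $22$ and $23$ and one at each height from $24$ to $29$), so it cannot arise as a limit of the $8$-dimensional $J$ in $\Gr(8,\fkg)$, nor does it have the shape required by Proposition \ref{prop:JtoK}. The correct $K$ is $\langle X_\alpha\mid\hite(\alpha)\geq 24\rangle\oplus\langle f_{23},\Lambda^{(22)}\rangle$: the height-$23$ part is the single line spanned by $f_{23}=X_{\substack{2454321\\ \phantom{24}2\phantom{4321}}}-X_{\substack{2354321\\ \phantom{23}3\phantom{4321}}}$, which conveniently survives the entire deformation chain untouched, and is not the two-dimensional span of both height-$23$ root vectors.

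Second, and more seriously, your third stage cannot reach $\fka_2$. That ideal equals $\fka'\oplus\langle X_{\substack{2454321\\ \phantom{24}2\phantom{4321}}}\rangle$, so it contains \emph{both} height-$23$ root vectors and \emph{no} nonzero vector of height $22$. Any one-parameter subgroup of the maximal torus preserves each root space, so the limit of the line $\C\Lambda^{(22)}$ under $\Ad(\lambda^{\m}(t))$ is again a nonzero line inside $\bigoplus_{\hite(\alpha)=22}\fkg_\alpha$; by Lemma \ref{lemma:ElementVsSpace} every toric limit of $K$ (and, iterating, every space obtained from $K$ by successive toric limits) meets the height-$22$ subspace nontrivially, hence is never $\fka_2$. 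The paper instead uses the \emph{unipotent} deformation $\exp(t^{-1}\ad X_{\alpha_3})$, which raises $\Lambda^{(22)}$ to $X_{\substack{2454321\\ \phantom{24}2\phantom{4321}}}$ while fixing the rest. Your heuristic of comparing $\alpha_1$- against $\alpha_8$-coefficients is also off target: all four roots at heights $22$ and $23$ have $\alpha_8$-coefficient $1$, and the two height-$23$ roots share the $\alpha_1$-coefficient $2$; the node that separates both pairs is $\alpha_2$ (coefficient $2$ versus $3$), and $\Ad(\lambda_2^{-1}(t))$ is what produces $\fka_1$. Finally, the first stage is only a plan: the sample roots you name do not in fact effect the stated height-raising (for instance, for $\beta$ of height $17$ the bracket $[X_\beta,f_{11}]$ must land in the one-dimensional height-$28$ root space, and the unique summand of $f_{11}$ that could pair with your $\beta={\substack{1343211\\ \phantom{13}2\phantom{3211}}}$ to reach it does not occur in $f_{11}$, so that bracket vanishes), and all of the Chevalley-constant verifications are deferred. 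As written the proposal is a reasonable outline of the paper's method but not yet a proof, and its final step would fail for $\fka_2$.
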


\begin{proof}
By considering height, we see
\begin{eqnarray*}
&&\lim_{t\to 0}\exp(t^{-1}
\ad(X_{\substack{2465421\\ \phantom{24}3\phantom{5421}}}
))(J)\\
&=&
\langle
f_7,f_{11},f_{13},f_{17}, f_{19}, f_{23},
X_{\substack{2465431\\ \phantom{24}3\phantom{5431}}},
f_{29}
\rangle=:J_1,
\end{eqnarray*}
and
\begin{eqnarray*}
&&\lim_{t\to 0}\exp(t^{-1}
\ad(X_{\substack{2343321\\ \phantom{23}2\phantom{3321}}}
))(J_1)\\
&=&
\langle
f_{11},f_{13},f_{17}, f_{19}, f_{23},
X_{\substack{2465421\\ \phantom{24}3\phantom{5421}}},
X_{\substack{2465431\\ \phantom{24}3\phantom{5431}}},
f_{29}
\rangle=:J_2.
\end{eqnarray*}
Since $X_{\substack{2465431\\ \phantom{24}3\phantom{5431}}}
\in J_2$,
\begin{eqnarray*}
&&\lim_{t\to 0}\exp(t^{-1}
\ad(X_{\substack{1232100\\ \phantom{12}2\phantom{2100}}}
))(J_2)\\
&=&
\langle
f_{13},f_{17}, f_{19}, f_{23},
X_{\substack{2465321\\ \phantom{24}3\phantom{5321}}},
X_{\substack{2465421\\ \phantom{24}3\phantom{5421}}},
X_{\substack{2465431\\ \phantom{24}3\phantom{5431}}},
f_{29}
\rangle=:J_3.
\end{eqnarray*}
Since $f_{29}
\in J_3$,
\begin{eqnarray*}
&&\lim_{t\to 0}\exp(t^{-1}
\ad(X_{\substack{1232111\\ \phantom{12}2\phantom{2111}}}
))(J_3)\\
&=&
\langle
f_{17}, f_{19}, f_{23},
X_{\substack{2464321\\ \phantom{24}3\phantom{4321}}},
X_{\substack{2465321\\ \phantom{24}3\phantom{5321}}},
X_{\substack{2465421\\ \phantom{24}3\phantom{5421}}},
X_{\substack{2465431\\ \phantom{24}3\phantom{5431}}},
f_{29}
\rangle=:J_4.
\end{eqnarray*}
Since $X_{\substack{2465321\\ \phantom{24}3\phantom{5321}}}
\in J_4$,
\begin{eqnarray*}
&&\lim_{t\to 0}\exp(t^{-1}
\ad(X_{\substack{1111110\\ \phantom{11}1\phantom{1110}}}
))(J_4)\\
&=&
\langle
f_{19}, f_{23},
X_{\substack{2454321\\ \phantom{24}3\phantom{4321}}},
X_{\substack{2464321\\ \phantom{24}3\phantom{4321}}},
X_{\substack{2465321\\ \phantom{24}3\phantom{5321}}},
X_{\substack{2465421\\ \phantom{24}3\phantom{5421}}},
X_{\substack{2465431\\ \phantom{24}3\phantom{5431}}},
f_{29}
\rangle=:J_5.
\end{eqnarray*}

As in Case $E_6$,
there exist $a, b$ such that
\begin{eqnarray*}
&&
\lim_{t\to 0}
\exp (t^{-1}\ad
(a X_{\substack{0011100\\ \phantom{00}0\phantom{1100}}}
+b X_{\substack{0110000\\ \phantom{01}1\phantom{0000}}}))
(J_5)\\
&=&
\langle X_\alpha\,|\, \hite(\alpha)\geq 24\rangle
\bigoplus
\langle
f_{23}, \Lambda^{(22)}
\rangle=K
\end{eqnarray*}
satisfies the condition in Proposition \ref{prop:JtoK}.

Then
\begin{eqnarray*}
\lim_{t\to 0}\Ad(\lambda_2^{-1}(t))
(K)
&=&
\fka_1,\\
\lim_{t\to 0}\exp(t^{-1}
\ad(X_{\substack{0100000\\ \phantom{01}0\phantom{0000}}}
))(K)
&=&
\fka_2.
\end{eqnarray*}
\end{proof}

\appendix

\section{The proof of Proposition \ref{prop:A:Inequality}}

First we treat two fundamental cases.

\begin{lemma}
\label{Case:(n-1)}
If $\mu=(n)$, then
$(z_1,\ldots,z_{n})=(1,1,\ldots,1,0)$ is a solution of $(IE_\mu)$.
\end{lemma}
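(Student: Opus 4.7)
The plan is to verify the three families of constraints in $(IE_\mu)$ directly for the proposed vector. First, I would identify $i(h)$ explicitly in this case: with $\mu = (n)$ we have $l=1$, $\mu_1 = n$, and $\sum_{i>k}\mu_i = 0$, so the formula \eqref{eqn:i(h)} gives $i(h) = n+1-h$ for every $h \in \{1,\dots,n\}$. In particular $\mu_1 = n = i(1)$ corresponds to the constraint $z_{\mu_1} = z_n = 0$, and all other indices satisfy $z_i = 1 > 0$, so the sign conditions are trivially met.

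Next I would verify the main inequalities $z_{i(h)}(h) < z_j(h)$ for $1 \leq h \leq n$ and $j \leq n+1-h$ with $j \neq i(h) = n+1-h$. Since $j < n+1-h$ forces $j+h-1 \leq n-1$, every summand in
\[
z_j(h) = z_j + z_{j+1} + \cdots + z_{j+h-1}
\]
equals $1$, so $z_j(h) = h$. On the other hand,
\[
z_{i(h)}(h) = z_{n+1-h}(h) = z_{n+1-h} + \cdots + z_{n-1} + z_n = (h-1)\cdot 1 + 0 = h-1,
\]
so the required strict inequality $h-1 < h$ holds. This handles every relevant pair $(h,j)$.

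There is essentially no obstacle here; this is the baseline case of the induction/construction that the appendix will build on, and all conditions reduce to the observation that the unique zero entry of $z$ sits at position $n = \mu_1 = i(1)$, which is precisely the rightmost index that appears in every window $z_{i(h)}(h)$ but can be avoided by every competing window $z_j(h)$ with $j < n+1-h$.
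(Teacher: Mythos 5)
Your proof is correct and is essentially the paper's own argument, just written out in full: the paper likewise computes $i(h)=n+1-h$ and observes that $z_{i(h)}(h)$ is the sum of the last $h$ entries (hence $h-1$, containing the unique zero), while every competing window sums to $h$. Nothing further is needed.
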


\begin{proof}
In this case, $i(h)=n+1-h$ and $z_{i(h)}(h)=z_{n+1-h}+z_{n+2-h}+\cdots+z_{n}$.
Hence $(1,1,\ldots,1,0)$ is a solution.
\end{proof}

\begin{lemma}
\label{Case:(n-1)/2,(n-1)/2}
Suppose that $n$ is even and $\mu=(n/2, n/2)$.
Then
$$
\begin{array}{cccccccccc}
z_1 & z_2 & \cdots & z_{(n-2)/2} & z_{n/2} & z_{(n+2)/2} & z_{(n+4)/2} & \cdots & z_{n-1} & z_{n}\\
\hline
2 & 2 & \cdots & 2 & 0 & 3 & 2 & \cdots & 2 & 1
\end{array}
$$
is a solution of $(IE_\mu)$.
\end{lemma}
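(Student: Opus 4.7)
The sign conditions $z_{n/2}=0$ and $z_i>0$ for $i\neq n/2$ are immediate from the tabulated values, so the real work is to verify the strict inequalities $z_{i(h)}(h)<z_j(h)$ for every $1\leq h\leq n$ and every $j\neq i(h)$ with $1\leq j\leq n+1-h$.

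First I would extract $i(h)$ explicitly from \eqref{eqn:i(h)} applied to $\mu_1=\mu_2=n/2$: the case $k=2$ gives $i(h)=n/2+1-h$ for $1\leq h\leq n/2$, and the case $k=1$ gives $i(h)=n+1-h$ for $n/2<h\leq n$. Then a short direct computation shows that $z_{i(h)}(h)=2h-2$ in both ranges. Indeed, for $h\leq n/2$ the window $[n/2+1-h,\,n/2]$ consists of $h-1$ twos followed by the single zero $z_{n/2}$; and for $h>n/2$ the window $[n+1-h,\,n]$ has length $h$ and contains $h-3$ twos together with $z_{n/2}=0$, $z_{n/2+1}=3$, and $z_n=1$, for a total of $2(h-3)+4=2h-2$.

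The remaining step is to show $z_j(h)>2h-2$ for every admissible $j\neq i(h)$, which I would organize by the overlap of the window $[j,\,j+h-1]$ with the distinguished indices $n/2$, $n/2+1$, and $n$. For $h>n/2$, the length of the window forces $j\leq n/2$ and $j+h-1\geq n/2+1$, while $j\neq i(h)$ gives $j+h-1<n$; hence such a window always contains $z_{n/2}=0$ and $z_{n/2+1}=3$ but omits $z_n$, so $z_j(h)=2(h-2)+3=2h-1$. For $h\leq n/2$, three patterns arise: (i) the window lies in $[1,\,n/2-1]$, all entries are $2$, and $z_j(h)=2h$; (ii) the window crosses $n/2$ with $j>n/2+1-h$, which forces both $z_{n/2}=0$ and $z_{n/2+1}=3$ into it and yields $z_j(h)=2(h-2)+3=2h-1$; (iii) the window lies in $[n/2+1,\,n]$, in which case either $z_{n/2+1}=3$ or $z_n=1$ (or both, at the borderline $h=n/2$, $j=n/2+1$) appears in place of a $2$, and the sum is again $\geq 2h-1$. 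In every case $z_j(h)\geq 2h-1>2h-2=z_{i(h)}(h)$.

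The one point that requires care, and which I regard as the main obstacle, is the bookkeeping at the boundary cases $h=n/2$ or $j+h-1=n$: there the window configurations change discretely, and one must confirm that no admissible starting index $j$ has been overlooked and that the compensating appearance of the $3$ at position $n/2+1$ (or of the $1$ at position $n$) really forces the strict inequality. Once the overlap patterns are enumerated exhaustively as above, the remaining arithmetic is elementary and the lemma follows.
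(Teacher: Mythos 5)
Your proof is correct and follows essentially the same route as the paper's, which merely records the two-branch formula for $i(h)$ and the corresponding windows $[n/2+1-h,\,n/2]$ and $[n+1-h,\,n]$ and then asserts the result; you have simply carried out the window-by-window arithmetic explicitly. The only imprecision is in your case (iii): a window contained entirely in $[n/2+2,\,n-1]$ (possible when $h\leq n/2-2$) contains neither the $3$ nor the $1$, contrary to your phrasing, but then all its entries equal $2$ and $z_j(h)=2h>2h-2$, so the conclusion is unaffected.
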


\begin{proof}
In this case, 
$$
i(h)=
\left\{
\begin{array}{ll}
\frac{n}{2}+1-h & (h\leq \frac{n}{2})\\
n+1-h & (h> \frac{n}{2})
\end{array}
\right.,
$$
and
$$
z_{i(h)}(h)=
\left\{
\begin{array}{ll}
\sum_{i=\frac{n}{2}+1-h}^{\frac{n}{2}}z_i & (h\leq \frac{n}{2})\\
\sum_{i=n+1-h}^{n}z_i & (h> \frac{n}{2})
\end{array}
\right..
$$
Hence the values of $z_i$ in the statement is a solution.
\end{proof}

\begin{lemma}
\label{KeyLemma}
For any $\mu\vdash n$, there exists a solution of 
the system $(IE_\mu)$.
\end{lemma}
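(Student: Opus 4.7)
The plan is to prove the Key Lemma by explicit construction, using Lemmas~A.1 and~A.2 as templates: Lemma~A.1 handles $l=1$, Lemma~A.2 handles $\mu=(n/2,n/2)$, and both exhibit a clear pattern of a constant baseline with small local corrections (one ``$0$'' drop and, in the second case, one ``$+1$'' boost and one ``$-1$'' tail). I will generalize this pattern to arbitrary $\mu$.

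First I would extract the geometric structure hidden in $(IE_\mu)$. Introduce the special positions $e_k:=\mu_1+\sum_{i>k}\mu_i$ for $k=1,\dots,l$, so that $e_l=\mu_1$ and $e_1=n$; these partition $[1,n]$ into $l$ consecutive segments. The explicit formula \eqref{eqn:i(h)} shows that for each $h$ in the range $(e_k-\mu_1,\,e_{k-1}-\mu_1]$ the target minimum window $[i(h),\,i(h)+h-1]$ ends precisely at $e_k$. Hence it is enough to build $z$ so that, for every $h$ in that range, the length-$h$ window ending at $e_k$ has strictly smaller sum than every other length-$h$ window.

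The construction I would make is as follows: pick a baseline $c_0$ (say $c_0=l$), set $z_{\mu_1}=0$, install a ``reduction'' $z_{e_k}=c_0-E_k$ at each special position with $k<l$, install a ``boost'' $z_{e_{k+1}+1}=c_0+D_k$ at the start of every non-initial segment of length $\ge 2$, and take $z_j=c_0$ at all other positions. The cumulative deviation $D(j):=\sum_{i<j}z_i - c_0(j-1)$ then becomes a descending staircase whose local minima occur exactly at the positions $e_k+1$, and window-sum comparison reduces to comparing the corresponding drops of $D$. This mirrors exactly what happens in Lemma~A.2, where the entire deviation function is $(0,\dots,0,-2,-1,\dots,-1,-2)$.

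The main obstacle will be verifying strict -- not merely weak -- inequality for all competing windows and all $h$. A naive uniform choice such as $D_k=E_k=1$ already fails: for $\mu=(3,3,2)$ it produces a tie between the length-$3$ windows $[1,3]$ and $[3,5]$. The parameters $E_k,D_k$ must therefore be tuned carefully (I expect the scheme $E_k=l-k$, with small positive adjustments to the $D_k$, to work), and length-$1$ segments -- where boost and reduction collapse to one position -- need to be handled as a separate subcase. In every case the strict-inequality requirement reduces to a finite system of integer inequalities in $c_0,D_k,E_k$, which I would discharge either by induction on $l$ (seeded by Lemmas~A.1 and~A.2) or by taking $c_0$ large compared with $\max_k E_k$ and then adjusting the differences $D_k-E_k$ segment by segment to break the remaining ties.
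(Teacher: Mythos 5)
Your reformulation is sound: passing to the cumulative deviation $D(j)=\sum_{i<j}z_i-c_0(j-1)$ and observing via \eqref{eqn:i(h)2} that the target window for every $h$ in the range $(\,\sum_{i>k}\mu_i,\ \sum_{i\ge k}\mu_i\,]$ ends at $e_k=\mu_1+\sum_{i>k}\mu_i$ is a correct description of what $(IE_\mu)$ asks for. But the proposal stops exactly where the lemma begins. The entire content of the statement is that parameters can be chosen so that \emph{all} the strict window comparisons hold simultaneously, and you do not establish this: you concede that the uniform choice $D_k=E_k=1$ already fails for $\mu=(3,3,2)$, you only ``expect'' $E_k=l-k$ with unspecified ``small positive adjustments'' to work, and you defer the verification to one of two unexecuted fallbacks. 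The second fallback is moreover a dead end as stated: since every inequality in $(IE_\mu)$ compares two windows of the \emph{same} length $h$, the baseline $c_0$ cancels identically from $z_j(h)-z_{i(h)}(h)$, so ``taking $c_0$ large compared with $\max_k E_k$'' affects only the positivity constraints $z_i>0$ and contributes nothing toward breaking ties between windows. The first fallback (induction on $l$) is essentially a different proof that would still have to be written; nothing in the proposal indicates how the inductive step would go. As it stands, existence of a valid tuple $(D_k,E_k)$ is asserted, not proved, and that is a genuine gap rather than a routine omission.

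For comparison, the paper proves the lemma by induction on $n$ with a one-coordinate insertion rather than a global template. If $\mu_1>\mu_2$, it solves $(IE_{\mu'})$ for $\mu'=(\mu_1-1,\mu_2,\dots)$ and prepends $z_1:=\sum_j z'_j$; if $\mu_1=\mu_2=m$, it solves $(IE_{\tilde\mu})$ for $\tilde\mu=(\mu_2,\dots,\mu_l)\vdash n-m$ and inserts $z_{n+1-m}:=\sum_j\tilde z_j$, shifting the remaining coordinates (with an extra small correction $\delta$ at position $2m$ when $n+1<3m$). The point of making the inserted entry equal to the sum of all the others is that any competing window containing the inserted position is automatically beaten by the target window, which never contains it except in degenerate ranges of $h$ that are analyzed separately; all other comparisons collapse to the inductive hypothesis. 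That mechanism is what replaces the parameter tuning you leave open. Your approach could in principle yield a more explicit, non-recursive solution, but only once the finite system in $(D_k,E_k)$ is actually shown to be solvable for every $\mu$.
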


\begin{proof}
We prove the assertion by induction on $n$.
For $n=1$, there is nothing to prove.

Let $\mu=(\mu_1\geq \mu_2\geq \cdots\geq \mu_l)\vdash n$.

\bigskip\noindent
(Case 1: The case $\mu_2<\mu_1$)

Define $\mu'=(\mu'_1\geq \mu'_2\geq \cdots\geq \mu'_l)\vdash n-1$ by
$$
\mu'_i:=
\left\{
\begin{array}{ll}
\mu_i & (i>1)\\
\mu_1-1 & (i=1).
\end{array}
\right.
$$
Let $z'=(z'_1,\ldots,z'_{n-1})$ be a solution of $(IE_{\mu'})$.
Then define $z=(z_1,\ldots,z_{n})$ by
$$
z_i:=
\left\{
\begin{array}{ll}
\sum_{j=1}^{n-1}z'_j & (i=1)\\
z'_{i-1} & (i>1).
\end{array}
\right.
$$
We claim that $z=(z_1,\ldots,z_{n})$ is a solution of 
the system $(IE_\mu)$.

Note that $i(h)=1$ if and only if $h=n$, since $\mu_2<\mu_1$.
For $h<n$, we have $i(h)'=i(h)-1$.
Hence, for $h<n$,
$$
z_{i(h)}(h)=z_{i(h)'+1}(h)
=z'_{i(h)'}(h)< z'_j(h)
=z_{j+1}(h)
$$
for $1\leq j\neq i(h)'$, or equivalently for $2\leq j+1\neq i(h)$.

By the definition of $z_1$,
we see $z_{i(h)}(h)<z_1(h)$ for $h<n$.
Since there is no condition for $h=n$ in $(IE_\mu)$,
we have proved that $z=(z_1,\ldots,z_{n})$ is a solution of 
the system $(IE_\mu)$.

\bigskip\noindent
(Case 2: The case $\mu_2=\mu_1=:m$ and $n+1\geq 3m$.)

Let $\tilde{\mu}:=(\mu_2\geq \mu_3\geq \cdots\geq \mu_l)$.
Then $\tilde{\mu}\vdash n-m$.
In this case, $\tilde{i}(h)=i(h)$ for $h\leq n-m$ by \eqref{eqn:i(h)}.
For $h>n-m$, we have $k=1$ in Lemma \ref{lem:i(h)},
and we have $i(h)=i(h-m)$ and $i(h)+h-1=n$ by \eqref{eqn:i(h)} and
\eqref{eqn:i(h)2}.

Let $\tilde{z}=(\tilde{z}_1,\ldots,\tilde{z}_{n-m})$ be a solution of 
$(IE_{\tilde{\mu}})$.
Define $z=(z_1,\ldots,z_{n})$ by
$$
z_i:=
\left\{
\begin{array}{ll}
\tilde{z}_i & (i\leq n-m)\\
\sum_{j=1}^{n-m}\tilde{z}_j & (i=n+1-m)\\
\tilde{z}_{i-m} & (i>n+1-m).
\end{array}
\right.
$$
Note that $i-m>m=\mu_1$ when $i>n+1-m$, since $n+1\geq 3m$.
Hence, in particular, $\tilde{z}_{i-m}>0$ for $i>n+1-m$.

We claim that $z=(z_1,\ldots,z_{n})$ is a solution of 
the system $(IE_\mu)$.
Note that
\begin{equation}
\label{eqn:n-m}
z_{i(h)}(h)<z_j(h)\quad
\text{if}\quad 
\left\{
\begin{array}{l}
n+1-m\in [j,j+h-1],\quad\text{and}\\
n+1-m\notin [i(h),i(h)+h-1]
\end{array}
\right.
\end{equation}
by the definition of $z_{n+1-m}$.
Note, also, that
$h< n+1-m$ implies 
$k\geq 2$ in Lemma \ref{lem:i(h)},
and hence
$i(h)+h-1\leq n-m$
by \eqref{eqn:i(h)2},
and thus $n+1-m\notin [i(h),i(h)+h-1]$.

Suppose that $h\leq m$. 
Then $i(h)+h-1\leq n-m$, since $h\leq m<n+1-m$.
Hence, if $n+1-m\in [j,j+h-1]$, or equivalently $n-m-h+2\leq j\leq n+1-m$, then
$z_{i(h)}(h)<z_j(h)$ by \eqref{eqn:n-m}.
If $j\leq n+1-m-h$, then, since $j+h-1<n+1-m$, we have
$$
z_j(h)=\tilde{z}_j(h)> \tilde{z}_{i(h)}(h)=z_{i(h)}(h).
$$
If $j>n+1-m$, then
$$
z_j(h)=\tilde{z}_{j-m}(h)> \tilde{z}_{i(h)}(h)=z_{i(h)}(h).
$$

Suppose that $m<h\leq n-m$.
If $n+1-m\notin [j,j+h-1]$, then by $j+h-1\leq n$ we have
$j+h-1<n+1-m$, and hence
$$
z_j(h)=\tilde{z}_j(h)> \tilde{z}_{i(h)}(h)=z_{i(h)}(h).
$$
Suppose that $n+1-m\in [j,j+h-1]$.
If $n+1-m\notin [i(h),i(h)+h-1]$,
then
$z_{i(h)}(h)<z_j(h)$ by \eqref{eqn:n-m}.
If $n+1-m\in [i(h),i(h)+h-1]$, then
$i(h)+h-1=n$ and $h\geq n+1-m$ by the definition of $i(h)$ 
(Lemma \ref{lem:i(h)}).

Suppose that $h\geq n-m+1$.
Then we have $i(h)=i(h-m)$ and $i(h)+h-1=n$.
Hence
\begin{eqnarray*}
z_j(h)-z_{i(h)}(h)
&=&
\sum_{k=j}^{i(h)-1}z_k-\sum_{k=j+h}^{n}z_k\\
&=&
\sum_{k=j}^{i(h)-1}\tilde{z}_k-\sum_{k=j+h}^{n}\tilde{z}_{k-m}\\
&=&
\sum_{k=j}^{i(h)-1}\tilde{z}_k-\sum_{k=j+h-m}^{n-m}\tilde{z}_{k}\\
&=&
\tilde{z}_j(h-m)-\tilde{z}_{i(h-m)}(h-m)>0.
\end{eqnarray*}
In the last equation, note that $i(h)<j+h-m$, 
since $i(h)\leq \mu_1=m$ and $j+h>n+1-m\geq 2m$.

\bigskip\noindent
(Case 3: The case $\mu_2=\mu_1=:m$ and $n+1< 3m$.)

Note that $n\geq 2m$, or equivalently $n+1-m>m$.
Let $\tilde{\mu}:=(\mu_2\geq \mu_3\geq \cdots\geq \mu_l)$.
Then $\tilde{\mu}\vdash n-m$.
Let $\tilde{z}=(\tilde{z}_1,\ldots,\tilde{z}_{n-m})$ be a solution of 
$(IE_{\tilde{\mu}})$.
Take $\delta\in \Q$ so that
$$
0< \delta < \min_{j\neq i(h)}(\tilde{z}_j(h)-\tilde{z}_{i(h)}(h)).
$$
Define $z=(z_1,\ldots,z_{n})$ by
$$
z_i:=
\left\{
\begin{array}{ll}
\tilde{z}_i & (i\leq n-m)\\
\sum_{j=1}^{n-m}\tilde{z}_j & (i=n+1-m)\\
\delta & (i=2m)\\
\tilde{z}_{i-m} & (i>n+1-m,\, i\neq 2m).
\end{array}
\right.
$$
The proof of the claim that
$z=(z_1,\ldots,z_{n})$ is a solution of 
the system $(IE_\mu)$
goes in the same way as in Case 2, except the case when
$h\geq n+1-m$, $j+h\leq 2m$, and $i(h)+h-1=n$.
In this case, we have
\begin{eqnarray*}
&&z_j(h)-z_{i(h)}(h)\\
&=&
z_j+\cdots+z_{i(h)-1}
-(z_{j+h}+\cdots+z_n)\\
&=&
\tilde{z}_j+\cdots+\tilde{z}_{i(h)-1}
-(\tilde{z}_{j+h-m}+\cdots+\tilde{z}_{n-m})
+\tilde{z}_m-\delta\\
&=&
\tilde{z}_j(h-m)-\tilde{z}_{i(h-m)}(h-m)-\delta
+\tilde{z}_m
>0.
\end{eqnarray*}
\end{proof}

\section{The proof of Proposition \ref{prop:JtoK} (Type $B$)}

In this section, let $\fkg:=\fks\fko(2n+1, \C)$ (cf. Example \ref{B-1}).
$$
\fks\fko(2n+1, \C)
=\left\{
\begin{bmatrix}
A & \x & B\\
-{}^t\y & 0 & -{}^t\x \\
C & \y & -A'
\end{bmatrix}
\,|\,
B'=-B,\, C'=-C
\right\}.
$$

Recall that
$$
\Lambda=
\sum_{i=1}^{n} E_{i, i+1} -\sum_{i=n+1}^{2n}E_{i,i+1}
=\sum_{i=1}^n\widetilde{E}_{i,i+1},
$$
and
$$
J=\bigoplus_{k=1}^{n}
\C \Lambda^{2k-1},
$$
where $\widetilde{E}_{i,j}=E_{i,j}-E_{2n+2-j,2n+2-i}\in \fkg$ and
\begin{eqnarray*}
\Lambda^{2k-1}
&=&
\sum_{i\leq n+1-(2k-1)}E_{i,i+2k-1}
+\!\!
\sum_{i=\max\{ n-2k+3, 1\}}^{\min\{ n,2n+1-(2k-1)\}}
\!\! (-1)^{n-i}E_{i,i+2k-1}\\
&&
-\sum_{i=n+1}^{2n+1-(2k-1)}E_{i,i+2k-1}\\
&=&
\sum_{i\leq n+1-(2k-1)}\widetilde{E}_{i,i+2k-1}
+
\sum_{i=n-2k+3}^{n+1-k}(-1)^{n-i}\widetilde{E}_{i,i+2k-1}.
\end{eqnarray*}

First suppose that $n$ is odd.
Let $n=2m+1$.
Put
\begin{equation}
\label{eqn:Bodd}
S:=\sum_{i=1}^{m+1}a_i
\widetilde{E}_{i,n+i}.
\end{equation}
Note that the height of $S$ is $n$.
By a simple computation, for $k\leq m$, 
\begin{eqnarray*}
[S,\Lambda^{2k-1}]
&=&
\sum_{i=1}^{m-2k+2}
(-a_i-a_{i+2k-1})\widetilde{E}_{i,i+n+2k-1}\\
&&
+\sum_{i=m-2k+3}^{m-k+1}(a_{n-i-2k+3}-a_i)
\widetilde{E}_{i,i+n+2k-1}.
\end{eqnarray*}

\begin{proof}[Proof of Proposition \ref{prop:JtoK}, odd $n$ case]
Take, for example, $a_i=i$ in \eqref{eqn:Bodd}. Then
\begin{eqnarray*}
[S,\Lambda^{2k-1}]
&=&\label{eqn:B:odd}
\sum_{i=1}^{m-2k+2}
-(2i+2k-1)\widetilde{E}_{i,i+n+2k-1}\\
&&
+\sum_{i=m-2k+3}^{m-k+1}(n+3-2k-2i)
\widetilde{E}_{i,i+n+2k-1}.
\end{eqnarray*}
By the consideration of height,
we have
$[S, [S, \Lambda^{2k-1}]]=0$ for all $k$
and
$[S, \Lambda^{2k-1}]=0$ for $2k-1\geq n$.
Hence
$$
\lim_{t\to 0}\exp t^{-1}\ad S(\C\Lambda^{2k-1})
=
\left\{
\begin{array}{ll}
\C [S, \Lambda^{2k-1}] & (k\leq m)\\
\C\Lambda^{2k-1} & (k>m),
\end{array}
\right.
$$
and
$K:=\langle 
\Lambda^{(l)}\,|\, n\leq l\leq 2n-1
\rangle$
with
$$
\Lambda^{(l)}
:=\left\{
\begin{array}{ll}
[S, \Lambda^{l-n}] & (\text{$l$: even})\\
\Lambda^l & (\text{$l$: odd, $l\geq n$})
\end{array}
\right.
$$
satisfies
$K=\lim_{t\to 0}\exp t^{-1}\ad S(J)$
and the condition of Proposition \ref{prop:JtoK}.
\end{proof}

Next suppose that $n$ is even.
Let $n=2m$.
Put
\begin{equation}
\label{eqn:Beven}
S:=\sum_{i=1}^{m+1}a_i \widetilde{E}_{i,i+n-1}.
\end{equation}
Note that the height of $S$ is $n-1$.
By a simple computation, for $k\leq m$,
\begin{eqnarray*}
[S,\Lambda^{2k-1}]
&=&\nonumber
(a_1+a_{2k})\widetilde{E}_{1,n+2k-1}\\
&&
-\sum_{i=2}^{m-2k+2}(a_i+a_{i+2k-1}) \widetilde{E}_{i,i+n+2k-2}\\
&&
-\sum_{i=m-2k+3}^{m-k+1}(a_i-a_{n-i-2k+4})\widetilde{E}_{i,i+n+2k-2}.
\end{eqnarray*}

\begin{proof}[Proof of Proposition \ref{prop:JtoK}, even $n$ case]
Take, for example, $a_i=i$ in \eqref{eqn:Beven}. Then
\begin{eqnarray*}
[S,\Lambda^{2k-1}]
&=&\nonumber
(2k+1)\widetilde{E}_{1,n+2k-1}\\
&&\label{eqn:B:even}
-\sum_{i=2}^{m-2k+2}(2i+2k-1) \widetilde{E}_{i,i+n+2k-2}\\
&&
+\sum_{i=m-2k+3}^{m-k+1}(n+4-2k-2i)\widetilde{E}_{i,i+n+2k-2}.
\end{eqnarray*}
By the consideration of height,
we have
$[S, [S, \Lambda^{2k-1}]]=0$ for all $k\geq 2$,
$[S, [S, \Lambda]]\in \C \Lambda^{2n-1}$,
and
$[S, \Lambda^{2k-1}]=0$ for $2k-1\geq n+1$.
Hence
by Lemma \ref{UnipotentDeformation}
\begin{eqnarray*}
&&[S, \Lambda^{2k-1}]\in \lim_{t\to 0}\exp t^{-1}\ad S(J)
\qquad (2k-1<n+1),\\
&&\Lambda^{2k-1}\in \lim_{t\to 0}\exp t^{-1}\ad S(J)\qquad (2k-1\geq n+1),
\end{eqnarray*}
and
$K:=\langle 
\Lambda^{(l)}\,|\, n\leq l\leq 2n-1
\rangle$
with
$$
\Lambda^{(l)}
:=\left\{
\begin{array}{ll}
[S, \Lambda^{l-(n-1)}] & (\text{$l$: even})\\
\Lambda^l & (\text{$l$: odd, $l> n$})
\end{array}
\right.
$$
satisfies
$K=\lim_{t\to 0}\exp t^{-1}\ad S(J)$
and the condition of 
Proposition \ref{prop:JtoK}.
\end{proof}

\section{The proof of Proposition \ref{prop:JtoK} (Type $C$)}

In this section, let $\fkg:=\fks\fkp(2n, \C)$ (cf. Example \ref{C-1}).
$$
\fks\fkp(2n, \C)
=\left\{
\begin{bmatrix}
A & B\\
C & -A'
\end{bmatrix}
\,|\,
B'=B,\, C'=C
\right\}.
$$

Recall that
$\Lambda=
\sum_{i=1}^n E_{i, i+1} -\sum_{i=n+1}^{2n-1}E_{i,i+1}
$,
and
$$
J=\bigoplus_{k=1}^{n}
\C \Lambda^{2k-1},
$$
where 
\begin{eqnarray*}
\Lambda^{2k-1}
&=&
\sum_{i\leq n-(2k-1)}E_{i,i+2k-1}
+
\sum_{i=\max\{ n-(2k-2), 1\}}^{\min\{ n,2n-(2k-1)\}}(-1)^{n-i}E_{i,i+2k-1}\\
&&
-\sum_{i=n+1}^{2n-(2k-1)}E_{i,i+2k-1}.
\end{eqnarray*}

Define an abelian Lie subalgebra
$K$ as follows:
$$
K:=
(\bigoplus_{k=\frac{n+1}{2}}^n
\C \Lambda^{2k-1})
\bigoplus
(\bigoplus_{k=1}^{\frac{n-1}{2}}
\C
\begin{bmatrix}
O & \Lambda_A^{2k-1}\\
O & O
\end{bmatrix})
\quad\text{if $n$ is odd,}
$$
$$
K:=
(\bigoplus_{k=\frac{n}{2}+1}^n
\C \Lambda^{2k-1})
\bigoplus
(\bigoplus_{k=1}^{\frac{n}{2}}
\C
\begin{bmatrix}
O & \Lambda_A^{2k-2}\\
O & O
\end{bmatrix})\quad\text{if $n$ is even,}
$$
where
$\Lambda_A:=\sum_{i=1}^{n-1}E_{i,i+1}$.
Put
$$
S:=
\begin{bmatrix}
O & -\frac{1}{2}I\\
O & O
\end{bmatrix}
\in \fkg
\quad \text{if $n$ is odd, and}
$$
$$
S:=
\begin{bmatrix}
\frac{1}{2}E_{1,n} & -\frac{1}{2}\sum_{i=1}^{n-1}E_{i+1,i}\\
O & -\frac{1}{2}E_{1,n}
\end{bmatrix}
\in \fkg
\quad \text{if $n$ is even.}
$$
Note that the height of $S$ is $n$ for $n$ odd and $n-1$ for $n$ even.

To prove
\begin{equation}
\label{eqn:CtypeJtoK}
K=\lim_{t\to 0}
\exp( t^{-1}\ad S)(J),
\end{equation}
we prepare three lemmas.
The following lemma is clear:

\begin{lemma}
\label{lemma:EZequation}
Suppose that $[A,C]=O$.
Then
$$
\left[
\begin{bmatrix}
A & B\\
O & -A'
\end{bmatrix},
\begin{bmatrix}
C & D\\
O & -C'
\end{bmatrix}
\right]
=
\begin{bmatrix}
O & AD-BC'-CB+DA'\\
O & O
\end{bmatrix}.
$$
\end{lemma}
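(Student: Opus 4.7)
The plan is a direct block-matrix computation. Setting $X:=\begin{bmatrix}A & B\\ O & -A'\end{bmatrix}$ and $Y:=\begin{bmatrix}C & D\\ O & -C'\end{bmatrix}$, I would first compute $XY$ and $YX$ separately via the standard $2\times 2$ block product rule, obtaining
\[
XY=\begin{bmatrix}AC & AD-BC'\\ O & A'C'\end{bmatrix},\qquad YX=\begin{bmatrix}CA & CB-DA'\\ O & C'A'\end{bmatrix}.
\]
Subtracting gives the $(1,1)$-block $AC-CA$, the $(1,2)$-block $AD-BC'-CB+DA'$, and the $(2,2)$-block $A'C'-C'A'$. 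The $(1,1)$-block vanishes immediately by the hypothesis $[A,C]=O$, and the $(1,2)$-block is already in the desired form, so the only thing that requires justification is the vanishing of the $(2,2)$-block.

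For this I would use the product rule $(UV)'=V'U'$ for the anti-transpose defined in \eqref{antitranspose}. This rule follows from the identity $U'=P\,{}^tU\,P$, where $P$ is the $n\times n$ anti-diagonal permutation matrix (so $P^2=I$); indeed $(UV)'=P\,{}^t(UV)\,P=P\,{}^tV\,{}^tU\,P=(P\,{}^tV\,P)(P\,{}^tU\,P)=V'U'$. Applied to $U=A$, $V=C$ this gives $A'C'=(CA)'$ and $C'A'=(AC)'$, so $[A,C]=O$ forces $A'C'-C'A'=(CA-AC)'=O$ and the $(2,2)$-block vanishes as required. There is no genuine obstacle here; the only mildly delicate point is remembering that the anti-transpose reverses order of multiplication just like the ordinary transpose, which is what legitimizes passing the commutator hypothesis from the $A,C$ blocks to the $A',C'$ blocks.
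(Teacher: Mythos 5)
Your computation is correct and is exactly the direct verification the paper has in mind (the paper simply declares this lemma ``clear'' and gives no proof). The one point you rightly single out --- that the anti-transpose of \eqref{antitranspose} reverses products, so $A'C'-C'A'=(CA-AC)'=O$ --- is the only step that needs any justification, and your argument via $U'=P\,{}^tU\,P$ handles it correctly.
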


For $X\in \fks\fkp(2n,\C)$, write
$$
X=
\begin{bmatrix}
X(11) & X(12)\\
X(21) & X(22)
\end{bmatrix}.
$$
Then
\begin{eqnarray}
\label{eqn:Ctype2k-1}
\Lambda^{2k-1}(11)&=&-\Lambda^{2k-1}(22)
=
\sum_{i=1}^{n-(2k-1)}E_{i,i+2k-1}=\Lambda_A^{2k-1},\\
\Lambda^{2k-1}(12)&=&
\sum_{i=\max\{ n-(2k-2),1\}}^{\min\{ n, 2n-(2k-1)\}}(-1)^{n-i}E_{i,i+2k-1-n},\nonumber\\
\Lambda^{2k-1}(21)&=&O.\nonumber
\end{eqnarray}

\begin{lemma}
\label{lemma:S:n:odd}
Suppose that $n$ is odd.
Then
$$
[S, \Lambda^{2k-1}]
=
\begin{bmatrix}
O & \Lambda_A^{2k-1}\\
O & O
\end{bmatrix}.
$$
\end{lemma}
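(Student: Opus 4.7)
The plan is to read off both sides of the bracket in block form and apply the previous lemma (lemma:EZequation) directly, since the special shape of $S$ makes all but one of the four blocks trivial.

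First I would set, in the notation of Lemma \ref{lemma:EZequation},
$A=O$, $B=-\tfrac12 I$, $C=\Lambda^{2k-1}(11)=\Lambda_A^{2k-1}$, and $D=\Lambda^{2k-1}(12)$,
using the block decomposition \eqref{eqn:Ctype2k-1}. The hypothesis $[A,C]=O$ is vacuous because $A=O$. Applying the lemma then gives
\begin{equation*}
[S,\Lambda^{2k-1}]
=\begin{bmatrix} O & AD-BC'-CB+DA' \\ O & O \end{bmatrix}
=\begin{bmatrix} O & -BC'-CB \\ O & O \end{bmatrix}
=\begin{bmatrix} O & \tfrac12\bigl((\Lambda_A^{2k-1})'+\Lambda_A^{2k-1}\bigr) \\ O & O \end{bmatrix},
\end{equation*}
the last equality coming simply from substituting $B=-\tfrac12 I$.

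The only remaining point is to verify that $(\Lambda_A^{2k-1})'=\Lambda_A^{2k-1}$, after which the right-hand side reduces to the desired matrix. Using the explicit form $\Lambda_A^{2k-1}=\sum_{i=1}^{n-(2k-1)}E_{i,i+2k-1}$ as an $n\times n$ matrix together with the antitranspose convention \eqref{antitranspose}, which acts on matrix units by $E_{i,j}'=E_{n+1-j,\,n+1-i}$, one computes
\begin{equation*}
(\Lambda_A^{2k-1})'
=\sum_{i=1}^{n-(2k-1)}E_{n+1-(i+2k-1),\,n+1-i}
=\sum_{j=1}^{n-(2k-1)}E_{j,\,j+2k-1}
=\Lambda_A^{2k-1}
\end{equation*}
after the reindexing $j=n-(i+2k-1)+1$.

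This is a direct calculation with no real obstacle; the main thing to be careful about is keeping the antitranspose straight (as opposed to the ordinary transpose) and confirming that the range of summation is symmetric under $i\mapsto n-(i+2k-1)+1$, which it is because the upper limit $n-(2k-1)$ is exactly what makes the index set invariant under this involution.
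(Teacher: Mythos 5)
Your proof is correct and takes essentially the same route as the paper, whose own proof simply declares the assertion ``clear from Lemma \ref{lemma:EZequation} and \eqref{eqn:Ctype2k-1}''. You have merely made explicit the one point the paper leaves implicit, namely that $\Lambda_A^{2k-1}$ is fixed by the antitranspose, so that $-BC'-CB=\tfrac12\bigl((\Lambda_A^{2k-1})'+\Lambda_A^{2k-1}\bigr)=\Lambda_A^{2k-1}$.
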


\begin{proof}
In this case, $\displaystyle S=
\begin{bmatrix}
O & -\frac{1}{2}I\\
O & O
\end{bmatrix}
$.
Hence the assertion is clear from Lemma \ref{lemma:EZequation} and \eqref{eqn:Ctype2k-1}.
\end{proof}

\begin{lemma}
\label{lemma:S:n:even}
Suppose that $n$ is even.
Then
$$
[S, \Lambda^{2k-1}]
=
\begin{bmatrix}
O & \Lambda_A^{2k-2}\\
O & O
\end{bmatrix}.
$$
\end{lemma}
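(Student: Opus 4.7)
The plan is to reduce everything to the $(1,2)$-block of the block commutator via Lemma \ref{lemma:EZequation}, and then verify equality by matching matrix units.

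First I would check that $S$ is in the block-upper-triangular form required by Lemma \ref{lemma:EZequation}: writing $S(11) = \frac{1}{2}E_{1,n}$, $S(12) = -\frac{1}{2}\sum_{i=1}^{n-1}E_{i+1,i}$, $S(21) = O$, $S(22) = -\frac{1}{2}E_{1,n}$, the identity $(E_{1,n})' = E_{1,n}$ gives $S(22) = -S(11)'$, so $S$ has the shape in the statement of Lemma \ref{lemma:EZequation}. From \eqref{eqn:Ctype2k-1}, $\Lambda^{2k-1}$ has the same shape with $\Lambda^{2k-1}(11) = \Lambda_A^{2k-1}$. A one-line check gives $[E_{1,n},\Lambda_A^{2k-1}] = O$ (since neither $E_{1,n}\cdot E_{i,i+2k-1}$ nor $E_{i,i+2k-1}\cdot E_{1,n}$ can survive in the admissible index range $1\le i\le n-2k+1$), so the hypothesis of Lemma \ref{lemma:EZequation} is satisfied.

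Apply Lemma \ref{lemma:EZequation} with $A = \frac{1}{2}E_{1,n}$, $B = S(12)$, $C = \Lambda_A^{2k-1}$, $D = \Lambda^{2k-1}(12)$. Then $[S,\Lambda^{2k-1}]$ is block-upper-triangular with zero $(11)$-, $(21)$-, $(22)$-blocks, and $(12)$-block equal to $AD - BC' - CB + DA'$. A direct check gives $C' = \Lambda_A^{2k-1}$. Next I would compute the four products term-by-term, assuming $2k-1 \le n-1$ (equivalently $k \le n/2$):
\begin{align*}
AD &= \tfrac{1}{2}E_{1,\,2k-1}, & DA' &= \tfrac{1}{2}E_{n-2k+2,\,n},\\
CB &= -\tfrac{1}{2}\sum_{i=1}^{n-2k+1} E_{i,\,i+2k-2}, & BC' &= -\tfrac{1}{2}\sum_{i=1}^{n-2k+1} E_{i+1,\,i+2k-1}.
\end{align*}
Shifting the index in $BC'$ by $j=i+1$ turns it into $-\tfrac{1}{2}\sum_{j=2}^{n-2k+2} E_{j,\,j+2k-2}$. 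Summing $AD - BC' - CB + DA'$ and grouping by matrix units $E_{j,\,j+2k-2}$: the endpoints $j=1$ and $j=n-2k+2$ each receive contribution $\tfrac{1}{2}+\tfrac{1}{2}=1$ (from $AD$ resp.\ $DA'$ combined with one of the two sums), and the interior indices $2\le j\le n-2k+1$ receive $\tfrac{1}{2}+\tfrac{1}{2}=1$ (from the two sums). The total is $\sum_{j=1}^{n-2k+2}E_{j,\,j+2k-2} = \Lambda_A^{2k-2}$, as claimed.

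Finally I would dispose of the range $k \ge n/2+1$, where $\Lambda^{2k-1}(11) = O$ and $\Lambda_A^{2k-2} = O$. Here $C = C' = O$, so the $(12)$-block of $[S,\Lambda^{2k-1}]$ reduces to $AD + DA' = \tfrac{1}{2}(E_{1,n}D + DE_{1,n})$, and both products vanish because the required index $i=n$ (resp.\ $i=n-2k+2$) falls outside the range $1 \le i \le 2n-2k+1$ of summation in $D$ when $2k > n+1$. Thus $[S,\Lambda^{2k-1}] = O$, matching the right-hand side. The only real obstacle is the careful bookkeeping of the four boundary contributions at $j=1$ and $j=n-2k+2$, which must combine cleanly with the bulk sums to produce $\Lambda_A^{2k-2}$ with coefficient exactly $1$; everything else is block-matrix arithmetic.
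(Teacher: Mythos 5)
Your proof is correct and follows essentially the same route as the paper: apply Lemma \ref{lemma:EZequation} to reduce to the $(1,2)$-block, compute the four products $AD$, $BC'$, $CB$, $DA'$, and observe that the two boundary terms fill in the missing matrix units to give $\Lambda_A^{2k-2}$ with coefficient $1$. The only (immaterial) difference is in the range $2k-1>n$, where the paper dismisses the commutator by a height argument while you verify directly that $AD$ and $DA'$ vanish; both are fine.
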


\begin{proof}
In this case,
$S(11)=-S(22)=\frac{1}{2}E_{1n}$, $S(21)=O$, and
$
S(12)=-\frac{1}{2}\sum_{i=1}^{n-1}E_{i+1,i}.
$
Note that $[S(11),\Lambda^{2k-1}(11)]=O$.
By Lemma \ref{lemma:EZequation},
$[S, \Lambda^{2k-1}](11)=[S, \Lambda^{2k-1}](22)
=[S, \Lambda^{2k-1}](21)=O$, and
\begin{eqnarray*}
[S, \Lambda^{2k-1}](12)
&=&
S(11)\Lambda^{2k-1}(12)-S(12)\Lambda^{2k-1}(11)\\
&&\qquad
-\Lambda^{2k-1}(11)S(12)+\Lambda^{2k-1}(12)S(11).
\end{eqnarray*}
If $2k-1>n$, then we have $[S, \Lambda^{2k-1}]=O$ by the consideration of height, and hence 
the assertion holds.

Suppose that $2k-1<n$, then
$$
\Lambda^{2k-1}(12)=
\sum_{i=n-(2k-2)}^{ n}(-1)^{n-i}E_{i,i+2k-1-n}
$$
\begin{eqnarray*}
S(11)\Lambda^{2k-1}(12)
&=&
\frac{1}{2}E_{1,n}\sum_{i=n-2k+2}^{n}(-1)^{n+i}E_{i,i+2k-1-n}
=\frac{1}{2}E_{1,2k-1},\\
\Lambda^{2k-1}(12)S(11)
&=&
\frac{1}{2}\sum_{i=n-2k+2}^{n}(-1)^{n+i}E_{i, i+2k-1-n}E_{1,n}
=
\frac{1}{2}E_{n-2k+2,n},\\
S(12)\Lambda^{2k-1}(11)
&=&
-\frac{1}{2}(\sum_{j=1}^{n-1}E_{j+1,j})
(\sum_{i=1}^{n-2k+1}E_{i,i+2k-1})\\
&=&
-\frac{1}{2}\sum_{i=1}^{n-2k+1}E_{i+1,i+2k-1}
=
-\frac{1}{2}\sum_{i=2}^{n-2k+2}E_{i,i+2k-2},\\
\Lambda^{2k-1}(11)S(12)
&=&
-\frac{1}{2}(\sum_{i=1}^{n-2k+1}E_{i,i+2k-1})
(\sum_{j=1}^{n-1}E_{j+1,j})\\
&=&
-\frac{1}{2}\sum_{i=1}^{n-2k+1}E_{i,i+2k-2}.
\end{eqnarray*}
Hence
$$
[S, \Lambda^{2k-1}](12)=
\sum_{i=1}^{n-2k+2}E_{i,i+2k-2}
=\Lambda_A^{2k-2}.
$$
\end{proof}

\begin{proof}[Proof of Proposition \ref{prop:JtoK}]
By Lemma \ref{lemma:S:n:odd},
$$
\lim_{t\to 0}
\exp( t^{-1}\ad S)(\C \Lambda^{2k-1})
=
\left\{
\begin{array}{ll}
\C \Lambda^{2k-1} & (2k-1\geq n)\\
\C
\begin{bmatrix}
O & \Lambda_A^{2k-1}\\
O & O
\end{bmatrix}
& (2k-1<n)
\end{array}
\right.
$$
for $n$ odd.
By Lemma \ref{lemma:S:n:even},
$$
\lim_{t\to 0}
\exp( t^{-1}\ad S)(\C \Lambda^{2k-1})
=
\left\{
\begin{array}{ll}
\C \Lambda^{2k-1} & (2k-1> n)\\
\C
\begin{bmatrix}
O & \Lambda_A^{2k-2}\\
O & O
\end{bmatrix}
& (2k-1<n)
\end{array}
\right.
$$
for $n$ even.
Hence 
$$
K=\lim_{t\to 0}
\exp( t^{-1}\ad S)(J).
$$
It is clear that $K$ satisfies the condition in  Proposition \ref{prop:JtoK}.
\end{proof}

\section{The proof of Proposition \ref{prop:JtoK} (Type $D$)}
\label{appendixD}

In this section, let $\fkg:=\fks\fko(2n, \C)$ (cf. Example \ref{D-1}).
$$
\fks\fko(2n, \C)
=\left\{
\begin{bmatrix}
A & B\\
C & -A'
\end{bmatrix}
\,|\,
B'=-B,\, C'=-C
\right\}.
$$

Put
$$
\widetilde{E}_{i,j}=E_{i,j}-E_{2n+1-j,2n+1-i}.
$$
Recall that
\begin{eqnarray*}
\Lambda&=&
\sum_{i=1}^{n-1} E_{i, i+1} -\sum_{i=n+1}^{2n-1}E_{i,i+1}
+E_{n-1, n+1}-E_{n,n+2}\\
&=&
\sum_{i=1}^{n-1}\widetilde{E}_{i,i+1}
+\widetilde{E}_{n-1,n+1},
\end{eqnarray*}
and
$$
J=\C Z\bigoplus\bigoplus_{k=1}^{n-1}
\C \Lambda^{2k-1},
$$
where 
\begin{eqnarray*}
Z&=&E_{1,n}-E_{n+1,2n}-E_{1,n+1}+E_{n,2n}
=
\widetilde{E}_{1,n}-\widetilde{E}_{1,n+1}.
\end{eqnarray*}
We have
\begin{eqnarray*}
\Lambda^{2k-1}
&=&
\sum_{i=1}^{n-(2k-1)}
\widetilde{E}_{i,i+2k-1}+\widetilde{E}_{n-(2k-1),n+1}\\
&&\qquad
+2\sum_{i=1}^{k-1}(-1)^i
\widetilde{E}_{n-(2k-1)+i,n+1+i}.
\end{eqnarray*}
The height of $\Lambda^{2k-1}$ equals $2k-1$, and
that of $Z$ $n-1$.
Note that, when $n$ is even,
$$
\Lambda^{n-1}=
\widetilde{E}_{1,n}+\widetilde{E}_{1,n+1}
+2\sum_{i=1}^{\frac{n}{2}-1}(-1)^i
\widetilde{E}_{1+i,n+1+i}.
$$


Let $1\leq i<n,\, 1<j< 2n,\, i<j,\, i< 2n+1-j$.
Then
the height of $\widetilde{E}_{i,j}$ equals $j-i$ for $j\leq n$
and $j-i-1$ for $j>n$.
Hence, 
$\C \widetilde{E}_{1, n}$ and
$\C \widetilde{E}_{i,i+n}$ $( i<\frac{n+1}{2})$ are all the
root spaces of height $n-1$, and,
for $h\geq n$, 
$\C \widetilde{E}_{i,i+h+1}$ $( i<n-\frac{h}{2})$ are all the
root spaces of height $h$.

First suppose that $n$ is odd.
Put
\begin{equation}
\label{eqn:SDodd}
S:=\sum_{i=1}^{2} a_i
\widetilde{E}_{i,i+n-2}+
\sum_{i=2}^{\frac{n+1}{2}}b_i\widetilde{E}_{i,i+n-1}.
\end{equation}
By a simple computation,
\begin{eqnarray*}
[S, \Lambda]
&=&
(a_1-a_2)\widetilde{E}_{1,n}+ (a_1-b_2)\widetilde{E}_{1, n+1}\\
&&
\quad
-(a_2+b_2+b_3)\widetilde{E}_{2, n+2}
-\sum_{i=3}^{\frac{n-1}{2}}(b_i+b_{i+1})\widetilde{E}_{i, n+i},
\end{eqnarray*}
and for $k\geq 2$
\begin{eqnarray*}
[S, \Lambda^{2k-1}]
&=&
(2a_1-b_{2k})\widetilde{E}_{1, n+2k-1}
-(a_2+b_2+b_{2k+1})\widetilde{E}_{2, n+2k}\\
&&\quad
-\sum_{i=3}^{\frac{n+3-4k}{2}}
(b_i+b_{i+2k-1})\widetilde{E}_{i,i+n+2k-2}\\
&&
\qquad
-\sum_{i=\frac{n+5-4k}{2}}^{\frac{n-2k+1}{2}}
(b_i-b_{n+3-i-2k})\widetilde{E}_{i,i+n+2k-2}.
\end{eqnarray*}

\begin{proof}[Proof of Proposition \ref{prop:JtoK}, odd $n$ case]
Take, for example, $a_1=2, a_2=1, b_2=1$, and
$b_i=-i$ $(i\geq 3)$ in \eqref{eqn:SDodd}.
Then
$$
[S, \Lambda]=
\widetilde{E}_{1, n}+\widetilde{E}_{1, n+1}+\widetilde{E}_{2, n+2}
+\sum_{i=3}^{\frac{n-1}{2}}(2i+1)\widetilde{E}_{i, n+i},
$$
and for $k\geq 2$
\begin{eqnarray*}
[S, \Lambda^{2k-1}]
&=&
(4+2k)\widetilde{E}_{1, n+2k-1}
+(2k-1)\widetilde{E}_{2, n+2k}\\
&&\quad
+\sum_{i=3}^{\frac{n+3-4k}{2}}
(2i+2k-1)\widetilde{E}_{i, i+n+2k-2}\\
&&\qquad
-
\sum_{i=\frac{n+5-4k}{2}}^{\frac{n+1-2k}{2}}
(n+3-2k-2i)\widetilde{E}_{i,i+n+2k-2}.
\end{eqnarray*}
By the consideration of height, we have
$$
[S, \Lambda^{2k-1}]=0 \qquad \text{for $2k-1\geq n$}
$$
and
$$
[S, Z]\in \C \Lambda^{2n-3}.
$$
Hence
$$
\lim_{t\to 0}\exp t^{-1}\ad S(J)
=\langle Z, 
\Lambda^{(l)}\,|\, n-1\leq l\leq 2n-3
\rangle,
$$
where
$$
\Lambda^{(l)}
=\left\{
\begin{array}{ll}
[S, \Lambda^{l-(n-2)}] & (\text{$l$: even})\\
\Lambda^l & (\text{$l$: odd, $l\geq n$}).
\end{array}
\right.
$$
\end{proof}

Next suppose that $n$ is even.
Let
\begin{equation}
\label{eqn:SDeven}
S:= a\widetilde{E}_{1,n}
+\sum_{i=1}^{\frac{n}{2}}b_i \widetilde{E}_{i,i+n}.
\end{equation}

By a simple computation,
\begin{eqnarray*}
[S, \Lambda^{2k-1}]
&=&
-(a+b_1+b_{2k})\widetilde{E}_{1,n+2k}
-\sum_{i=2}^{\frac{n+2-4k}{2}}
(b_i+b_{i+2k-1})
\widetilde{E}_{i,i+n+2k-1}\\
&&
\qquad
-\sum_{i=\frac{n+4-4k}{2}}^{\frac{n-2k}{2}}
(b_{i}
-
b_{n+2-2k-i})
\widetilde{E}_{i,i+n+2k-1}.
\end{eqnarray*}

\begin{proof}[Proof of Proposition \ref{prop:JtoK}, even $n$ case]

Take, for example, $a=0$ and
$b_i=-i$ in \eqref{eqn:SDeven}. Then
\begin{eqnarray*}
[S, \Lambda^{2k-1}]
&=&
(2k+1)\widetilde{E}_{1,n+2k}
+\sum_{i=2}^{\frac{n+2-4k}{2}}
(2i+2k-1)
\widetilde{E}_{i,i+n+2k-1}\\
&&
\qquad
-\sum_{i=\frac{n+4-4k}{2}}^{\frac{n-2k}{2}}
(n+2-2k-2i)
\widetilde{E}_{i,i+n+2k-1}.
\end{eqnarray*}
By the consideration of height, we have
$$
[S, \Lambda^{2k-1}]=0 \qquad \text{for $2k\geq n$}
$$
and
$$
[S, Z]=0.
$$
Hence
$$
\lim_{t\to 0}\exp t^{-1}\ad S(J)
=\langle Z, 
\Lambda^{(l)}\,|\, n-1\leq l\leq 2n-3
\rangle,
$$
where
$$
\Lambda^{(l)}
=\left\{
\begin{array}{ll}
[S, \Lambda^{l-(n-1)}] & (\text{$l$: even})\\
\Lambda^l & (\text{$l$: odd, $l\geq n-1$}).
\end{array}
\right.
$$
\end{proof}



\begin{thebibliography}{ }

\bibitem{Aomoto} K. Aomoto, 
Les \'equations aux diff\'erences lin\'earires et les int\'egrales
des fonctions multiformes,
J. Fac. Sci. Univ. Tokyo, Sect. IA Math. {22} (1975) 271--297.

\bibitem{Bourbaki}
N. Bourbaki,
{\em Groupes et alg\`ebres de Lie}, Chapitares 4, 5 et 6,
Hermann, Paris 1975.

\bibitem{Ranee} R.~K. Brylinski, 
Limits of weight spaces, Lusztig's $q$-analogs, and fibering
of coadjoint orbits,
J. Amer. Math. Soc. {2} (1989) 517--534.

\bibitem{Cellini-Papi} P. Cellini and P. Papi, 
Abelian ideals of Borel subalgebras and affine Weyl groups,
Adv. Math. {187} (2004) 320--361.

\bibitem{Elashvili-Panyushev} A.~G. Elashvili and D. Panyushev, 
A classification of the principal nilpotent pairs in simple Lie
algebras and related problems,
J. London Math. Soc. (2) {63} (2001) 299--318.

\bibitem{Gelfand} I.~M. Gel'fand, 
General theory of hypergeometric functions,
Dokl. Akad. Nauk. SSSR {288} (1986) 14--18;
Soviet Math. Dokl. (English translation),
{33} (1987) 9--13.

\bibitem{Gelfand-Retahk-Serganova} I.~M. Gel'fand, V.~S. Retahk and 
V.~V. Serganova, 
Generalized Airy functions, Schubert cells and Jordan groups,
Dokl. Akad. Nauk. SSSR {298} (1988) 17--21;
Soviet Math. Dokl. (English translation),
{37} (1988) 8--12.

\bibitem{Ginzburg} V. Ginzburg, 
Principal nilpotent pairs in a semisimple Lie algebra I,
Invent. Math. {140} (2000) 511--561.

\bibitem{Kimura-Takano} H. Kimura and K. Takano, 
On confluences of general hypergeometric systems,
Tohoku Math. J. {58} (2006) 1--31.

\bibitem{Kostant1959} B. Kostant, 
The principal three-dimensional subgroup and the Betti numbers
of a complex simple Lie group,
Amer. J. Math. {81} (1959) 973--1032.

\bibitem{Kostant1965} B. Kostant, 
Eigenvalues of a Laplacian and commutative Lie subalgebras,
Topology {3} (1965) 147--159.

\bibitem{Kostant1998} B. Kostant, 
The set of abelian ideals of a Borel subalgebra, Cartan decompositions,
and discrete series representations,
Intern. Math. Res. Notices {5} (1998) 225--252.

\bibitem{Kurtzke} J.~F. Kurtzke, 
Centers of centralizers in reductive algebraic groups,
Comm. Alg. {19} (1991) 3393--3410.

\bibitem{PanyushevIMRN} D. Panyushev,
Nilpotent pairs in semisimple Lie algebras and their characteristics,
Internat. Math. Res. Notices {1} (2000) 1--21.

\bibitem{Panyushev} D. Panyushev, 
The poset of positive roots and its relatives,
J. Algebr. Comb. {23} (2006) 79--101.

\bibitem{Richardson} R.~W. Richardson, 
Commuting varieties of semisimple Lie algebras and
algebraic groups,
Comp. Math. {38} (1979) 311--327.


\bibitem{Yu} R. Yu, 
Centralizers of distinguished nilpotent pairs and related problems,
J. Algebra {252} (2002) 167--194.

\end{thebibliography}
\end{document}